\documentclass[11pt]{amsart}

\title[Topology of the links of cDV singularities]{Topology of the links of cDV singularities of types $cA_n$ for $n>0$ \\ and $cD_n$ for $n>4$}

\author[Ishikawa]{Masaharu Ishikawa}
\address{Department of Mathematics, Hiyoshi Campus, Keio University, 4-1-1 Hiyoshi, Kohoku, Yokohama, Kanagawa, 223-8521, Japan}
\email{ishikawa@keio.jp}

\author[Katanaga]{Atsuko Katanaga}
\address{
Faculty of Education, Iwate University, 3-18-8 Ueda, Morioka, Iwate, 020-8550, Japan}
\email{katanaga@iwate-u.ac.jp}

\usepackage{amsfonts,amsmath,amssymb,amscd}
\usepackage{amsthm}
\usepackage{latexsym}
\usepackage{graphicx}
\usepackage{psfrag}
\usepackage{color}

\usepackage{fancybox, ascmac}
\usepackage{multicol}
\usepackage{ulem} 
\usepackage{mathdots}

\usepackage[colorlinks=true,linkcolor=black,citecolor=black,urlcolor=black]{hyperref}

\theoremstyle{plain}
\newtheorem*{theorem*}{Theorem}
\newtheorem*{lemma*} {Lemma}
\newtheorem*{corollary*} {Corollary}
\newtheorem*{proposition*}{Proposition}
\newtheorem*{conjecture*}{Conjecture}
\newtheorem{theorem}{Theorem}[section]
\newtheorem{lemma}[theorem]{Lemma}

\theoremstyle{remark}

\newtheorem{definition}[theorem]{Definition}
\newtheorem{example}[theorem]{Example}
\newtheorem{remark}[theorem]{Remark}

\newtheorem*{example*}{Example}

\theoremstyle{definition}

\newtheoremstyle{citing}
  {}
  {}
  {\itshape}
  {}
  {\bfseries}
  {.}
  {.5em}
  {\thmnote{#3}}

\theoremstyle{citing}

\newcommand{\N}{\mathbb{N}}
\newcommand{\C}{\mathbb{C}}
\newcommand{\Z}{\mathbb{Z}}
\newcommand{\R}{\mathbb{R}}
\newcommand{\Q}{\mathbb{Q}}
\newcommand{\Sing}{\text{\rm Sing}}
\newcommand{\hd}{\hspace{0.3mm}|\hspace{0.3mm}}
\newcommand{\hnd}{\hspace{-1.7mm}{\not|}\hspace{0.3mm}}

\makeatletter
\@addtoreset{equation}{section}
\def\theequation{\thesection.\arabic{equation}}
\makeatother

\allowdisplaybreaks

\begin{document}

\begin{abstract}
We show that the second integral homology group of the link of an isolated compound Du Val (cDV, for short) singularity of type $cA_n$ is either trivial or a torsion-free abelian group. Consequently, by a result of Smale, it follows that the link is either $S^5$ or a connected sum of finitely many copies of $S^2\times S^3$. 
We also determine the rank of the second integral homology group of the link of a singularity of type $cD_n$ with $n>4$ under the assumption that the singularity is Newton non-degenerate,
using Varchenko's formula.
Furthermore, we focus on the weighted homogeneous case and determine the homology group of the link, including its torsion subgroup,
under the assumption that the singularity is a Thom-Sebastiani sum of singularities of Brieskorn-Pham, cyclic, or chain type.
\end{abstract}

\maketitle

\section{Introduction}


Let $f:\C^k\to\C$ be a polynomial in $k$ variables such that $f(O)=0$, where $O$ is the origin,
and let $(f,O)$ denote the singularity of $f$ at $O$.
For a sufficiently small $\varepsilon>0$, the intersection $K_f=S_\varepsilon^{2k-1}\cap f^{-1}(0)$,
where $S_\varepsilon^{2k-1}$ is the $(2k-1)$-dimensional sphere centered at $O$ of radius $\varepsilon>0$,
is called the {\it link} of $(f,O)$. 
It is known from~\cite{Mil68} that the pair $(D^{2k}_\varepsilon, f^{-1}(0))$, 
where $D_\varepsilon^{2k}$ is the $2k$-dimensional ball bounded by $S_\varepsilon^{2k-1}$, is homeomorphic to the cone over the pair $(S_\varepsilon^{2k-1}, K_f)$. Therefore, the topology of $(f,O)$ is determined by $(S_\varepsilon^{2k-1}, K_f)$.
Note that $K_f$ is a manifold if $(f,O)$ is an isolated singularity.
The topology of links of isolated complex hypersurface singularities is a fundamental problem in singularity theory. Although powerful tools, including Milnor fiber theory, monodromy, and surgery theory, have provided a deep understanding of these links, explicit computations of their integral homology groups remain highly nontrivial, especially for $k\geq 4$.
Only a few explicit results are known; see, for example,~\cite{KN08, Kol05, Kol09}.

Compound Du Val (cDV, for short) singularities form an important class of singularities that naturally arise in the study of algebraic threefolds and play a central role in the birational geometry of threefolds.
In this paper, we focus on cDV singularities as hypersurface singularities in four variables, 
corresponding to the case $k=4$.
They are classified into five types: $cA_n$, $cD_n$, $cE_6$, $cE_7$, and $cE_8$.
We study the topology of isolated cDV singularities of types $cA_n$ for $n>0$ and $cD_n$ for $n>4$.
An isolated cDV singularity of type $cA_n$ is given in the form
\[
   f(x,y,z,w)=x^2+y^2+h(z,w),
\]
where $h(z,w)$ is assumed to have an isolated singularity at the origin.
Let $H_n(M)$ denote the $n$-th integral homology group of a manifold $M$.
Unless otherwise specified, all homology groups in this paper are taken with coefficients in $\Z$.
It follows from the results of Flenner~\cite{Fle81} and Koll\'{a}r~\cite[Proposition~2.2.6]{Kol91} that
the second homology group $H_2(K_f)$ of the link $K_f$ of an isolated cDV singularity of type $cA_n$
has rank $m-1$, where $m$ is the number of local irreducible components of $h(z,w)=0$ at the origin.
In the following theorem, we show, using techniques from low-dimensional topology, that $H_2(K_f)$ is torsion-free.
Combined with Smale's result~\cite{Sma62}, this completely determines the diffeomorphism type of $K_f$. 

\begin{theorem}\label{thm1}
Suppose that $f(x,y,z,w)=x^2+y^2+h(z,w)$ has an isolated cDV singularity of type $cA_n$ at the origin.
Then, 
\[
H_2(K_f)\cong\Z^{m-1},
\]
where
$m$ is the number of local irreducible components of $h(z,w)=0$ at the origin.
In particular, $K_f$ is diffeomorphic to $S^5$ if $m=1$, and otherwise is diffeomorphic
to the connected sum of $m-1$ copies of $S^2\times S^3$.
\end{theorem}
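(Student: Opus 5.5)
We will compute $H_2(K_f)$ from the Milnor fiber and the variation/Seifert form, and reduce it to a computation about the plane curve $h=0$. The link $K_f$ is the boundary of the Milnor fiber $F_f$. $F_f$ is homotopy equivalent to a wedge of $\mu$ three-spheres, so $H_2(K_f)\cong\operatorname{coker}(S:H_3(F_f)\to H_3(F_f,K_f))$. Here $S$ is the intersection form, which for odd-dimensional Milnor fibers equals $V-(-1)^{3}V^{T}$ up to sign, with $V$ the Seifert (variation) matrix. So $H_2(K_f)\cong\operatorname{coker}(V+V^T)$ for our $k=4$.

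Next we use the Thom--Sebastiani theorem of Sakamoto/Deligne. It gives $V_f=\pm V_{x^2+y^2}\otimes V_h=\pm V_h$, since the Seifert form of $x^2+y^2$ is $\pm 1$ on a rank-one lattice. Hence, up to sign, $H_2(K_f)\cong\operatorname{coker}(V_h+V_h^{T})$, where $V_h$ is the Seifert matrix of the fibered link $L_h=S^3\cap h^{-1}(0)$. Now $V_h+V_h^T$ is exactly a presentation matrix of $H_1$ of the double branched cover $\Sigma_2(L_h)$ of $S^3$ along $L_h$. Equivalently, $K_f$ is the double of the $2$-fold cyclic cover construction: the link of $x^2+y^2+h$ is the $2$-fold suspension, and its homology is read off from $\Sigma_2$. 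Thus
\[
H_2(K_f)\cong H_1(\Sigma_2(L_h)).
\]
The rank statement $m-1$ then also comes out as the nullity of $V_h+V_h^T$, matching Flenner--Koll\'ar. So the theorem reduces to showing that $H_1(\Sigma_2(L_h))$ is torsion-free, and \textbf{this is the main step}.

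For this we use low-dimensional topology. $L_h$ is an iterated-torus (algebraic) link with $m$ components. Rather than using a Seifert matrix, I would use that $\Sigma_2(L_h)$ is the link of the surface singularity $t^2=h(z,w)$. This singularity is a double point of type $A$-like surface, and its link is a graph manifold with an explicit plumbing from the embedded resolution of $h$ (Laufer/Hirzebruch--Jung style).

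First attempt: torsion of $H_1(\Sigma_2(L))$ has order $\det$ of the Goeritz/Seifert form restricted off the nullity. I would argue via the multivariable Alexander polynomial, using Torres and Eisenbud--Neumann splice diagrams. The torsion of $H_1$ of the double branched cover is governed by $\Delta_L(-1,\dots,-1)$ on sublinks. The goal is to show that the reduced invariant is $\pm1$, or more concretely that each splice component contributes unimodularly.

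Alternative, likely cleaner: induct on the splice (Eisenbud--Neumann) decomposition of $L_h$. Each Seifert piece of $\Sigma_2(L_h)$ is a double cover of a Seifert piece over a planar base. Mayer--Vietoris then gives $H_1$ as a cokernel computable via edge determinants. Positivity and coprimality of the splice weights (Puiseux pairs) should force torsion to vanish.

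The obstacle is exactly this torsion-freeness. For $m=1$ it reduces to the classical fact that $\Delta_{K}(-1)$ is odd for algebraic knots, and that it is in fact $\pm1$ for the relevant cable structure. I expect the general case to require careful bookkeeping of the parity of cabling coefficients in the splice diagram, including the linking numbers between branches.

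Finally, once $H_2(K_f)\cong\Z^{m-1}$ is known, the diffeomorphism type follows from Smale's classification. $K_f$ is a simply connected closed spin $5$-manifold: simply connected by Milnor, since $k=4$, and spin because it is the boundary of a parallelizable Milnor fiber. By Smale, such a manifold with torsion-free $H_2\cong\Z^{r}$ is $S^5$ if $r=0$ and $\#_r(S^2\times S^3)$ otherwise.
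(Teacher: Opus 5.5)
\textbf{There is a genuine gap, and it is in your first reduction, not in the step you flagged.}

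The intersection form on $H_3$ of the $6$-dimensional Milnor fiber $F_f$ is skew-symmetric. In terms of the Seifert form it is therefore $\pm(V_f-V_f^{T})$; in general it is $\pm(V+(-1)^nV^T)$ in complex fiber dimension $n$. The expression $V-(-1)^3V^T=V+V^T$ you wrote is symmetric, and it is the even-dimensional formula. With $V_f=\pm V_h$ the correct formula gives $H_2(K_f)\cong\operatorname{coker}(V_h-V_h^T)$, not $\operatorname{coker}(V_h+V_h^T)$.

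Consequently the identification $H_2(K_f)\cong H_1(\Sigma_2(L_h))$ is false. $\Sigma_2(L_h)$ is the link of the surface singularity $x^2+h$: one square, monodromy $-T_h$. Adding two squares gives monodromy $(-1)^2T_h=T_h$ instead. Two examples show the failure:
\begin{itemize}
\item For $h=z^2+w^2$ one has $K_f\cong S^2\times S^3$ with $H_2=\Z$. But $\Sigma_2$ of the Hopf link is $\R P^3$ with $H_1=\Z_2$, and $V_h+V_h^T=(\pm2)$ has nullity $0$, not $m-1=1$.
\item For $h=z^2+w^3$, $K_f$ is a homology sphere. But $\Sigma_2$ of the trefoil is $L(3,1)$, and $\Delta(-1)=3\neq\pm1$.
\end{itemize}
So the statement you call the main step, torsion-freeness of $H_1(\Sigma_2(L_h))$, is false. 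No splice-diagram or parity bookkeeping can prove it.

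\textbf{Once the sign is corrected, the hard step disappears.} Up to sign, $V_h-V_h^T$ is the intersection form
$H_1(F_h)\to H_1(F_h,\partial F_h)\cong \mathrm{Hom}(H_1(F_h),\Z)$ of the fiber surface. The exact sequence of the pair $(F_h,\partial F_h)$ then gives $\operatorname{coker}\cong\ker\bigl(H_0(\partial F_h)\to H_0(F_h)\bigr)\cong\Z^{m-1}$. This is torsion-free automatically, since $\partial F_h=L_h$ has $m$ components and $F_h$ is connected.

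This is essentially the paper's argument. Lemma~\ref{lemma41} puts $S-{}^tS$ in the form $O_{m-1}\oplus(\text{unimodular symplectic block})$. Then $I-T_h=S^{-1}(S-{}^tS)$ with $\det S=\pm1$ yields $\operatorname{coker}(I-T_h)\cong\Z^{m-1}$. Sebastiani--Thom, via $T=(-1)\otimes(-1)\otimes T_h=T_h$, transfers this to $H_2(K_f)\cong\operatorname{coker}(I-T)$.

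For $m=1$ the relevant input is $\Delta_K(1)=\pm1$, which holds for every knot; it is not a statement about $\Delta_K(-1)$. Your concluding appeal to Smale, using simple connectivity and spinness of $K_f$, is fine.
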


The main focus of this paper is the study of Newton non-degenerate, isolated cDV singularities of type $cD_n$ with $n>4$.
An isolated cDV singularity of type $cD_n$ is given in the form
\begin{equation}\label{eq1-1}
   f(x,y,z,w)=x^2+y^2z+ayw^\kappa+h(z,w),\\
\end{equation}
where $\kappa\geq 3$, $h(z,w)$ is not identically zero, and the lowest degree of $h(z,w)$ is at least $4$,
see~\cite[Theorem 2.9]{Kol98}. 
A necessary and sufficient condition for this singularity to be isolated is given in Lemma~\ref{lemma21}.
The definition of Newton non-degeneracy is given in Section~\ref{sec21}. 
Note that Newton non-degeneracy is a generic condition.
Under these assumptions, we can determine the zeta function of the Milnor fibration of $(f,O)$ from its Newton boundary and obtain the information on $H_2(K_f)$.

To state the theorem, we fix notation for the coordinates of the vertices of the Newton boundary.
The Newton polygon $\Gamma_+(h)$ of the singularity $(h,O)$, where $h(z,w)=\sum_{i,j}a_{ij}z^iw^j$, is defined by the convex hull of 
$\{(i,j)+\R_{\geq 0}^2\mid a_{ij}\ne 0\}\subset\R^2$,
where $\R_{\geq 0}=\{t\in\R\mid t\geq 0\}$.
The Newton boundary $\Gamma(h)$ is defined by the union of the compact faces of $\Gamma_+(h)$.
Let $\Delta_i$, $i=1,\ldots, N$ be the $1$-dimensional compact faces of $\Gamma_+(h)$,
and $(u_{i-1}, v_{i-1})$ and $(u_i, v_i)$ be its two endpoints, where $v_{i-1}>v_i$. See Figure~\ref{fig5} 
in Section~\ref{section4-1}.
Set $\ell_i=\gcd(u_i-u_{i-1}, v_{i-1}-v_i)$, which is the lattice length of $\Delta_i$.

The Newton polygon $\Gamma_+(f)$ of $(f,O)$ in the form~\eqref{eq1-1}
is the convex hull of $(0,0)\times \Gamma_+(h)+\mathbb R_{\geq 0}^4$,
$(2,0,0,0)+\mathbb R_{\geq 0}^4$, and $(0,2,1,0)+\mathbb R_{\geq 0}^4$, and additionally $(0,1,0,\kappa)+\mathbb R_{\geq 0}^4$ if $a\ne 0$.
The Newton boundary $\Gamma(f)$ is the union of compact faces of $\Gamma_+(f)$.
There exists a vertex $(u_c, v_c)$ of $\Gamma(h)$ such that 
the convex hull of $\Delta_i$ and the two vertices $(2,0,0,0)$ and $(0,2,1,0)$ is a face of $\Gamma(f)$
for each $i\in\{c+1,\ldots,N\}$, see Lemma~\ref{lemma43z}.

As mentioned in~\cite[Remark~4.7]{Cai03}, if $(f,O)$ is Newton non-degenerate,
$\text{\rm rank\,}H_2(K_f)$ can be computed using Varchenko's formula~\cite{Var76} (see Theorem~\ref{thmvar} below). 
Applying this formula, we obtain the following theorem, which explicitly determines $\text{\rm rank\,}H_2(K_f)$ for a Newton non-degenerate isolated cDV singularity $(f,O)$ of type $cD_n$ with $n>4$ from its Newton boundary.
Let $\nu_2(\alpha)$ denote the $2$-adic valuation of an integer $\alpha$, that is, the integer $\beta$ such that $\alpha=2^\beta\gamma$ with $\gamma$ odd.

\begin{theorem}\label{thm2}
Let $(f,O)$ be a Newton non-degenerate, isolated cDV singularity of type $cD_n$ with $n>4$
given in the form~\eqref{eq1-1}.
Then,
\[
\text{\rm rank\,}H_2(K_f)=\sum_{i} \ell_i+ \varepsilon_1+\varepsilon_2,
\]
where
\begin{itemize}
\item 
the summation runs over all indices $i$ with $c+1\leq i\leq N$ satisfying $\nu_2(v_{i-1}-v_i)>\nu_2(\ell_i)$,
\item $\varepsilon_1=\gcd(2\kappa-{v_{c}}, u_{c}+1)$ if $a\ne 0$ and $\nu_2(2\kappa-{v_{c}})>\nu_2(u_{c}+1)$,
and $\varepsilon_1=0$ otherwise,
\item $\varepsilon_2=1$ if $a=0$, $z\hspace{1mm}\hnd h(z,w)$, and
the lowest degree of $h(0,w)$ in the variable $w$ is even, and $\varepsilon_2=0$ otherwise.
\end{itemize}
\end{theorem}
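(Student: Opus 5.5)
The plan is to compute $\text{\rm rank\,}H_2(K_f)$ as the multiplicity of the eigenvalue $1$ of the monodromy $h_*$ on $H_3(F)$, where $F$ is the Milnor fiber. By Milnor's Wang sequence, $H_2(K_f)\otimes\Q\cong\ker(h_*-\mathrm{id})$ on $H_3(F;\Q)$. For a quasi-unipotent monodromy, $\dim\ker(h_*-\mathrm{id})$ counts Jordan blocks for eigenvalue $1$, so some care is needed. Following \cite[Remark~4.7]{Cai03}, I would use that for cDV singularities the generic hyperplane section is Du Val. This should force the eigenvalue-$1$ part of the monodromy to be semisimple, so that the rank equals the multiplicity of $t=1$ as a root of the characteristic polynomial $\Delta(t)$. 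Equivalently, since $\Delta(t)=\prod (t^{m}-1)^{\pm}$ up to a sign convention, it equals the order of vanishing at $t=1$ of the zeta function $\zeta(t)$ given by Varchenko's formula (Theorem~\ref{thmvar}).

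The first step is to describe $\Gamma(f)$ completely, using Lemma~\ref{lemma43z}. The relevant compact faces are of three kinds.
\begin{itemize}
\item For each $i\in\{c+1,\dots,N\}$, the $3$-dimensional face spanned by $\Delta_i$, $(2,0,0,0)$ and $(0,2,1,0)$.
\item If $a\ne0$, faces involving $(0,1,0,\kappa)$ and the vertex $(u_c,v_c)$, possibly together with $\Delta_1,\dots,\Delta_c$.
\item If $a=0$, faces coming from the coordinate subspaces, e.g.\ $\{y=0\}$ and $\{x=y=0\}$.
\end{itemize}
Varchenko's formula writes $\zeta(t)=\prod_{I}\prod_{\Delta}(1-t^{m(\Delta)})^{(-1)^{|I|-1}|I|!\,\mathrm{Vol}_{|I|}(\Gamma_-\cap\R^I)/m(\Delta)}$-type factors, one for each coordinate subspace $\R^I$ and each face of $\Gamma(f)\cap\R^I$.

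The second step is to write the contribution of each face to the multiplicity of the root $1$ via the formula $\sum_\Delta (\text{volume factor})\cdot\gcd$-data. A standard way to organize this is through the Milnor–Orlik style count: a face with primitive normal $(p_x,p_y,p_z,p_w)$ and $m(\Delta)$ contributes eigenvalue-$1$ multiplicity through the roots of unity common to all coordinates. Concretely, for the face over $\Delta_i$ the supporting linear function satisfies $2p_x=m$ and $2p_y+p_z=m$, and $(p_z,p_w)$ is determined by $\Delta_i$. One should get that the contribution from this face and its subfaces (in $\R^{\{x,z,w\}}$, $\R^{\{z,w\}}$, etc.) telescopes to $\ell_i$ exactly when the parity condition $\nu_2(v_{i-1}-v_i)>\nu_2(\ell_i)$ holds, and to $0$ otherwise. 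The parity enters because the weight of $x$ is $m/2$, so eigenvalue $1$ survives only when the relevant $m$ is even in the right $2$-adic sense.

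I expect the main obstacle to be this bookkeeping: summing the alternating volume contributions over all coordinate subspaces and showing that only the stated terms survive at $t=1$. I would simplify it by a join/Thom–Sebastiani-type reduction face by face. Each $3$-face over $\Delta_i$ is essentially the join of $x^2$ with a face of $y^2z+z^iw^j$-type binomials, so its contribution is a product of the eigenvalue sets $\{-1\}$ and those of a chain/cyclic-type two-variable polynomial. The eigenvalue-$1$ count then reduces to counting $\lambda$ with $\lambda$ an eigenvalue of the $(y,z,w)$ part equal to $-1$. For the face through $(0,1,0,\kappa)$ and $(u_c,v_c)$, the analogous join with $x^2$ gives $\varepsilon_1=\gcd(2\kappa-v_c,u_c+1)$ under the analogous $2$-adic condition; here the edge from $(0,2,1,0)$ through $(0,1,0,\kappa)$ to $(0,0,u_c,v_c)$ has the relevant lattice data $(u_c+1,\,2\kappa-v_c)$. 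When $a=0$ and $z\nmid h$, the noncompactness along the $w$-axis direction $y^2z+w^{d}$ yields one extra eigenvalue $1$ exactly when $d$ is even, giving $\varepsilon_2$. Finally, I would check that faces with $i\le c$ contribute nothing, since they do not connect to $(0,2,1,0)$ and their $y$-weight forbids the eigenvalue $-1$ pairing.
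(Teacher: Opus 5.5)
Your framework is the same as the paper's: $H_2(K_f)\cong\mathrm{coker}(I-T)$ by the Wang sequence, the rank is the multiplicity of $(1-t)$ in $\det(I-tT)$, and $\zeta_f$ is computed by Varchenko's formula over the faces of Lemma~\ref{lemma43z}. Your concern about Jordan blocks is legitimate, and the paper uses this step without comment. It is harmless for the following reason. Isolated cDV points are terminal, hence rational. So $p_g=0$ and Hodge symmetry leave only $h^{p,q}$ with $p,q\in\{1,2\}$ on the eigenvalue-one part of $H^3(F)$, which is therefore pure of weight $4$ with $N=0$.

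The genuine gap is that the proposal stops where the proof begins. The contributions $\ell_i$, $\varepsilon_1$, $\varepsilon_2$ and $0$ are asserted (``one should get''), not computed, and the mechanisms you suggest are not the operative ones. Every factor $(1-t^{d})$ contains $(1-t)$ exactly once, so the multiplicity of $(1-t)$ in $\zeta_f$ is just $-\sum_Q\chi(Q)$. The parity of $d$ is therefore irrelevant; indeed $d(Q_i^{(1,2)};f)=2m_i/M_i''$ is always even. The $2$-adic conditions enter only through the exponents, and this is what you would need to compute:
For $i>c$, the faces $\Delta_i^{(1,2)},\Delta_i^{(1)},\Delta_i^{(2)},\Delta_i$ have $\chi=-2M_i'',\,M_i,\,M_i',\,-\ell_i$.
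You then need the arithmetic of Lemma~\ref{lemma041}: $m_i-m_i'=v_{i-1}-v_i$ forces $M_i,M_i'\in\{\ell_i,2\ell_i\}$, with equality iff $(v_{i-1}-v_i)/\ell_i$ is even. Hence $2M_i''+\ell_i-M_i-M_i'$ equals $\ell_i$ or $0$.
Likewise $\varepsilon_1=M'-M$, with $M'=\gcd(2\kappa-v_c,2(u_c+1))\in\{M,2M\}$.

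The vanishing for $i\le c$, and $\varepsilon_2=0$ when $a\ne0$ (which you do not address), come from exact cancellation. Each face through ${\bf u}_3=(0,1,0,\kappa)$ there is a pyramid of lattice height one over a face lying in $\{X_2=0\}$. It has the same $d$ and the opposite $\chi$ ($N_i'=\ell_i$, $N_i''=M_i$). The cancellation is not caused by a $y$-weight ``forbidding'' an eigenvalue.

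Your bookkeeping is also off by one. With the paper's $\zeta_f$ one has $\det(I-tT)=(1-t)\zeta_f(t)$, so the rank is $1+\mathrm{ord}_{t=1}\zeta_f$, not $\mathrm{ord}_{t=1}\zeta_f$. This extra $1$ is cancelled by the vertex $(2,0,0,0)$ with factor $(1-t^2)^{-1}$, which you never list. You also omit the faces through $(0,0,u_N,0)$, whose net factor $(1-t^{u_N})/(1-t^{2u_N})$ must be checked to contribute nothing.

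Finally, the Thom--Sebastiani reduction is valid only for the summand $x^2$. There $T=-T_g$ for $g=y^2z+ayw^\kappa+h$, and $\Gamma(f)$ is the join of $(2,0,0,0)$ with $\Gamma(g)$. So you may instead count the eigenvalue $-1$ of $T_g$, and in that picture the parity of $d$ does matter. But $y^2z+h_{\Delta_i}$ is not a Thom--Sebastiani sum, since $z$ is shared. You would still have to compute $\zeta_g$ on $\Delta_i^{(2)}$, $\Delta_i$, ${\bf v}_c^{(2,3)}$, $\Delta_i^{(3)}$ and the axis vertices, which is exactly the content of Lemmas~\ref{lemma43a}--\ref{lemma43}.
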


Recently, Hertling and Mase proved that Orlik's conjecture holds true if a singularity is a Thom-Sebastiani sum of singularities of Brieskorn-Pham, cyclic, or chain type~\cite{HM22}.
See Definition~\ref{definition5-1} for these terminologies.
If $a=0$ in~\eqref{eq1-1}, then, since $(f,O)$ is weighted homogeneous and isolated, we may assume that $h(z,w)=z^pw^q+w^s$, where $p+q\geq 4$, $q\in \{0,1\}$, and $s\geq 4$. Therefore, $(f,O)$ is in the form of a Thom-Sebastiani sum of Brieskorn-Pham, cyclic, or chain type singularities. There are also cases with $a\ne 0$ for which $(f,O)$ has this form.
In such cases, Orlik's argument in~\cite{Orl72} applies, and hence $H_2(K_f)$ can be determined.

\begin{theorem}\label{thm3}
Let $(f,O)$ be a weighted homogeneous, isolated cDV singularity of type $cD_n$ with $n>4$ 
given in the form~\eqref{eq1-1}. 
Assume that $(f,O)$ is a Thom-Sebastiani sum of singularities of Brieskorn-Pham, cyclic, or chain type.
Then, $H_2(K_f)$ is isomorphic to one of the following groups:
the trivial group, $\Z^m$, $\Z_2^{2m}$, and $\Z\oplus \Z_2^{2m}$, where $m\in\N$.
In particular, $K_f$ is diffeomorphic to $S^5$ if  $H_2(K_f)$ is trivial,
and to the connected sum of $m$ copies of $S^2\times S^3$ if $H_2(K_f)$ is $\Z^m$.
\end{theorem}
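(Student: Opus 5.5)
We prove Theorem~\ref{thm3} by an explicit case analysis. For each weighted homogeneous normal form of $(f,O)$ that is a Thom-Sebastiani sum of Brieskorn-Pham, cyclic, or chain type pieces, we apply Orlik's algorithm~\cite{Orl72}, which Hertling--Mase~\cite{HM22} justify in exactly this setting. For a weighted homogeneous isolated singularity in four variables, the link $K_f$ is a simply connected $5$-manifold with $H_1(K_f)=0$. Its homology is governed by the monodromy $h_*$ on $H_3(F)$ of the Milnor fiber $F$ through the Wang sequence
\[
0\to H_3(K_f)\to H_3(F)\xrightarrow{h_*-1} H_3(F)\to H_2(K_f)\to 0 .
\]
Thus $H_2(K_f)\cong \operatorname{coker}(h_*-1)$. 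Its rank is the multiplicity of the eigenvalue $1$, and Orlik's conjecture, now a theorem in our setting by~\cite{HM22}, computes the torsion from the weights via his explicit algorithm on the data $(w_i)$.

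\emph{Step 1 (normal forms).} We list the possible weighted homogeneous forms~\eqref{eq1-1}. If $a=0$, then, as noted before the theorem, $h=z^pw^q+w^s$ with $q\in\{0,1\}$. The polynomial $f=x^2+(y^2z+z^pw^q+w^s)$ then splits as $x^2\oplus g(y,z,w)$. For $q=0$ the piece $g$ is $y^2z+z^p$ (chain) $\oplus\, w^s$. For $q=1$ the piece $g=y^2z+z^pw+w^s$ is a chain in $(y,z,w)$. If $a\neq0$, weighted homogeneity forces $h$ to consist of monomials of the same weight as $y^2z$, $yw^\kappa$, and $x^2$. We list those $h$ for which $f$ is a Thom-Sebastiani sum of the three admissible types, for instance $y^2z+yw^\kappa+w^s$ as a chain $z\to y\to w$ type, and possibly cyclic pieces such as $y^2z+z^pw+yw^\kappa$. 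In each case we record the weights $w_x=1/2,w_y,w_z,w_w$.

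\emph{Step 2 (rank and torsion).} For each family we compute the characteristic polynomial of $h_*$ from the Milnor--Orlik formula, equivalently from the divisors of the factors multiplied together. Extracting the multiplicity of $(t-1)$ gives the rank $r$. This should agree with Theorem~\ref{thm2}, which serves as a check. We then run Orlik's recursive algorithm: we form the integers $u_i$ and $v_i$ from the weights $w_i=u_i/v_i$, compute the numbers $c_{i_1\cdots i_s}$ and $k_{i_1\cdots i_s}$, and read off the torsion $\bigoplus_j \Z_{d_j}$. Because $w_x=1/2$, only the factor $2$ appears. The key arithmetic claim is that the resulting $d_j$ all lie in $\{1,2\}$, and that the number of $2$'s is even, say $2m$. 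We expect this to follow because the contributions come in pairs coming from the $x^2$ summand joined with a $2$-periodic part of $g$. We then check that the torsion is nontrivial only when $r\le1$. That constraint yields exactly the list: trivial, $\Z^m$, $\Z_2^{2m}$, and $\Z\oplus\Z_2^{2m}$.

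\emph{Step 3 (diffeomorphism type).} When $H_2(K_f)$ is trivial or free, $K_f$ is a simply connected spin $5$-manifold; it is spin since it is the boundary of the parallelizable Milnor fiber. Smale's classification~\cite{Sma62} then gives $S^5$ or $\#_m(S^2\times S^3)$.

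The main obstacle is Step 2's torsion computation. Orlik's algorithm is combinatorially heavy, and the parity and divisibility bookkeeping over the families of Step 1 requires splitting into cases by $\nu_2$ of $p,s,\kappa$. We would first try to simplify it using the Thom-Sebastiani join structure $x^2\oplus g$. Since the $x^2$ factor has monodromy $-1$, we have $\operatorname{coker}(h_*-1)\cong\operatorname{coker}(-h_g-1)$ on $H_2(F_g)$. This reduces the problem to computing $\operatorname{coker}(h_g+1)$ for a curve-type or three-variable singularity $g$, where explicit bases for chain and cyclic types are available.
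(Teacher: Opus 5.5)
Your framework (Wang sequence, $H_2(K_f)\cong\mathrm{coker}(I-T)$, Orlik's conjecture via Hertling--Mase, Smale for the diffeomorphism type) is the right one, but the proposal has a genuine gap. It stops exactly where the content of the theorem begins. Step~2 only announces the expected outcome: the torsion is an elementary abelian $2$-group with an even number of factors, and it occurs only when the rank is at most $1$. These are precisely the assertions to be proved.

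The reason you offer, ``because $w_x=1/2$, only the factor $2$ appears,'' is not valid. Since $T=-T_g$, an eigenvalue of $T_g$ that is a primitive $2p^k$-th root of unity ($p$ odd) produces a factor $\Phi_{p^k}(t)$ of $\det(I-tT)$ with $\Phi_{p^k}(1)=p$. That factor gives $\Z_p$-torsion as soon as its multiplicity exceeds that of $\Phi_1$. Such factors really occur here: for $a=0$ and $(s-q)/\ell$ even, $\det(I-tT)$ contains $(1-t^{ps/\ell})^{\ell}$, hence $\Phi_d^{\ell}$ for every odd prime power $d\mid ps/\ell$. They are harmless only because their multiplicity never exceeds the rank, which is $\ell$ or $\ell+1$.

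Similarly, excluding $\Z_4,\Z_8,\dots$ requires showing that no elementary divisor contains two distinct factors $\Phi_{2^j}$. The paper gets this from the fact that every $d$ with $d\mid 2N$, $d\nmid N$ has $\nu_2(d)=\nu_2(2N)$, applied to $N=ps/\ell$, $(2\kappa p+1)/M$ and $2\kappa p/M$. There is no shortcut here. You have to write down $\det(I-tT)$ for each family, e.g.\ $(1-t)\frac{(1-t^{2ps/\ell})^\ell}{(1-t^{ps/\ell})^\ell}(1-t^s)\frac{1}{1-t^2}$, multiplied by $\frac{1-t^p}{1-t^{2p}}$ when $q=0$. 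Then track the cyclotomic multiplicities case by case, as in Theorem~\ref{thm51}.

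Only the evenness needs no extra work. Once the torsion is known to be a sum of copies of $\Z_2$, Smale's theorem for simply connected spin $5$-manifolds (which you already use in Step~3) forces it to have the form $A\oplus A$. So your pairing heuristic can be dropped.

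Step~1 also needs correcting, since the list of families is the input to everything else. Your example $y^2z+yw^\kappa+w^s$ is neither of chain type nor isolated. Here $w^2\mid h$, so by Lemma~\ref{lemma21}(2) $f$ is singular along the $z$-axis $\{x=y=w=0\}$. For $a\neq 0$, the Thom--Sebastiani hypothesis and isolatedness force $h=bz^pw^q$ with $q\in\{0,1\}$. This gives $x^2$ plus either the chain $w^\kappa y+y^2z+z^p$ or the cyclic $w^\kappa y+y^2z+z^pw$. For $a=0$ one has $x^2+y^2z+z^pw^q+w^s$ with $q\in\{0,1\}$. These four families are what the computation above must be carried out on, yielding $\Z^{\ell+1}$, $\Z^{\ell}$, $\Z\oplus\Z_2^{\ell+q-2}$, $\Z_2^{\ell+q-1}$, $\Z_2^{M-1}$, $\Z_2^{M-2}$ or $\Z^M$, and hence the four shapes in the statement.
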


Consequently, it follows that $H_2(K_f)$ has no torsion except possibly $\Z_2$-factors.
A precise statement of the above theorem will be given in~Theorem~\ref{thm51}.
Note that the second assertion of the above theorem again follows from Smale's result in~\cite{Sma62}.

This paper is organized as follows.
In Section~2, we introduce basic terminologies as Newton non-degeneracy, cDV singularities, and zeta functions of complex hypersurface singularities.
In Section~3, we consider the case of isolated cDV singularities of type $cA_n$.
In Section~4, we focus on cDV singularities of type $cD_n$ with $n>4$.
We first calculate the factors of the zeta functions that correspond to simplices appearing on the Newton boundaries of cDV singularities of type $cD_n$, and then prove Theorem~\ref{thm2} by combining these factors.
In Section~5, we restrict our attention to weighted homogeneous singularities and prove Theorem~\ref{thm3}.

The authors would like to thank Shihoko Ishii, J\'{a}nos Koll\'{a}r,  Osamu Saeki for their valuable suggestions and helpful comments.
The first author is supported by JSPS KAKENHI Grant Numbers JP23K03098 and JP23H00081.
The second author is supported by JSPS KAKENHI Grant Numbers JP25K24558.

\section{Preliminary}

\subsection{Newton non-degeneracy}\label{sec21}

In this subsection, we recall Newton non-degenerate singularities.
Let 
\[
   f(x_1,x_2,\ldots,x_k)=\sum_{(i_1,i_2,\ldots,i_k)\in \Z_{\geq 0}^k}a_{i_1i_2\cdots i_k}x_1^{i_1}x_2^{i_2}\cdots x_k^{i_k}
\]
be a polynomial of $k$ variables such that $f(O)=0$, where $O$ is the origin, $a_{i_1i_2\cdots i_k}\in\C$, and $\Z_{\geq 0}=\{i\in\Z\mid i\geq 0\}$.
Let $(f,O)$ denote the singularity of the polynomial function $f:(\C^k,O)\to(\C,0)$ at $O$.
The {\it Newton polygon} $\Gamma_+(f)$ of $(f,O)$ is defined by the convex hull of 
\[
   \left\{(i_1,i_2,\ldots, i_k)+\R_{\geq 0}^k\mid a_{i_1i_2\cdots i_k}\ne 0\right\}\subset\R^k,
\]
where $\R_{\geq 0}=\{t\in\R\mid t\geq 0\}$.
The {\it Newton boundary} $\Gamma(f)$ of $(f,O)$ is defined by the union of the compact faces of $\Gamma_+(f)$.
A vector $P={}^t(p_1,p_2,\ldots, p_k)$ in $\Z_{\geq 0}^k$ satisfying $\gcd(p_1,p_2,\ldots,p_k)=1$ is called a primitive vector. 
If $p_i>0$ for $i=1,2,\ldots,k$, then it is called a positive primitive vector.
For each primitive vector $P={}^t(p_1,p_2,\ldots, p_k)$,
set 
\[
   d(P;f)=\min\left\{\sum_{i=1}^k p_iX_i\in\R_{\geq 0}\mid (X_1,X_2,\ldots,X_k)\in\Gamma_+(f) \right\}.
\]
Let $\Delta(P;f)$ denote the face of $\Gamma_+(f)$ defined by
\[
   \Delta(P;f)=\left\{(X_1,X_2,\ldots,X_k)\in\Gamma(f) \mid \sum_{i=1}^k p_iX_i=d(P;f)\right\}.
\]
The function
\[
   f_P(x_1,x_2,\ldots,x_k)=\sum_{(i_1,i_2,\ldots,i_k)\in \Delta(P;f)}a_{i_1i_2\cdots i_k}x_1^{i_1}x_2^{i_2}\cdots x_k^{i_k}
\]
is called the {\it face function} of $f$ with respect to $P$.

A singularity $(f,O)$ is said to be {\it Newton non-degenerate} if $f_P:(\C^*)^k\to\C$ has no critical points for any compact face $\Delta(P;f)$ of $\Gamma_+(f)$, where $\C^*=\C\setminus\{0\}$.
Note that if $f$ is a two-variable function, then $(f,O)$ is Newton non-degenerate if and only if, for each $1$-dimensional compact face $\Delta(P;f)$, its face function $f_P$ has no factor with multiplicity greater than $1$.

A singularity $(f,O)$ is said to be {\it weighted homogeneous} if
\[
   f(x_1,x_2,\ldots,x_k)=\sum_{p_1i_1+p_2i_2+\cdots +p_ki_k=d}a_{i_1i_2\cdots i_k}x_1^{i_1}x_2^{i_2}\cdots x_k^{i_k}
\]
for some primitive vector $P={}^t(p_1,p_2,\ldots, p_k)$ and an integer $d$.
In particular, $\Gamma(f)$ is contained in the hyperplane $\{(X_1,X_2,\ldots, X_k)\in\R^k\mid p_1X_1+p_2X_2+\cdots +p_kX_k=d\}$.

\subsection{cDV singularities}

A singularity is called a {\it Du Val} singularity if it is equivalent to one of the following:
\begin{itemize}
\item[$A_n$:\,] $x^2+y^2+z^{n+1}$ with $n\geq 1$\,;
\item[$D_n$:\,] $x^2+y^2z+z^{n-1}$ with $n\geq 4$\,;
\item[$E_6$:\,] $x^2+y^3+z^4$\,;
\item[$E_7$:\,] $x^2+y^3+yz^3$\,;
\item[$E_8$:\,] $x^2+y^3+z^5$.
\end{itemize}
A $3$-dimensional hypersurface singularity $(f,O)$ is called a {\it compound Du Val} singularity
(cDV singularity, for short) if the singularity obtained as the intersection of $(f,O)$ with a generic hyperplane
is a Du Val singularity.
A $3$-dimensional hypersurface singularity over $\C$ is terminal if and only if it is an isolated cDV singularity~\cite{Rei80}.
In the theorem below, $f_d$, $g_d$, and $h_d$ are homogeneous polynomials of degree $d$, and $f_{\geq d}$, $g_{\geq d}$, and $h_{\geq d}$ are polynomials whose lowest degree is at least $d$.

\begin{theorem}[Koll\'{a}r~{\cite[Theorems~2.4, 2.8, 2.9, and 2.10]{Kol98}}]
An isolated cDV singularity is equivalent to one of the following:
\begin{itemize}
\item[$cA_n$:\,] $x^2+y^2+z^2+w^m$ with $m\geq 2$ if $n=1$ and
$x^2+y^2+f_{\geq 3}(z,w)$ if $n>1$.
In the case $n>1$, $n$ is the lowest degree of $ f_{\geq 3}(z,w)$ minus $1$.
\item[$cD_n$:\,] $x^2+f_{\geq 3}(y,z,w)$ with $f_3\ne 0$ if $n=4$
and $x^2+y^2z+ayx^\kappa+h_{\geq d}(z,w)$ if $n>4$,
where $f_3$ is not divisible by the square of a linear form, $a\in \C$, $\kappa\geq 3$, $d\geq 4$,
and $h_d\ne 0$. In the case $n>4$, $n=\min\{2\kappa, d+1\}$ if $a\ne 0$ and $n=d+1$ if $a=0$.
\item[$cE_6$:\,] $x^2+y^3+yg_{\geq 3}(z,w)+h_{\geq 4}(z,w)$, where $h_4\ne 0$.
\item[$cE_7$:\,] $x^2+y^3+yg_{\geq 3}(z,w)+h_{\geq 5}(z,w)$, where $g_3\ne 0$.
\item[$cE_8$:\,] $x^2+y^3+yg_{\geq 4}(z,w)+h_{\geq 5}(z,w)$, where $h_5\ne 0$.
\end{itemize}
\end{theorem}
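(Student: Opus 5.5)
The plan is to classify by the rank of the quadratic part $f_2$ of $f$ and then simplify by formal (or analytic) coordinate changes. Each time we split a variable off, we check the normal form against the requirement that a generic hyperplane section is Du Val.

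\emph{Step 1: $f_2$ and the $cA_n$ cases.} A Du Val singularity has multiplicity $2$, and a generic hyperplane section has the same multiplicity as $f$. Hence $f_2\neq 0$, so $\operatorname{rank} f_2\geq 1$.
\begin{itemize}
\item If $\operatorname{rank} f_2\geq 3$, the splitting lemma (Morse lemma with parameters) gives $f\sim x^2+y^2+z^2+g(w)$. Since the singularity is isolated, $g\not\equiv 0$, so $g\sim w^m$ with $m\geq 2$. This is $cA_1$.
\item If $\operatorname{rank} f_2=2$, the splitting lemma gives $f\sim x^2+y^2+f_{\geq 3}(z,w)$. A generic section is $x^2+y^2+t^{k}$, where $k$ is the order of $f_{\geq3}$, i.e.\ an $A_{k-1}$ singularity. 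This gives $cA_n$ with $n=k-1$.
\item If $\operatorname{rank} f_2=1$, the splitting lemma gives $f\sim x^2+f_{\geq3}(y,z,w)$. A generic section is $x^2+g(s,t)$ with $g$ of order $\geq 3$. Such a section is Du Val only if $f_3\neq 0$, since otherwise its order is $\geq 4$ and it is not in the ADE list.
\end{itemize}

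\emph{Step 2: rank one, sorted by the factorisation of the cubic $f_3$.} A generic plane section of $f_3$ is a binary cubic $c(s,t)$.
\begin{itemize}
\item If $f_3$ is not divisible by the square of a linear form, then $c$ has three distinct roots, and the section $x^2+c$ is $D_4$. This is the $cD_4$ case.
\item If $f_3=\ell_1^2\ell_2$ with $\ell_1,\ell_2$ independent, change coordinates so that $f_3=y^2z$.
\item If $f_3=\ell^3$, set $f_3=y^3$. We then get the $E$ cases: collect the terms of $f$ by powers of $y$ and use the Tschirnhaus substitution $y\mapsto y-(\cdot)$ to kill the $y^2$-coefficient. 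This gives $x^2+y^3+yg(z,w)+h(z,w)$. The generic section is $E_6$, $E_7$ or $E_8$ exactly under the stated conditions on $h_4$, $g_3$, $h_5$; otherwise it is not Du Val.
\end{itemize}

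\emph{Step 3: the $cD_n$ normal form for $f_3=y^2z$.} Write
\[
f=x^2+y^2 z+y^2A+yB(z,w)+C(z,w),
\]
where $A$ has positive order in $(y,z,w)$. The substitution $z\mapsto z-A$ removes the term $y^2A$, absorbing all $y^{\geq2}$-terms. Terms of $yB$ that are divisible by $z$ are removed by $y\mapsto y-\tfrac12 B'$, where $yB=yzB'+yB''(w)$. The cost of this substitution is $O(z\cdot B'^2)$, which is absorbed into $C$ and into further $z$-shifts. Iterating, in the formal topology and then by Artin approximation or finite determinacy, gives
\[
x^2+y^2z+a\,y\,w^\kappa+h(z,w).
\]
We have $\kappa\geq 3$ because $yw^2$ would alter $f_3$. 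We have $\operatorname{ord} h\geq 4$, since degree-$3$ terms of $h$ would change $f_3$; after the $z$-shift those would be $w^3$ or $zw^2$-type terms, and they are excluded by the choice $f_3=y^2z$, or absorbed by it.

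Finally, $n$ is identified by computing the generic section. Restrict to a plane $z=\alpha y+\beta w$, or simply use the weights. Then $x^2+y^2z+ayw^\kappa+h_d$ restricts to a $D_n$ singularity with $n=\min\{2\kappa,d+1\}$ if $a\neq0$ and $n=d+1$ if $a=0$. One checks this by comparing with $D_n$: $x^2+y^2z+z^{n-1}$ and its equivalent form $x^2+y^2z+yz^{(n-1)/2}$.

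\emph{Main obstacle.} I expect Step 3 to be the hardest: showing that the iterative elimination converges, and that the cross terms produced by $y\mapsto y-\tfrac12 B'$ never create new $y$-linear terms of lower $w$-degree than $\kappa$. I would handle this by filtering by a weight in which $y$ has weight $\tfrac{n-2}{2}$ and $z$ has weight $1$. Each substitution then strictly raises the filtration level, and convergence follows from finite determinacy of the isolated singularity.
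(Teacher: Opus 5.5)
The paper does not prove this theorem; it is quoted from Koll\'ar, so there is no internal argument to compare with. Your skeleton is the standard one and is sound: rank of $f_2$, then the factorisation type of $f_3$, then normalisation by coordinate changes plus finite determinacy. However, the step that pins down $n$ in the $cD_n$ case is wrong as written.

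\textbf{The $D_n$ comparison model is wrong.} The curve $y^2z+yz^{j}=yz(y+z^{j-1})$ has three smooth branches with pairwise intersection numbers $1,1,j-1$. So $\delta=j+1$ and $\mu=2\delta-3+1=2j$, i.e.\ it is $D_{2j}$. Hence the alternative form of $D_n$ is $x^2+y^2z+yz^{n/2}$ for even $n$, not $x^2+y^2z+yz^{(n-1)/2}$. For instance, your form with $n=5$ is $x^2+y^2z+yz^2$, whose cubic $yz(y+z)$ has three distinct factors, so it is $D_4$. Comparing with your model would give $n=\min\{2\kappa+1,d+1\}$, which contradicts the statement.

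\textbf{A correct check.} Use the $D_n$ weights $\mathrm{wt}(y)=\frac{n-2}{2(n-1)}$ and $\mathrm{wt}(z)=\frac1{n-1}$. Then $yz^{j}$ has weight $1$ exactly when $j=n/2$, and $z^j$ exactly when $j=n-1$.
\begin{itemize}
\item On the hyperplane $z=\alpha y+\beta w$, set $\tilde z=\alpha y+\beta w$, so $w=(\tilde z-\alpha y)/\beta$.
\item Take $n=\min\{2\kappa,d+1\}$, or $n=d+1$ if $a=0$. Every monomial of the restriction other than $y^2\tilde z$, $a\beta^{-\kappa}y\tilde z^{\kappa}$ and $h_d(1,\beta^{-1})\tilde z^{d}$ has weight $>1$. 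This uses $n>4$, which follows from $\kappa\ge3$ and $d\ge4$.
\item The surviving weight-$1$ terms form the principal part. If $2\kappa\neq d+1$, this is $D_{2\kappa}$ or $D_{d+1}$ for generic $\beta$.
\item In the tie $2\kappa=d+1$, the principal part is $\tilde z\,(y^2+a\beta^{-\kappa}y\tilde z^{\kappa-1}+h_d(1,\beta^{-1})\tilde z^{2\kappa-2})$. It is nondegenerate only if $a^2\beta^{-2\kappa}\neq 4h_d(1,\beta^{-1})$.
\item That inequality holds for generic $\beta$: the left side has degree $2\kappa$ in $\beta^{-1}$, the right side degree at most $2\kappa-1$. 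You need to state this genericity check.
\end{itemize}

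\textbf{Two smaller points.}

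First, your stated main obstacle is misplaced. Since $B'$ depends only on $(z,w)$, the substitution $y\mapsto y-\tfrac12B'$ is exact. It produces only the $(z,w)$-terms $-\tfrac14 zB'^2-\tfrac12B'B''$ and creates no $y$-linear terms at all. The only iteration is the $z$-shift, and the ordinary $\mathfrak m$-adic filtration suffices. You need $A\in\mathfrak m^2$, not merely positive order, and this is forced by $f_3=y^2z$. With $\operatorname{ord}A=m\ge2$, the shift $z\mapsto z-A$ leaves a new remainder $y^2A_{\rm new}$ with $\operatorname{ord}A_{\rm new}\ge m+1$. Finite determinacy then finishes the argument, with no $n$-dependent weight needed; note that $n$ is not yet available at that stage anyway.

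Second, in the $cE$ case $f_{\ge3}$ may contain $y^4,y^5,\dots$. You should first apply Weierstrass preparation in $y$ (order $3$). The resulting unit can be absorbed into $x$, which is allowed because equivalence here is of hypersurface germs. Only then make the Tschirnhaus shift.
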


In this paper, we study cDV singularities of type $cA_n$ with $n\geq 1$ and $cD_n$ with $n>4$.
In the case of type $cD_n$ with $n>4$, we further assume that singularities are Newton non-degenerate. 
This condition is necessary in order to study the topology of the link of the singularity from its Newton boundary.
We exclude the cases $cD_4$ and $cE_n$ ($n=6,7,8$),
since their Newton boundaries may have various shapes, making them more difficult to analyze than the case $cD_n$ with $n>4$.

\subsection{Isolatedness}

We give a necessary and sufficient condition for a Newton non-degenerate cDV singularity of type $cD_n$
to be isolated.
If the term $h(z,w)$ in~\eqref{eq1-1} is identically zero, then $(f,O)$ is non-isolated.
Hereafter, we assume that $h(z,w)$ is not identically zero.

\begin{lemma}\label{lemma21}
Let $(f,O)$ be a cDV singularity given in the form~\eqref{eq1-1}.
Assume that $(f,O)$ is Newton non-degenerate.
\begin{itemize}
\item[(1)] If $a=0$, then $(f,O)$ is isolated if and only if $z\hspace{1mm}\hnd h(z,w)$ and $w^2\hspace{1mm}\hnd h(z,w)$. 
\item[(2)] If $a\ne 0$, then $(f,O)$ is isolated if and only if $w^2\hspace{1mm}\hnd h(z,w)$.
\end{itemize}
\end{lemma}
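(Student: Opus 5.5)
We will compute the critical locus of $f=x^2+y^2z+ayw^\kappa+h(z,w)$ directly and decide when it reduces to the origin near $O$. The partial derivatives are
\[
f_x=2x,\quad f_y=2yz+aw^\kappa,\quad f_z=y^2+h_z,\quad f_w=a\kappa yw^{\kappa-1}+h_w .
\]
From $f_x=0$ we get $x=0$, and $f=0$ together with the other three equations must be solved in $(y,z,w)$. The \emph{only if} directions are easy and we do them first.

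In case (1), if $z\mid h$ then the whole line $\{x=z=w=0\}$ (the $y$-axis) is singular. Indeed, $f_y=2yz=0$, $f_z=y^2+h_z$ and $h_z(0,0)=0$? Not quite, since $h_z(0,w)$ need not vanish. So we use instead the curve $\{x=0,\ z=0,\ y^2=-h_z(0,w)\}$: here $f_w=h_w(0,w)=0$ because $h(0,w)\equiv 0$, $f_y=0$, and $f=0$. This gives a positive-dimensional singular locus through $O$, because $h_z(0,w)$ has order at least $3$ and vanishes at $w=0$. If $w^2\mid h$ (in either case), then the $z$-axis $\{x=y=w=0\}$ is singular: $f_y=aw^\kappa=0$, $f_z=h_z(z,0)=0$, $f_w=h_w(z,0)=0$, and $f=0$.

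For the converse, suppose there is a curve germ $\gamma(t)=(0,y(t),z(t),w(t))$ of singular points through $O$. We split according to which coordinates vanish identically on $\gamma$ and compare leading orders of Puiseux expansions, using Newton non-degeneracy of $h$ (and of $f$) to exclude cancellation.

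\emph{Case $w\equiv 0$.} Then $2yz=0$. If $y\equiv 0$, we need $h_z(z,0)=h_w(z,0)=0$, which means $h(z,0)\equiv 0$ and $\partial_w h(z,0)\equiv0$, i.e.\ $w^2\mid h$, which is excluded. If instead $z\equiv 0$, then $y^2=-h_z(0,0)=0$, a contradiction.

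\emph{Case $z\equiv 0$, $w\not\equiv 0$.} Then $f_y=aw^\kappa$. If $a\ne0$, this is impossible. If $a=0$, we need $h_w(0,w)\equiv0$, i.e.\ $h(0,w)\equiv 0$, i.e.\ $z\mid h$, which is excluded.

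\emph{Case $z,w\not\equiv0$.} Write $y=-aw^\kappa/(2z)$ and substitute. The conditions $f_z=f_w=0$ and $f=0$ become equations in $(z,w)$. Take the weight vector $P$ given by the orders of $(y,z,w)$ along $\gamma$. The leading terms then produce a critical point in $(\C^*)^3$ of the face function $f_P$ restricted to $x=0$. This contradicts Newton non-degeneracy, provided $\Delta(P;f)$ is compact, which holds since all three orders are positive. One also treats $y\equiv0$ here: then $aw^\kappa=0$ forces $a=0$, and $h_z=h_w=0$ with a curve in $(\C^*)^2$ contradicts non-degeneracy of $h_P$.

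The main obstacle is this last step. We must justify that the initial parts of the three equations are exactly the partial derivatives of $f_P$, including the $x$-derivative via $x\equiv 0$, and that weighted-order bookkeeping works even when some monomial of $f$ is absent from $\Delta(P;f)$. The standard approach is to note that a critical curve lying in the torus forces $\partial f_P/\partial x_i$ to vanish at the leading coefficient vector for each $i$. This is the usual curve-selection argument (as in Kouchnirenko's criterion), and we will carry it out carefully.
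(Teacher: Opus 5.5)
Your proposal is correct and takes essentially the same route as the paper. Newton non-degeneracy rules out singular curves in the torus; this uses non-degeneracy of $H=y^2z+ayw^\kappa+h(z,w)$, which $f$ passes on to $H$ once $x$ gets a large weight so that $x^2$ is off the relevant face, and likewise non-degeneracy of $h$. The case analysis of the critical equations on the coordinate hyperplanes then gives $w^2\mid h$, or also $z\mid h$ when $a=0$. Your curve-selection bookkeeping makes explicit what the paper asserts in one line, and you also check the easy direction that the paper leaves to the reader, via the $z$-axis and the curve $z=0$, $y^2=-h_z(0,w)$.
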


\begin{proof}
Assume that $(f,O)$ is non-isolated.
Since $\frac{\partial f}{\partial x}=2x$, 
the singularity of $H(y,z,w)=y^2z+ayw^\kappa+h(z,w)$ at the origin is also non-isolated.
The Newton non-degeneracy of $(H,O)$ implies that
the singular locus $\Sing(H)$ of $H$ must lie on one of the coordinate subspaces $y=0$, $z=0$, or $w=0$.

First we prove (1).
A singular point $(y,z,w)$ of $H$ satisfies $2yz=0$, $y^2+\frac{\partial h}{\partial z}(z,w)=0$ and $\frac{\partial h}{\partial w}(z,w)=0$. 
If $\Sing(H)\subset\{y=0\}$, then $(h,O)$ is non-isolated, otherwise $(H,O)$ is isolated.
By the Newton non-degeneracy of $(h,O)$, either $z^2\hd h(z,w)$ or $w^2\hd h(z,w)$ holds.
Suppose that $\Sing(H)\not\subset\{y=0\}$.
Then, $\Sing(H)\subset\{z=0\}$ by $2yz=0$.
If  $z\hspace{1mm}\hnd h(z,w)$ then $h$ is of the form $h(z,w)=zh_1(z,w)+bw^s$, where $b\ne 0$ and $s\geq 4$.
Then, $\frac{\partial h}{\partial w}=z\frac{\partial h_1}{\partial w}+bsw^{s-1}=0$ implies $w=0$ on $\Sing(H)$.
Therefore, $\Sing(H)$ is given by $z=w=0$. 
Since $(h,O)$ is singular, $\frac{\partial h}{\partial z}=\frac{\partial h}{\partial w}=0$ holds on $\Sing(H)$,
which implies that $y^2+\frac{\partial h}{\partial z}(z,w)\ne 0$ for $y\ne 0$.
This is a contradiction.
In summary, if $(f,O)$ is non-isolated, then either $z\hd h(z,w)$ or $w^2\hd h(z,w)$ holds.
On the other hand, it is easy to verify that if either $z\hd h(z,w)$ or  $w^2\hd h(z,w)$ holds, then $(f,O)$ is non-isolated. Thus, the assertion follows in this case.

Next we prove (2).
A singular point $(y,z,w)$ of $H$ satisfies $2yz+aw^\kappa=0$, $y^2+\frac{\partial h}{\partial z}(z,w)=0$ and 
$a\kappa yw^{\kappa-1}+\frac{\partial h}{\partial w}(z,w)=0$. 
If $\Sing(H)\subset\{y=0\}$, then $w=0$ on $\Sing(H)$.
Since $(h,O)$ is Newton non-degenerate and non-isolated, and $\Sing(H)\not\subset\{z=0\}$, it follows that $w^2\hd h(z,w)$.
Suppose that $\Sing(H)\not\subset\{y=0\}$.
If $\Sing(H)\subset\{z=0\}$, it follows from $2yz+aw^\kappa=0$ that $\Sing(H)$ is given by $z=w=0$.
This implies that $y^2+\frac{\partial h}{\partial z}(z,w)\ne 0$ for $y\ne 0$, which is a contradiction.
If $\Sing(H)\not\subset\{z=0\}$, then $\Sing(H)\subset\{w=0\}$.
However, this contradicts $2yz+aw^\kappa=0$.
In summary, if $(f,O)$ is non-isolated, then $w^2\hd h(z,w)$ holds.
On the other hand, if $w^2\hd h(z,w)$ holds, then the set given by $x=y=w=0$ is contained in the singular locus of $f$, and hence $(f,O)$ is non-isolated. This completes the proof.
\end{proof}

\subsection{Zeta function and Newton polygon}

Let $f:\C^k\to\C$ be a polynomial function of $k$ variables such that $f(O)=0$, where $O$ is the origin, and let $(f,O)$ denote the singularity of $f$ at $O$.
The map $\phi:S^{2k-1}_\varepsilon\setminus K_f\to S^1$ defined by $\phi(x)=f(x)/|f(x)|$ is a locally trivial fibration, called the {\it Milnor fibration}~\cite{Mil68}, where $S^{2k-1}_\varepsilon$ is the $(2k-1)$-dimensional sphere centered at $O$ with sufficiently small radius $\varepsilon$. 
To determine the homology group of the link $K_f=S_\varepsilon^{2k-1}\cap f^{-1}(0)$ of $(f,O)$, we use the short exact sequence
\[   
  0\longrightarrow H_{k-1}(K_f)
  \longrightarrow H_{k-1}(F_f)
  \overset{I-T}\longrightarrow H_{k-1}(F_f)
  \longrightarrow H_{k-2}(K_f)
  \longrightarrow 0,
\]
where $F_f$ is the fiber of the Milnor fibration, $T$ is the monodromy matrix of $H_{k-1}(F_f)$, and $I$ is the identity matrix.
The zeta function $\zeta_f(t)$ of $(f,O)$ is defined by
\[
   \zeta_f(t)=\prod_{i=0}^{k-1} \det(I-tT_i)^{(-1)^{i+1}},
\]
where $T_i$ is the monodromy matrix of $H_i(F_f;\Q)$.
In particular, if $(f,O)$ is an isolated singularity, then $H_i(F_f)=0$ for $i\ne 0, k-1$, and therefore the characteristic polynomial of the monodromy matrix of $H_{k-1}(F_f)$ is given by $(1-t)\zeta_f(t)$.

The zeta function $\zeta_f(t)$ can be obtained from a resolution of the singularity $(f,O)$, using A'Campo's formula as follows.
Let $\pi:Y\to \C^k$ be a resolution of $(f,O)$, where $Y$ is a complex manifold and $(\pi\circ f)^{-1}(0)$ is the union of divisors having only normal crossings. Let $\tilde V$ be the strict transform of $V=f^{-1}(0)$, and $E_1,\ldots,E_\ell$ be the set of the exceptional divisors.
For each $i=1,\ldots,\ell$, let $m(E_i)$ be the multiplicity of $\pi\circ f$ along $E_i$, and set $E^{co}_i=\left(E_i\setminus \left(\tilde V\cup \bigcup_{j\ne i}E_j\right)\right)\cap \pi^{-1}(0)$. 

\begin{theorem}[A'Campo~\cite{AC75} (cf.~\cite{Oka97})]\label{thmac}
\[
\zeta_f(t)=\prod_{i=1}^\ell (1-t^{m(E_i)})^{-\chi(E_i^{co})},
\]
where $\chi(E_i^{co})$ is the Euler characteristic of $E_i^{co}$.
\end{theorem}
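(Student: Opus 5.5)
This result is quoted from~\cite{AC75}, and the paper uses it without proof. If I were to prove it, I would compute $\zeta_f$ on the resolution and localize along $\pi^{-1}(0)$.

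\emph{Step 1: transfer to the resolution.} Since $\pi$ is an isomorphism off $\pi^{-1}(0)\cup\tilde V$, the Milnor fiber $F_f=f^{-1}(\delta)\cap D^{2k}_\varepsilon$, $0<|\delta|\ll\varepsilon$, is diffeomorphic to $(\pi\circ f)^{-1}(\delta)\cap\pi^{-1}(D^{2k}_\varepsilon)$. The same holds for its monodromy. For $\varepsilon$ and $\delta$ small, the latter retracts onto the part lying over a small tubular neighborhood $U$ of the compact divisor $\pi^{-1}(0)$. So $\zeta_f(t)$ equals the zeta function of the monodromy of $g=\pi\circ f$ on $g^{-1}(\delta)\cap U$.

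\emph{Step 2: stratify and use multiplicativity.} Stratify $\pi^{-1}(0)$ into the locally closed pieces $E_i^{co}$ and the remaining strata. The remaining strata are the intersections $E_i\cap E_j\cap\cdots$ of exceptional divisors, and also intersections with $\tilde V$, all lying in $\pi^{-1}(0)$. Over each stratum $S$ the local Milnor fibers of $g$ at points $p\in S$ form a locally trivial family with constant monodromy zeta function $\zeta_{g,p}$. I would then prove the multiplicativity formula
\[
\zeta(t)=\prod_S \zeta_{g,p_S}(t)^{\chi(S)},
\]
for instance by a Mayer--Vietoris argument. This argument uses a cover of $g^{-1}(\delta)\cap U$ by pieces fibering over closed regular neighborhoods of the strata. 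Zeta functions of monodromies are multiplicative on exact sequences of monodromy-equivariant homology, and $\chi$ is additive and multiplicative in fibrations. Equivalently, one can integrate the nearby cycle complex against Euler characteristic.

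\emph{Step 3: local computation.} At $p\in E_i^{co}$ there are local coordinates with $g=u^{m(E_i)}$. The local Milnor fiber is then $m(E_i)$ points permuted cyclically, so $\zeta_{g,p}=(1-t^{m(E_i)})^{-1}$ (with the paper's sign convention). At a point where two or more components meet, in particular including $\tilde V$ of multiplicity one, we have $g=u_1^{a_1}\cdots u_r^{a_r}$ with $r\ge2$. The local Milnor fiber is a disjoint union of copies of $(\C^*)^{r-1}$ times a ball, so its Euler characteristic is $0$. The monodromy is homotopic to a translation in the torus, acting on the $\gcd(a_i)$ components cyclically and trivially on the homology of each component. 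Hence $\zeta_{g,p}=1$. Substituting these into Step 2 yields exactly $\prod_i(1-t^{m(E_i)})^{-\chi(E_i^{co})}$.

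The main obstacle I expect is Step 2. One must make the gluing rigorous and check that it is compatible with the monodromy. Concretely, I would choose a monodromy vector field on $g^{-1}(D_\delta^*)\cap U$ that is tangent to the local product structures near each stratum. Then the global monodromy restricts to the local ones, and Mayer--Vietoris becomes an exact sequence of $\Z[T]$-modules. If this becomes messy, I would fall back on the sheaf-theoretic formulation via $\psi_g\Z$, where multiplicativity is formal.
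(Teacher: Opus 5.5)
The paper does not prove this statement; it only cites A'Campo (and Oka). So the relevant comparison is with A'Campo's original argument, and your outline is a correct proof along a genuinely different route.

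\textbf{A'Campo's route.} He works with Lefschetz numbers of iterates. He constructs a geometric monodromy $h$ on $g^{-1}(\delta)\cap\pi^{-1}(D^{2k}_\varepsilon)$ adapted to the normal crossing structure. Its iterate $h^j$ has fixed points essentially only over the pieces $E_i^{co}$ with $m(E_i)\mid j$, where the fiber is an $m(E_i)$-sheeted cyclic cover on which $h$ acts by deck transformations. The fixed point theorem then gives $\Lambda(h^j)=\sum_{m(E_i)\mid j}m(E_i)\chi(E_i^{co})$, and the product formula follows from $\log\zeta_f(t)=\sum_{j\geq 1}\Lambda(h^j)t^j/j$.

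\textbf{Your route, compared.} You localize the zeta function multiplicatively over a stratification of $\pi^{-1}(0)$ and evaluate normal crossing models. The local input is the same in both arguments: cyclic covers over $E_i^{co}$, and torus factors with vanishing Euler characteristic at intersections. Your $\psi_g$ formulation makes the global step formal. It needs only proper base change plus the alternating-trace identity $\sum_q(-1)^q\,\mathrm{tr}(T^j\mid H^q_c(S,\mathcal L))=\chi_c(S)\,\mathrm{tr}(T^j\mid\mathcal L_p)$ for a local system with an automorphism, and it generalizes directly to nearby cycles with constructible coefficients. A'Campo's route needs no sheaf theory and yields an explicit monodromy diffeomorphism with controlled fixed points. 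The ``main obstacle'' you flag in the Mayer--Vietoris version, building a monodromy compatible with the local product structures, is exactly the construction A'Campo carries out.

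\textbf{Details to fix.}

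\emph{Step 1.} Your claim that ``$\pi$ is an isomorphism off $\pi^{-1}(0)\cup\tilde V$'' is not what you need, and it fails for the toric modifications to which the paper applies the formula. For the $cD_n$ germs of Section~4, comparing the weights of $x^2$ and $y^2z$ shows that the dual Newton diagram subdivides the face $\{p_2=0\}$ of the positive orthant. The resulting divisors map onto the $y$-axis $\{x=z=w=0\}\subset V$. So they are not contained in $\pi^{-1}(0)\cup\tilde V$, yet $\pi$ is not an isomorphism along them. For the same reason $\pi^{-1}(0)$ need not be a divisor, which is why $E_i^{co}$ is intersected with $\pi^{-1}(0)$. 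Step~1 only uses that $\pi$ is biholomorphic over $\C^k\setminus V$, which an embedded resolution (in particular a suitably chosen toric modification) provides. Step~2 only needs $\pi^{-1}(0)$ to be a compact stratified set.

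\emph{Step 2, exponent.} The exponent there is really $\chi_c(S)$. This agrees with $\chi(S)$ for complex constructible strata, so nothing changes.

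\emph{Step 2, missing stratum.} You must exclude points of $\pi^{-1}(0)$ lying on $\tilde V$ alone, since these would contribute $(1-t)^{-\chi}$. At such a point $\pi$ is a local isomorphism, so connectedness of $\pi^{-1}(0)$ would make $\pi$ an isomorphism near $O$ and $V$ smooth at $O$. This is where the singularity of $(f,O)$ is used: for a smooth germ with $\pi=\mathrm{id}$ the formula fails.
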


If $(f,O)$ is Newton non-degenerate, then it admits a resolution with respect to the Newton boundary $\Gamma(f)$, called a {\it toric modification} (cf.~\cite{Oka97}).
Let $(X_1,X_2,\ldots,X_k)$ be the coordinates of the space $\R^k$ on which the Newton polygon $\Gamma_+(f)$ is defined.
Let $I$ be a subset of the index set $\{1,2,\ldots,k\}$, and $X^I$ denote the subspace of $\R^k$ spanned by $(X_i \mid i\in I)$.
For each $I$, let $f^I$ be the restriction of $f$ to the variables $(x_i\mid i\in I)$, and let $S_I$ denote the set of positive primitive vectors $Q$ on $X^I$ such that $\Delta(Q;f^I)$ is a $(|I|-1)$-dimensional compact face of $\Gamma_+(f^I)$, where $|I|$ is the number of elements in $I$.

Let $\mathcal I$ be the set of subsets of $\{1,2,\ldots,k\}$.
The next theorem follows from A'Campo's formula above.
For instance, see~\cite[Theorem (5.3) in Chap. III]{Oka97}.

\begin{theorem}[Varchenko~\cite{Var76}]\label{thmvar}
\[
   \zeta_f(t)=\prod_{I\in \mathcal I} \zeta_I(t), \qquad
   \zeta_I(t)=\prod_{Q\in\mathcal S_I}(1-t^{d(Q;f^I)})^{-\chi(Q)},
\]
where 
\[
\chi(Q)=\frac{(-1)^{|I|-1}|I|!\,\text{\rm Vol}_{|I|}(\text{\rm Cone}(\Delta(Q;f^I),0))}{d(Q;f^I)}.
\]
Here $\text{\rm Vol}_{|I|}(R)$ denotes the Euclidean volume of a closed subset $R\subset X^I$, and $\text{\rm Cone}(\Delta(Q;f^I),0)$ is the cone of $\Delta(Q;f^I)$ with the origin $0\in X^I$.
\end{theorem}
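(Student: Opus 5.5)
The plan is to deduce the formula from A'Campo's formula (Theorem~\ref{thmac}) applied to a toric modification. Let $\Gamma^*(f)$ be the dual Newton diagram, i.e.\ the subdivision of $\R_{\geq 0}^k$ into the cones on which $P\mapsto \Delta(P;f)$ is constant. Choose a regular simplicial subdivision $\Sigma^*$ of $\Gamma^*(f)$ and let $\pi\colon X\to\C^k$ be the associated toric modification. Each vertex $P$ of $\Sigma^*$ gives a toric divisor $E(P)$, and the following facts are standard (cf.~\cite{Oka97}).
\begin{itemize}
\item Since $(f,O)$ is Newton non-degenerate, $\pi$ is a good resolution of $(f,O)$ near $\pi^{-1}(O)$.
\item The divisor $E(P)$ is compact and contained in $\pi^{-1}(O)$ exactly when $P$ is positive.
\item The divisors $E(e_j)$ for the unit vectors $e_j$ are the strict transforms of the coordinate hyperplanes; they are not exceptional.
\item In a toric chart $(u_1,\dots,u_k)$ given by a regular cone containing $P$, we have $\pi^*f=\bigl(\prod_i u_i^{d(P_i;f)}\bigr)\tilde f$. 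Hence $m(E(P))=d(P;f)$.
\end{itemize}

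Next I compute $\chi(E(P)^{co})$ for positive $P$ by stratifying $E(P)$ into torus orbits. An orbit $O_\tau$ corresponds to a cone $\tau\ni P$ of $\Sigma^*$. Every orbit lies in $\pi^{-1}(O)$, and $E(P)^{co}$ must avoid all other exceptional divisors. So the only strata that can contribute are those with $\tau$ spanned by $P$ and some unit vectors $e_j$, $j\notin I$, for a subset $I$.

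On such a stratum, $\tilde f$ restricted to the orbit $(\C^*)^{|I|-1}$ is, up to a monomial unit, the face function of $f^I$ with respect to the projection $Q=P|_I$. Moreover $\tau\in\Sigma^*$ forces $d(Q;f^I)=d(P;f)$. Thus the stratum contributes
\[
\chi\bigl((\C^*)^{|I|-1}\setminus\{f^I_Q=0\}\bigr)=-\chi\bigl(\{f^I_Q=0\}\cap(\C^*)^{|I|-1}\bigr).
\]
If $\dim\Delta(Q;f^I)<|I|-1$, then $f^I_Q$ is invariant under a one-parameter subtorus. That subtorus acts freely on the orbit and on the hypersurface, so the Euler characteristic vanishes. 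This is why only $Q\in\mathcal S_I$ survive.

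If $\Delta(Q;f^I)$ is an $(|I|-1)$-dimensional face, I apply the Bernstein--Khovanskii--Kouchnirenko formula to the non-degenerate hypersurface in the torus. Its Euler characteristic is $(-1)^{|I|-2}(|I|-1)!\,\mathrm{Vol}_{\mathrm{lat}}(\Delta(Q;f^I))$, with the volume normalized by the lattice of the affine hyperplane. I then use
\[
|I|!\,\mathrm{Vol}_{|I|}\bigl(\mathrm{Cone}(\Delta,0)\bigr)=(|I|-1)!\,\mathrm{Vol}_{\mathrm{lat}}(\Delta)\cdot d(Q;f^I),
\]
which holds because the lattice height of the origin over the hyperplane $\langle Q,X\rangle=d$ equals $d$ for primitive $Q$. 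This identifies the contribution with $\chi(Q)$ as defined in the statement, including the sign $(-1)^{|I|-1}$.

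Finally, I regroup the product over positive vertices $P$ and their contributing strata as a product over pairs $(I,Q)$ with $Q\in\mathcal S_I$, each exponent carrying $d(Q;f^I)$. The key point is that each $Q\in\mathcal S_I$ arises from exactly one such stratum. The zero-dimensional faces, and the positive vertices of $\Sigma^*$ introduced only by the regular subdivision, contribute $\chi=0$. This yields $\zeta_f=\prod_I\zeta_I$.

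The step I expect to be the main obstacle is this bookkeeping: showing the bijection between pairs $(I,Q)$ with $Q\in\mathcal S_I$ and the contributing orbits $O_\tau$ with $\tau=\langle P,e_j\,(j\notin I)\rangle$. For this I would use the fact that $\Sigma^*$ can be chosen so that the cones of $\Gamma^*(f)$ meeting the boundary face $\{X_j=0,\ j\notin I\}$ are refined compatibly. I would also use that the face of $\Gamma^*(f)$ dual to a top face of $\Gamma_+(f^I)$ contains a cone of the form $\langle P,e_j\rangle$.
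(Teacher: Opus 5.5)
Your route is the standard derivation the paper has in mind: a regular subdivision $\Sigma^*$ of $\Gamma^*(f)$, A'Campo's formula for the toric modification, the torus-orbit stratification, and the Kouchnirenko--Khovanskii count. The paper gives no proof of its own, only the reference to \cite{Oka97}. Your local computations are correct: $m(E(P))=d(P;f)$, $\tilde f|_{O_\tau}$ is the face function of $f^I$, regularity of $\tau$ makes $P|_I$ primitive, and the height identity gives exactly $\chi(Q)$ with its sign.

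\textbf{The gap.} It lies in the first reduction, where you keep only the compact divisors $E(P)$ with $P$ positive. This is harmless when $f$ is convenient, since then every non-positive $P$ has $d(P;f)=0$. But the paper applies the theorem to non-convenient $f$; for instance $x^2+y^2z+\cdots$ has no pure power of $y$. In that case $\Sigma^*$ has non-positive vertices $P\neq e_j$ with $d(P;f)>0$. Each such $E(P)$ is a non-compact component of $(\pi^*f)^{-1}(0)$. An orbit $O_\tau\subset E(P)$ lies in $\pi^{-1}(O)$ as soon as the generators of $\tau$ sum to a positive vector, and such strata can carry Varchenko factors, which breaks your bijection.

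\textbf{A counterexample.} Take $f=x^2+y^2+yz^b$ with $b\ge 2$, which is non-degenerate and isolated.
\begin{enumerate}
\item The vector $(1,2,0)$ is a vertex of $\Gamma^*(f)$ with $d((1,2,0);f)=2$.
\item The cone of $\Gamma^*(f)$ dual to the vertex $(2,0,0)$ is the cone over the quadrilateral $e_2,(1,2,0),(b,b,1),e_3$. One may triangulate it using the regular cone $\tau=\langle (1,2,0),e_2,e_3\rangle$.
\item In the chart of $\tau$ we have $x=u_1$, $y=u_1^2u_2$, $z=u_3$, so $\pi^*f=u_1^2(1+u_1^2u_2^2+u_2u_3^b)$.
\item Hence the point $O_\tau$ lies in $\pi^{-1}(O)$, meets the total transform only along $E((1,2,0))$, and carries the factor $(1-t^2)^{-1}$ belonging to $I=\{1\}$.
\item Since $\mathrm{Cone}(e_2,e_3)$ lies in a single maximal cone of $\Sigma^*$, no positive vertex sees this stratum. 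Your product therefore loses this factor.
\end{enumerate}

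\textbf{The repair.} It uses only your own tools. Run the orbit analysis over every vertex $P\neq e_j$ of $\Sigma^*$ with $d(P;f)>0$, using the criterion that $\pi(O_\tau)=O$ if and only if the generators of $\tau$ sum to a positive vector. The contributing strata are then $\tau=\langle P,e_j : j\notin I\rangle$ with $P|_I$ positive and $d(e_j;f)=0$.

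For $Q\in\mathcal S_I$, the face $\Delta(Q;f^I)$ is a compact face of $\Gamma_+(f)$, and its dual cone has $\mathrm{Cone}(e_j : j\notin I)$ as a facet. So exactly one maximal cone of $\Sigma^*$ inside that dual cone contains the facet. Regularity forces its remaining generator $P$ to satisfy $P|_I=Q$, which gives the bijection you wanted. Alternatively, star-subdivide at $P+\sum_{p_z=0}e_z$ so that this generator becomes positive.

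\textbf{Two smaller corrections.}
\begin{itemize}
\item Positive vertices added by the subdivision do not contribute $\chi=0$ in general; only their open orbits do, while their boundary strata can carry factors. For $x^2+y^3$, the added vertices $(1,1)$ and $(2,1)$ carry $(1-t^2)^{-1}$ and $(1-t^3)^{-1}$.
\item For $|I|=1$ the stratum is a point with $\chi=1$. This is not $-\chi(\{f^I_Q=0\})$, because $\chi((\C^*)^0)=1$.
\end{itemize}
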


\section{Links of cDV singularities of type $cA_n$}



In this section, we prove Theorem~\ref{thm1}. 
For a matrix $S$, let ${}^tS$ denote the transposition matrix of $S$.
The next lemma is well-known (cf.~\cite{Rol76}). We include a proof for completeness.

\begin{lemma}\label{lemma41}
Let $L$ be a fibered link consisting of $m$ components,
and let $g$ be the genus of the fiber surface $F$ of $L$.
Then, there is a basis $(a_1,a_2,\ldots, a_{2g+m-1})$ of $H_1(F)$ such that the Seifert matrix $S$ satisfies
\[
S-{}^tS=
\begin{pmatrix}
O_{m-1} & O \\ O & R-{}^tR
\end{pmatrix},
\]   
where $O_{m-1}$ is the $(m-1)\times (m-1)$ zero matrix, the other two $O$ are the $(m-1)\times 2g$ and $2g\times (m-1)$ zero matrices, and $R-{}^tR$ is the $2g\times 2g$ block-diagonal matrix with $g$ copies of $\begin{pmatrix} 0 & -1 \\ 1 & 0\end{pmatrix}$ along the diagonal, all other entries being zero.
\end{lemma}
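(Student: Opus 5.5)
The plan is to recognize $S-{}^tS$ as the matrix of the intersection form on $H_1(F)$ and then put that form into a symplectic normal form. For a Seifert surface $F$ with Seifert matrix $S$, defined by linking numbers $S_{ij}=\mathrm{lk}(a_i,a_j^+)$, the antisymmetrization $S-{}^tS$ equals, up to a global sign convention, the matrix of the algebraic intersection pairing $a_i\cdot a_j$ on $H_1(F)$. I will first recall this standard identity, $\mathrm{lk}(a_i,a_j^+)-\mathrm{lk}(a_j,a_i^+)=\pm\, a_i\cdot a_j$, which comes from the difference of the positive and negative push-offs of a curve bounding a thin annulus that meets $F$ along $a_i\cap a_j$. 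This reduces the lemma to a purely intrinsic statement about the intersection form of the compact oriented surface $F$, of genus $g$ with $m$ boundary components. Note that fiberedness is only used to know that $F$ is a connected surface with $H_1(F)\cong\Z^{2g+m-1}$; any connected Seifert surface would do.

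Next I choose the basis geometrically. Write $F$ as a closed genus-$g$ surface $\Sigma_g$ with $m$ open disks removed, and let $\partial_1,\ldots,\partial_m$ be the boundary circles. I take $a_1,\ldots,a_{m-1}$ to be the classes of $\partial_1,\ldots,\partial_{m-1}$. I take $a_m,\ldots,a_{2g+m-1}$ to be standard symplectic curves $\alpha_1,\beta_1,\ldots,\alpha_g,\beta_g$, placed in a subsurface $\Sigma_g\setminus D$ with all removed disks lying inside $D$. Then $H_1(F)\cong H_1(\Sigma_g\setminus D)\oplus\Z\langle \partial_1,\ldots,\partial_{m-1}\rangle$, since $F$ deformation retracts onto the wedge of these $2g+m-1$ loops: $F$ is $\Sigma_g\setminus D$ glued along a circle to a disk with $m-1$ holes (a planar piece), and the relation $\sum \partial_i=\pm\partial D$ eliminates $\partial_m$. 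So these classes form a $\Z$-basis.

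The third step is to compute the intersections. Boundary curves can be pushed into collars disjoint from everything else, so $a_i\cdot a_j=0$ whenever $i\le m-1$. This gives the zero blocks $O_{m-1}$ and the off-diagonal $O$'s. The curves $\alpha_k,\beta_k$ satisfy $\alpha_k\cdot\beta_l=\delta_{kl}$, $\alpha_k\cdot\alpha_l=\beta_k\cdot\beta_l=0$, which yields the block-diagonal $\begin{pmatrix}0&-1\\1&0\end{pmatrix}$ after possibly swapping the orientation of $\beta_k$ to match the sign convention. Here $R$ denotes the lower-right $2g\times 2g$ block of $S$.

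The main obstacle is fixing signs and conventions carefully. This means checking the identity between $S-{}^tS$ and the intersection form with the paper's orientation conventions, and verifying the basis claim for $H_1(F)$ integrally rather than just rationally. I would handle the first by a local model computation at one transverse intersection point, and the second by the explicit deformation retraction above.
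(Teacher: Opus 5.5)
Your proposal is correct and is essentially the paper's proof. The paper uses the same basis, namely the boundary curves $K_1,\ldots,K_{m-1}$ together with a symplectic system. It reads off $S=\begin{pmatrix} P' & Q\\ {}^tQ & R\end{pmatrix}$ directly from linking numbers, and observes that every entry is symmetric except for a $-1$ in each diagonal $2\times 2$ block of $R$. That observation is exactly your identity $S-{}^tS=\pm(a_i\cdot a_j)$ applied to this basis. You also supply the integral check that these curves form a basis of $H_1(F)$, which the paper leaves to its figure.

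One harmless slip: $D$ minus the $m$ removed disks is a disk with $m$ holes, not $m-1$. That is what your relation $\sum_{i=1}^m\partial_i=\pm\partial D$ actually uses, together with $\partial D$ being null-homologous in $\Sigma_g\setminus D$.
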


\begin{proof}
Choose a basis $(a_1,a_2,\ldots, a_{m+2g-1})$ of $H_1(F)$ as shown in Figure~\ref{fig1}.
Let $K_1,\ldots, K_m$ be the simple closed curves that constitute the boundary of $F$, oriented as shown in the figure.
Remark that the surface in Figure~\ref{fig1} is non-trivially embedded in $S^3$. In particular, the simple closed curves 
$a_1,\ldots,a_{m+2g-1}, K_1,\ldots,K_{m-1}$ are, in general, mutually linked in $S^3$.

\begin{figure}[htbp]
\includegraphics[scale=1.0, bb=129 613 559 712]{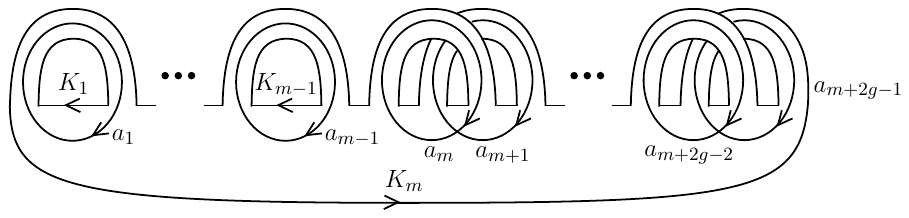}
\caption{A basis on the fiber surface $F$}
\label{fig1}
\end{figure}

Let $lk(c_1,c_2)$ denote the linking number of two disjoint simple closed curves $c_1$ and $c_2$ in $S^3$.
Note that $lk(c_1,c_2)=lk(c_2,c_1)$.
Let $P$ be the linking matrix of $K_1,K_2,\ldots,K_m$, which is a symmetric matrix.
From Figure~\ref{fig1}, we can verify that $S$ is given as
\[
   S=\begin{pmatrix}
   P' & Q \\ {}^tQ & R
   \end{pmatrix},
\]
where $P'$ is the $(m-1)\times (m-1)$ matrix obtained from $P$ by removing the $m$-th low and the $m$-th column, $Q$ is the $(m-1)\times 2g$ matrix whose $(i,j)$-entry is $lk(K_i,a_j)$, and $R$ is the $2g\times 2g$ matrix whose $k$-th block-diagonal is
\[
\begin{pmatrix}
  lk(a_{m+2(k-1)}, a_{m+2(k-1)}) & lk(a_{m+2k-1}, a_{m+2(k-1)})-1 \\
  lk(a_{m+2(k-1)}, a_{m+2k-1}) & lk(a_{m+2k-1}, a_{m+2k-1})
\end{pmatrix},
\]
where $k=1,2,\ldots,g$, and the other entries satisfy $r_{ij}=r_{ji}$, where $r_{ij}$ is the $(i,j)$-entry of $R$.
Since $P'$ is a symmetric matrix, it follows that 
\[
   S-{}^tS
   =\begin{pmatrix}
   P' & Q \\ {}^tQ & R
   \end{pmatrix}
   -\begin{pmatrix}
   P' & Q \\ {}^tQ & R
   \end{pmatrix}
   =\begin{pmatrix}
   O_{m-1} & O \\ O & R-{}^tR
   \end{pmatrix}.
\]
Thus the assertion follows.
\end{proof}

\begin{proof}[Proof of Theorem~\ref{thm1}]
Let $S$ be a Seifert matrix of the fibered link $L=S_\varepsilon^3\cap h^{-1}(0)$ in the $3$-sphere $S_\varepsilon^3$ satisfying the property in Lemma~\ref{lemma41}.
Let $T_h$ be the monodromy matrix of $H_1(F_h)$, where $F_h$ is the fiber surface of $L$, with respect to the basis chosen in Lemma~\ref{lemma41}.
Note that they are related as $T_h=S^{-1}\,{}^tS$.
Then, we have
\[
   I-T_h
   =I-S^{-1}\,{}^tS=S^{-1}(S-{}^tS),
\]
where $I$ is the identity matrix.
Since $\det S=\pm 1$ (cf.~\cite{Mil68, Dur74, Kat74}), we obtain $\text{\rm coker}(I-T_h)\cong \text{\rm coker}(S-{}^tS)$.
Hence, $\text{\rm coker}(I-T_h)\cong\Z^{m-1}$.

Let $(f,O)$ be the singularity in the assertion, $K_f$ be its link, $F_f$ be its Milnor fiber,
$T:H_3(F_f)\to H_3(F_f)$ be the monodromy matrix.
Then, the short exact sequence
\[   
  0\longrightarrow H_3(K_f)
  \longrightarrow H_3(F_f)
  \overset{I-T}\longrightarrow H_3(F_f)
  \longrightarrow H_2(K_f)
  \longrightarrow 0
\]
implies that 
\[
   H_3(K_f)\cong\ker (I-T)\quad\text{and}\quad H_2(K_f)\cong\text{\rm coker}(I-T),
\]
see~\cite{Orl72}. 
By the Join Theorem of Sebastian and Thom~\cite{ST71} (cf.~\cite[Theorem (2.9.1) in Chap.~I]{Oka97}),
$T=(-1)\otimes (-1)\otimes T_h=T_h$ holds for a suitable basis of $H_3(F_f)$.
Thus, we obtain $\text{\rm coker}(I-T)\cong\Z^{m-1}$. 
\end{proof}

\begin{remark}
The proof of Theorem~\ref{thm1} remains valid even if the Milnor fibration of $(h,O)$ is replaced by a fibered link in $S^3$.
For example, there exists a real polynomial map from $\R^4$ to $\R^2$ whose singularity at the origin has a figure-eight knot as its link~\cite{Per81}.
Let us denote this polynomial by $h(z_1,z_2,w_1,w_2)$, where $z_1, z_2, w_1, w_2$ are real variables.
Then the link of the singularity of $f(x,y,z_1, z_2, w_1, w_2)=x^2+y^2+h(z_1,z_2,w_1,w_2)$ at the origin is diffeomorphic to $S^5$, since the figure-eight knot consists of a single simple closed curve.
A topological construction of the join of $x^2+y^2$ and a fibered link in $S^3$ can be found in~\cite{Kau74}.
\end{remark}

\section{Links of cDV singularities of type $cD_n$}

\subsection{Data from the Newton boundary of $h(z,w)$}\label{section4-1}

Let
\[
   P_0=\begin{pmatrix} p_0 \\ q_0 \end{pmatrix}, 
   P_1=\begin{pmatrix} p_1 \\ q_1 \end{pmatrix}, 
\ldots,
   P_N=\begin{pmatrix} p_N \\ q_N \end{pmatrix}
\]
be the positive primitive vectors of the Newton boundary $\Gamma(h)$,
where the indices are given so that $p_{i-1}q_i-p_iq_{i-1}>0$ for $i=1,2,\ldots,N$. See Figure~\ref{fig5}.

\begin{figure}[htbp]
\includegraphics[scale=0.7, bb=155 509 410 710]{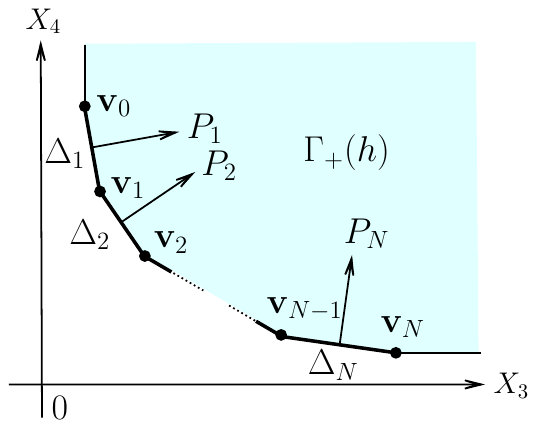}
\caption{The Newton diagram}
\label{fig5}
\end{figure}

Let ${\bf v}_{i-1}=(u_{i-1}, v_{i-1})$ and ${\bf v}_i=(u_i, v_i)$ be the two endpoints of $\Delta(P_i;h)$, where the indices are given so that $u_{i-1}<u_i$ for $i=1,2,\ldots,N$.
In particular, ${\bf v}_i=\Delta(P_i;h)\cap \Delta(P_{i+1};h)$.
Remark that $v_{i-1}>v_i$ holds.

The following data will be used to compute the zeta function of $f$:
\begin{align*}
   \ell_i&=\gcd(u_{i}-u_{i-1},\, v_{i-1}-v_{i}), \\
   m_i&=\begin{vmatrix} u_{i} & u_{i-1} \\ v_{i} & v_{i-1} \end{vmatrix}=u_{i}v_{i-1}-u_{i-1}v_{i}, \\
   m'_i&=\begin{vmatrix} u_{i}-1 & u_{i-1}-1 \\ v_{i} & v_{i-1} \end{vmatrix}=(u_{i}-1)v_{i-1}-(u_{i-1}-1)v_{i}, \\
   M_i&=\gcd(m_i, 2\ell_i), \\
   M_i'&=\gcd(m_i', 2\ell_i), \\
   M_i''&=\gcd(m_i, m_i', 2\ell_i), \\
   n'_i&=\begin{vmatrix} u_{i} & u_{i-1} \\ v_{i}-\kappa & v_{i-1}-\kappa \end{vmatrix}=u_{i}(v_{i-1}-\kappa)-u_{i-1}(v_{i}-\kappa), \\
   N_i'&=\gcd(n_i', \ell_i), \\
   N_i''&=\gcd(m_i, 2n_i', 2\ell_i).
\end{align*}
Note that $\ell_i$ is the lattice length of $\Delta(P_i;h)$.

\begin{lemma}\label{lemma041}
\begin{itemize}
\item[(1)] If $(v_{i-1}-v_i)/\ell_i$ is odd, then either $(M_i, M_i')=(\ell_i, 2\ell_i)$ or $(2\ell_i, \ell_i)$.
\item[(2)] If $(v_{i-1}-v_i)/\ell_i$ is even, then either $(M_i, M_i')=(\ell_i, \ell_i)$ or $(2\ell_i, 2\ell_i)$.
\end{itemize}
\end{lemma}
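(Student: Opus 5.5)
Write $\ell=\ell_i$, $a=(u_i-u_{i-1})/\ell$ and $b=(v_{i-1}-v_i)/\ell$, so that $\gcd(a,b)=1$. The plan is to compute $m_i$ and $m_i'$ modulo $2\ell$ in terms of these reduced quantities and then read off both gcds.

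First, $m_i'$ differs from $m_i$ by a simple correction:
\[
m_i'=m_i-v_{i-1}+v_i=m_i-b\ell .
\]
Next, $\ell$ divides $m_i$. To see this, substitute $u_i=u_{i-1}+a\ell$ and $v_i=v_{i-1}-b\ell$, which gives
\[
m_i=(u_{i-1}+a\ell)v_{i-1}-u_{i-1}(v_{i-1}-b\ell)=\ell\,(a v_{i-1}+b u_{i-1}).
\]
Set $r=a v_{i-1}+b u_{i-1}$. Then
\[
m_i=\ell r,\qquad m_i'=\ell(r-b),
\]
and therefore
\[
M_i=\gcd(\ell r,2\ell)=\ell\gcd(r,2),\qquad M_i'=\ell\gcd(r-b,2).
\]
Each of $M_i$ and $M_i'$ is thus $\ell$ or $2\ell$, according to the parity of $r$ and of $r-b$ respectively.

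It remains to split on the parity of $b$. If $b$ is odd, then $r$ and $r-b$ have opposite parities. Exactly one of $M_i,M_i'$ equals $2\ell$, which gives $(M_i,M_i')=(\ell_i,2\ell_i)$ or $(2\ell_i,\ell_i)$; this is (1). If $b$ is even, then $r$ and $r-b$ have the same parity. Hence $M_i=M_i'\in\{\ell_i,2\ell_i\}$, which is (2).

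There is no real obstacle here. The only step needing care is the factorization $m_i=\ell(a v_{i-1}+bu_{i-1})$, which makes the divisibility by $\ell$ explicit. The coprimality of $a$ and $b$ is not even needed for this argument.
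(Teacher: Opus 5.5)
Your proof is correct and follows essentially the same route as the paper: both use $\ell_i\mid m_i$ and $m_i-m_i'=v_{i-1}-v_i$, and then compare $m_i$ and $m_i'$ modulo $2\ell_i$ according to the parity of $(v_{i-1}-v_i)/\ell_i$. Your explicit factorization $m_i=\ell_i r$, $m_i'=\ell_i(r-b)$ just spells out the paper's gcd step more transparently.
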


\begin{proof}
Since $m_i=(u_i-u_{i-1})v_{i-1}-(v_i-v_{i-1})u_{i-1}$ and $\ell_i=\gcd(u_i-u_{i-1}, v_{i-1}-v_i)$, 
it follows that $M_i=\gcd(m_i, 2\ell_i)$ is equal to either $\ell_i$ or $2\ell_i$.
Since $m_i-m_i'=v_{i-1}-v_i$, 
it follows that $M'_i=\gcd(m'_i, 2\ell_i)$ is also equal to either $\ell_i$ or $2\ell_i$.

If $(v_{i-1}-v_i)/\ell_i$ is odd, then $\ell_i\hd (v_{i-1}-v_i)$ and $2\ell_i\hspace{1mm}\hnd (v_{i-1}-v_i)$.
Since $m_i-m_i'=v_{i-1}-v_i$, it follows that $M_i\ne M_i'$. This proves assertion~(1).
If $(v_{i-1}-v_i)/\ell_i$ is even, then $2\ell_i\hd (v_{i-1}-v_i)$.
Since $m_i-m_i'=v_{i-1}-v_i$, it follows that $M_i=\gcd(m_i,2\ell_i)=\gcd(m'_i,2\ell_i)=M'_i$. 
This proves assertion~(2).
\end{proof}

\begin{lemma}\label{lemma041a}
$N_i'=\ell_i$ and $N_i''=M_i$ hold.
\end{lemma}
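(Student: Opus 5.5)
The plan is to reduce both equalities to divisibility statements about $\ell_i$, using that $m_i$, $n_i'$ and $m_i'$ differ from one another by multiples of $v_{i-1}-v_i$ and of $u_i-u_{i-1}$. Write $\delta_u=u_i-u_{i-1}$ and $\delta_v=v_{i-1}-v_i$, so that $\ell_i=\gcd(\delta_u,\delta_v)$ divides both. As in the proof of Lemma~\ref{lemma041}, expand
\[
m_i=\delta_u v_{i-1}+\delta_v u_{i-1},
\]
which is divisible by $\ell_i$.

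For $N_i'$, compute $n_i'-m_i=-\kappa u_i+\kappa u_{i-1}=-\kappa\delta_u$. Hence $n_i'=m_i-\kappa\delta_u$ is a sum of multiples of $\ell_i$, so $\ell_i\mid n_i'$. It follows that $N_i'=\gcd(n_i',\ell_i)=\ell_i$.

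For $N_i''=\gcd(m_i,2n_i',2\ell_i)$, use $2n_i'=2m_i-2\kappa\delta_u$. Since $2\delta_u$ is a multiple of $2\ell_i$, we get $2n_i'\equiv 2m_i \pmod{2\ell_i}$. Therefore
\[
\gcd(m_i,2n_i',2\ell_i)=\gcd(m_i,2m_i,2\ell_i)=\gcd(m_i,2\ell_i)=M_i.
\]
The middle equality uses that $2m_i$ is a multiple of $m_i$, so it adds nothing to the gcd.

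Both parts are elementary gcd manipulations, and no step is a real obstacle. The only point that needs care is the sign convention in the determinant $n_i'$, because it determines the difference $n_i'-m_i=-\kappa\delta_u$. That difference must come out as a multiple of $\delta_u$, and it does.
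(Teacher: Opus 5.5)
Your proof is correct and follows the paper's argument: both obtain $\ell_i\mid m_i$ from the expansion of $m_i$, deduce $\ell_i\mid n_i'$ from $m_i-n_i'=\kappa(u_i-u_{i-1})$, and then read off both gcds. The only difference is for $N_i''$: the paper drops $2n_i'$ from the gcd directly because $2\ell_i\mid 2n_i'$, which you had already shown, whereas you first reduce $2n_i'$ to $2m_i$ modulo $2\ell_i$; both steps are valid.
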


\begin{proof}
Since $m_i=(u_i-u_{i-1})v_{i-1}-(v_i-v_{i-1})u_{i-1}$, it follows that $\ell_i\hd m_i$.
Hence, from $m_i-n_i'=\kappa(u_i-u_{i-1})$, we obtain $\ell_i\hd n'_i$.
Thus, $N'_i=\gcd(n'_i, \ell_i)=\ell_i$ and $M_i=\gcd(m_i,2\ell_i)=\gcd(m_i,2n'_i,2\ell_i)=N''_i$.
\end{proof}

\begin{remark}\label{rem43}
The inequality $\nu_2(v_{i-1}-v_i)>\nu_2(\ell_i)$ in Theorem~\ref{thm2} holds if and only if $(v_{i-1}-v_i)/\ell_i$ is even.
Note that it is also equivalent to the inequality $\nu_2(v_{i-1}-v_i)>\nu_2(u_i-u_{i-1})$.
\end{remark}

\subsection{Newton boundary of cDV singularities of type $cD_n$}

Let $(f,O)$ be an isolated cDV singularity of type $cD_n$ with $n>4$ given in the form~\eqref{eq1-1}.
In the following, let $(x,y,z,w)$ be the coordinates of $\C^4$. For a function $f(x,y,z,w)$ from $\C^4$ to $\C$, its Newton polygon $\Gamma_+(f)$ is defined as explained in Section~\ref{sec21}. Let $(X_1,X_2,X_3,X_4)$ be the coordinates of the space $\R^4$ on which the Newton polygon $\Gamma_+(f)$ is defined.
For simplicity, write $\Delta_i=\Delta(P_i;h)$.
This is also a face of $\Gamma_+(f)$.
Let ${\bf u}_1=(2,0,0,0)\in \R^4$ be the vertex of $\Gamma_+(f)$ corresponding to the term $x^2$, 
${\bf u}_2=(0,2,1,0)$ the vertex corresponding to $y^2z$, and 
${\bf u}_3=(0,1,0,\kappa)$ the vertex corresponding to $yw^\kappa$.

The following lemma describes the Newton boundary $\Gamma(f)$ of $f(x,y,z,w)$.

\begin{lemma}\label{lemma43z}
There exists an index $c$ in $\{0,\ldots, N-1\}$ such that
the Newton boundary $\Gamma(f)$ is the union of the following $3$-simplices{\rm \hspace{0.2mm}:}
\begin{itemize}
\item[(A)] the convex hull of the edge $\Delta_i$ and the two vertices ${\bf u}_1$ and ${\bf u}_2$ for $i\in \{c+1,\ldots, N\}$\,{\rm ;}
\item[(B)] the convex hull of the four vertices ${\bf v}_{c}$, ${\bf u}_1$, ${\bf u}_2$ and ${\bf u}_3$\,{\rm ;}
\item[(C)] the convex hull of the edge $\Delta_i$ and the two vertices ${\bf u}_1$ and ${\bf u}_3$ 
for $i\in \{1,\ldots, c\}$ in the case $c\geq 1$.
\end{itemize}
\end{lemma}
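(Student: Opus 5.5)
The plan is to describe the compact facets of $\Gamma_+(f)$ through their supporting positive weight vectors. Every compact $3$-dimensional face of $\Gamma_+(f)$ has the form $\Delta(P;f)$ for some positive primitive $P={}^t(p_x,p_y,p_z,p_w)$. Its vertices are among ${\bf u}_1,{\bf u}_2,{\bf u}_3$ and the vertices ${\bf v}_j=(0,0,u_j,v_j)$ of $\Gamma(h)$. These are the only candidate vertices, because $\Gamma_+(f)$ is the convex hull of the translates of exactly these points.

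Normalizing $d(P;f)$, the condition that ${\bf u}_1$ lies on the supporting hyperplane forces $2p_x=d$. A facet cannot avoid ${\bf u}_1$: it is the only point with $X_1\neq 0$, and a $3$-dimensional face with all its vertices in $\{X_1=0\}$ would lie in that coordinate hyperplane and hence not be a compact facet of the full-dimensional $\Gamma_+(f)$. The remaining data is the linear functional $L(Y,Z,W)=p_yY+p_zZ+p_wW$ restricted to the $3$-dimensional slice $X_1=0$. So the facets of $\Gamma(f)$ correspond bijectively to the compact facets (triangles) of the $3$-dimensional Newton polyhedron $\Gamma_+(g)$, where $g=y^2z+ayw^\kappa+h(z,w)$, coned with ${\bf u}_1$. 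This reduces the lemma to describing $\Gamma(g)\subset\R^3$.

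For $\Gamma(g)$, I would project from the $Y$-direction. The points ${\bf u}_2=(2,1,0)$ and ${\bf u}_3=(1,0,\kappa)$ are the only points with $Y>0$; ${\bf u}_2$ has $Y=2$ and ${\bf u}_3$ has $Y=1$. A facet of $\Gamma(g)$ must contain a vertex with $Y>0$, by the same coordinate-hyperplane argument. Each triangle is therefore either
\begin{itemize}
\item[(a)] ${\bf u}_2$ together with an edge $\Delta_i$,
\item[(b)] ${\bf u}_3$ together with an edge $\Delta_i$, or
\item[(c)] ${\bf u}_2$, ${\bf u}_3$ and a single vertex ${\bf v}_j$.
\end{itemize}
The cone $Y\cdot(\ldots)$ structure is encoded as follows. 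For a weight $(p_y,p_z,p_w)$ whose restriction $(p_z,p_w)$ supports $\Delta_i$ with value $d_i$, the face contains ${\bf u}_2$ iff $2p_y+p_z=d_i$. It contains ${\bf u}_3$ iff $p_y+\kappa p_w=d_i$. Convexity of both requires $2p_y+p_z\ge d_i$ and $p_y+\kappa p_w\ge d_i$. Eliminating $p_y$, the edge $\Delta_i$ goes with ${\bf u}_2$ rather than ${\bf u}_3$ exactly when $\tfrac{d_i-p_z}{2}\ge d_i-\kappa p_w$, i.e.\ $2\kappa p_w - p_z \ge d_i$ after normalization. This is a comparison of the linear function $\phi(Z,W)=Z+2W$-type evaluation: in the $(Z,W)$-plane it asks whether the point $(-1,2\kappa)$ (the ``shadow'' of ${\bf u}_3$ relative to ${\bf u}_2$, since $2{\bf u}_3-{\bf u}_2=(0,-1,2\kappa)$) lies above or below the line through $\Delta_i$.

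Since the slopes of the $\Delta_i$ increase monotonically in $i$ (ordering $p_{i-1}q_i-p_iq_{i-1}>0$), the set of $i$ for which $(-1,2\kappa)$ lies on the non-negative side is an initial or final segment. This yields a cut index $c$, characterized as the vertex ${\bf v}_c$ of $\Gamma(h)$ at which the supporting line from $(-1,2\kappa)$ touches $\Gamma_+(h)$. Then edges $i>c$ give type (A), edges $i\le c$ give type (C), and the single transitional triangle ${\bf u}_2{\bf u}_3{\bf v}_c$ gives type (B). When $a=0$ there is no ${\bf u}_3$; one then takes $c=0$ formally (treating ${\bf u}_3$ as absent), though the statement presumes $a\neq0$ for (B) and (C).

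The main obstacle is verifying that $c\le N-1$, i.e.\ that at least one edge is of type (A), together with the tie cases. The inequality $c\le N-1$ uses $h_d\neq0$ with $d\ge4$ and $\kappa\ge3$: the last edge reaches the $Z$-axis or a steep end, whereas $(-1,2\kappa)$ has negative $Z$-coordinate. I would check this with the explicit inequality for $P_N$. Ties, where $(-1,2\kappa)$ lies on the line of $\Delta_i$ and the faces merge into a non-simplex, must either be excluded or absorbed; I would handle them by choosing $c$ as the larger index.
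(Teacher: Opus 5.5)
Your argument is essentially the paper's. The paper tests whether ${\bf u}_3$ lies below the supporting hyperplane of $\mathrm{conv}({\bf u}_1,{\bf u}_2,\Delta_i)$, whose normal is $Q_i^{(1,2)}$. Its inequality $(u_{i-1}+1)\frac{v_i-v_{i-1}}{u_i-u_{i-1}}-v_{i-1}<-2\kappa$ says exactly that your point $(-1,2\kappa)$ lies strictly below the line through $\Delta_i$, and $c$ is the largest index for which this holds. Your reduction to $g$ and the tangent-line picture justify the monotonicity better than the paper's remark about slopes. The edges visible from the external point $(-1,2\kappa)$ form a contiguous chain starting with the vertical ray at ${\bf v}_0$, so they are an initial segment $\{1,\dots,c\}$; this also settles your ``initial or final''.

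The genuine problem is the step you call the main obstacle: $c\le N-1$ cannot be proved, because it is false.

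\emph{Counterexample.} Take $a\ne 0$, $\kappa=3$, $h=z^5+w^{100}$. This $f$ is Newton non-degenerate, isolated by Lemma~\ref{lemma21}, and of type $cD_6$.
\begin{itemize}
\item Here $N=1$, and the line through $\Delta_1$ is $20Z+W=100$, which has height $120>2\kappa$ at $Z=-1$.
\item Equivalently, $Q_1^{(1,2)}=(50,40,20,1)$ takes the value $100$ on $\Delta_1^{(1,2)}$ but only $43$ at ${\bf u}_3$.
\item So $c=N=1$. $\Gamma(f)$ consists of the (C)-simplex $\mathrm{conv}({\bf u}_1,{\bf u}_3,\Delta_1)$, with weight $(50,97,20,1)$, and the (B)-simplex $\mathrm{conv}({\bf u}_1,{\bf u}_2,{\bf u}_3,{\bf v}_1)$, with weight $(5,4,2,2)$. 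There is no simplex of type (A).
\end{itemize}
The paper itself uses $c=N$ in Section~\ref{sec5}: there $h=bz^pw^q$ has $N=0$ and ${\bf v}_c=(p,q)$.

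So the correct range is $c\in\{0,\dots,N\}$, which is what both your construction and the paper's proof actually produce. At the right end, all you need is that the horizontal ray $W=v_N$ is not visible from $(-1,2\kappa)$. This holds because $w^2\nmid h$ forces $v_N\le 1<2\kappa$, so the tangent vertex ${\bf v}_c$ exists even when $c=N$.

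Separately, your sketch (like the paper) asserts rather than checks that ${\bf u}_2{\bf u}_3{\bf v}_c$ is a compact facet. Its weight has $(Z,W)$-part $(2\kappa-v_c,u_c+1)$, which is the normal of your tangent line, but its $Y$-part is $\kappa(u_c-1)+v_c$. That is non-positive when $u_c=0$ and $v_0\le\kappa$. In that case $yw^\kappa$ is dominated by $w^{v_0}$, ${\bf u}_3$ is not a vertex of $\Gamma_+(f)$, and (B) is not a face. This degenerate case has to be excluded.
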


\begin{proof}
If $a=0$, then $\Gamma(f)$ consists of only faces in (A). Therefore, $c=0$ and the assertion follows.
Suppose that $a\ne 0$.
The primitive vector $Q_i^{(1,2)}$ orthogonal to the convex hull $\Delta_i^{(1,2)}$ of ${\bf u}_1$, ${\bf u}_2$, ${\bf v}_{i-1}$ and ${\bf v}_i$ is
\begin{equation}\label{eqq12}
Q_i^{(1,2)}
=\frac{1}{M_i''}\begin{pmatrix} m_i \\ m_i' \\ 2(v_{i-1}-v_i) \\ 2(u_i-u_{i-1}) \end{pmatrix}.
\end{equation}
Since $d(Q_i^{(1,2)}; f)=2m_i/M_i''$,
it follows that $\Delta_i^{(1,2)}$ is not contained in $\Gamma(f)$ if and only if 
\[
\frac{2m_i}{M_i''}>d(Q_i^{(1,2)}; yw^\kappa)=\frac{m_i'+2\kappa(u_i-u_{i-1})}{M_i''}
=\frac{m_i+(v_i-v_{i-1})+2\kappa(u_i-u_{i-1})}{M_i''}.
\]
This inequality is equivalent to
\[
(u_{i-1}+1)\frac{v_i-v_{i-1}}{u_i-u_{i-1}}-v_{i-1}< -2\kappa.
\]
The first term of the left-hand side is a positive multiple of the slope of $\Delta_i$, which is monotonically increasing in $i$.
The second term, $-v_{i-1}$, also increases in $i$.
Then, the largest index satisfying this inequality is the index $c$ in the assertion.
\end{proof}

The faces of $\Gamma(f)$ lying in coordinate subspaces of $\R^4$ are decomposed into the following six cases:
\begin{itemize}
\item[(a)] faces containing $\Delta_i$ in Case (A);
\item[(b)] faces containing the two vertices ${\bf u}_2$ and ${\bf u}_3$;
\item[(c)] faces containing $\Delta_i$ in Case (C);
\item[(d)] faces containing ${\bf v}_N\cap X_3$;
\item[(e)] faces containing ${\bf v}_0\cap X_4$;
\item[(f)] the vertex ${\bf u}_1$.
\end{itemize}
Note that no faces exist in Cases~(b) and~(c) if $a=0$, and no faces exist in Case~(d) if ${\bf v}_N\cap X_3$ is empty, nor in Case~(e) if  ${\bf v}_0\cap X_4$ is empty.
In the following subsections, we determine the factors of $\zeta_f(t)$ associated with each case.

\subsection{Factors from the faces in Case~(a)}

Let $\Delta_i$ be a face of $\Gamma(h)$ in Case~(a).
The faces of $\Gamma(f)$ containing $\Delta_i$ and lying in a coordinate subspace of $\R^4$ are
the following (see Figure~\ref{fig2}):
\begin{itemize}
\item the convex hull of ${\bf u}_1$, ${\bf u}_2$, and $\Delta_i$, which is a $3$-simplex, denoted by $\Delta_i^{(1,2)}$;
\item the convex hull of ${\bf u}_1$ and $\Delta_i$, which is a $2$-simplex lying in the subspace spanned by $(X_1,X_3,X_4)$, denoted by $\Delta_i^{(1)}$; 
\item the convex hull of ${\bf u}_2$ and $\Delta_i$, which is a $2$-simplex lying in the subspace spanned by $(X_2,X_3,X_4)$, denoted by $\Delta_i^{(2)}$; 
\item the $1$-simplex $\Delta_i$, which lies in the subspace spanned by $(X_3,X_4)$.
\end{itemize}

\begin{figure}[htbp]
\includegraphics[scale=0.9, bb=129 571 507 710]{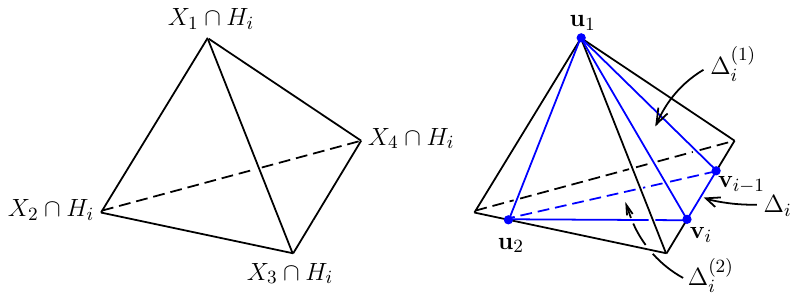}
\caption{The simplex in the left figure lies in the hyperplane $H_i\subset\R_{\geq 0}^4$ containing the face $\Delta_i^{(1,2)}$.
Its vertices are the intersections of $H_i$ with the coordinate subspaces $X_1$, $X_2$, $X_3$, and $X_4$.
The simplex on the right, given as the convex hull of the vertices ${\bf u}_1$, ${\bf u}_2$, ${\bf v}_i$, and ${\bf v}_{i-1}$, is the face $\Delta_i^{(1,2)}$, which lies in $H_i$.}
\label{fig2}
\end{figure}

\begin{lemma}\label{lemma43a}
The product $\zeta_{f,\Delta_i}(t)$ of the factors of $\zeta_f(t)$ corresponding to the faces in Case~(a)
is given as follows:
\[
   \zeta_{f,\Delta_i}(t)
=
\begin{cases}
\displaystyle 
\frac{(1-t^{2m_i/\ell_i})^{\ell_i}}{(1-t^{m_i/\ell_i})^{\ell_i}}
&
\text{if $(v_{i-1}-v_i)/\ell_i$ is odd,} 
\vspace{3mm}
\\
   (1-t^{m_i/\ell_i})^{\ell_i}
& \text{if $(v_{i-1}-v_i)/\ell_i$ is even.}
\end{cases}
\]
\end{lemma}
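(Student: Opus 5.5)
The plan is to apply Varchenko's formula (Theorem~\ref{thmvar}) to each of the four faces listed above, namely $\Delta_i$, $\Delta_i^{(1)}$, $\Delta_i^{(2)}$ and $\Delta_i^{(1,2)}$. For each face I will determine the primitive normal vector, $d(Q;f^I)$ and $\chi(Q)$. Then I will multiply the four factors and simplify using Lemma~\ref{lemma041}. For each face, $|I|!\,\text{Vol}_{|I|}(\text{Cone}(\Delta,0))$ is the absolute value of the determinant of the matrix whose rows are the vertices of $\Delta$. The normal vectors are obtained by dividing the obvious integral normal by the gcd of its entries. Throughout I use $\gcd(m_i,2(v_{i-1}-v_i),2(u_i-u_{i-1}))=\gcd(m_i,2\ell_i)$, and likewise with $m_i'$ in place of $m_i$.

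\emph{The edge $\Delta_i$} ($I=\{3,4\}$). Here $Q=P_i$ and $d=m_i/\ell_i$, and twice the area of the cone is $m_i$. Hence $\chi=-\ell_i$, and the factor is $(1-t^{m_i/\ell_i})^{\ell_i}$.

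\emph{The face $\Delta_i^{(1)}$} ($I=\{1,3,4\}$). The normal is $\frac{1}{M_i}\,{}^t(m_i,\,2(v_{i-1}-v_i),\,2(u_i-u_{i-1}))$, which gives $d=2m_i/M_i$. The vertex determinant is $2m_i$. Hence $\chi=M_i$, and the factor is $(1-t^{2m_i/M_i})^{-M_i}$.

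\emph{The face $\Delta_i^{(2)}$} ($I=\{2,3,4\}$). The vertices are $(2,1,0)$, $(0,u_{i-1},v_{i-1})$ and $(0,u_i,v_i)$. The normal is $\frac{1}{M_i'}\,{}^t(m_i',\,2(v_{i-1}-v_i),\,2(u_i-u_{i-1}))$. One checks $2m_i'+2(v_{i-1}-v_i)=2m_i$, so $d=2m_i/M_i'$. The determinant is again $2m_i$, giving the factor $(1-t^{2m_i/M_i'})^{-M_i'}$.

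\emph{The simplex $\Delta_i^{(1,2)}$} ($I=\{1,2,3,4\}$). The normal is \eqref{eqq12} and $d=2m_i/M_i''$. The $4\times4$ vertex determinant equals $4m_i$. Hence $\chi=-4m_i/(2m_i/M_i'')=-2M_i''$, and the factor is $(1-t^{2m_i/M_i''})^{2M_i''}$.

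It remains to identify $M_i''=\gcd(M_i,M_i')$ in each case of Lemma~\ref{lemma041}. If $(v_{i-1}-v_i)/\ell_i$ is odd, then $\{M_i,M_i'\}=\{\ell_i,2\ell_i\}$ and $M_i''=\ell_i$. The product is then
\[
(1-t^{2m_i/\ell_i})^{2\ell_i}(1-t^{2m_i/\ell_i})^{-\ell_i}(1-t^{m_i/\ell_i})^{-2\ell_i}(1-t^{m_i/\ell_i})^{\ell_i},
\]
which is the first formula. If $(v_{i-1}-v_i)/\ell_i$ is even, then $M_i=M_i'=M_i''\in\{\ell_i,2\ell_i\}$. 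The three factors coming from $\Delta_i^{(1)}$, $\Delta_i^{(2)}$ and $\Delta_i^{(1,2)}$ cancel, since their exponents are $-M_i-M_i+2M_i=0$. This leaves $(1-t^{m_i/\ell_i})^{\ell_i}$.

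The main obstacle is bookkeeping rather than a conceptual point. One must verify that the displayed vectors are really primitive, i.e.\ that the gcd of their entries is exactly $M_i$, $M_i'$, $M_i''$. One must also get the signs $(-1)^{|I|-1}$ and the determinant values right. I would check the gcds by reducing $2(v_{i-1}-v_i)$ and $2(u_i-u_{i-1})$ to $2\ell_i$, using $m_i-m_i'=v_{i-1}-v_i$ as in Lemma~\ref{lemma041}.
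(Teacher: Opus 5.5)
Your proposal is correct and follows essentially the same route as the paper. You compute the four Varchenko factors for $\Delta_i$, $\Delta_i^{(1)}$, $\Delta_i^{(2)}$ and $\Delta_i^{(1,2)}$, with the same primitive normals, $d$-values and Euler characteristics, and then simplify the product using Lemma~\ref{lemma041} together with $M_i''=\gcd(M_i,M_i')$.
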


\begin{proof}
We calculate the factors of $\zeta_f(t)$ corresponding to the faces containing $\Delta_i$ one by one.
First, calculate the factor for $\Delta_i^{(1,2)}$.
The volume $4!\,\text{\rm Vol}_4(\text{\rm Cone}(\Delta_i^{(1,2)},0))$ in the formula in Theorem~\ref{thmvar}
is 
\[
\displaystyle 
\begin{vmatrix} 
\;2 & 0 & 0 & 0\; \\
\;0 & 2 & 0 & 0\; \\
\;0 & 1 & u_i & u_{i-1}\; \\
\;0 & 0 & v_i & v_{i-1}\;
\end{vmatrix}=4m_i.
\]
The primitive vector $Q_i^{(1,2)}$ orthogonal to $\Delta_i^{(1,2)}$ is given in~\eqref{eqq12}.
Note that $\Delta(Q_i^{(1,2)};f)=\Delta_i^{(1,2)}$. Since
\[
d(Q_i^{(1,2)}; f)=\frac{2m_i}{M_i''}
\quad \text{and} \quad
\chi(Q_i^{(1,2)})=\frac{(-1)^{4-1} 4m_i}{d(Q_i^{(1,2)};f)}=-2M_i'',
\]
the factor is $\displaystyle (1-t^{2m_i/M_i''})^{2M_i''}$ by Theorem~\ref{thmvar}.

The factors for the remaining faces can be obtained similarly.
For $\Delta_i^{(1)}$,
since the volume is $3!\,\text{\rm Vol}_3(\text{\rm Cone}(\Delta_i^{(1)},0))=2m_i$ and the primitive vector $Q_i^{(1)}$ orthogonal to $\Delta_i^{(1)}$ in the subspace spanned by $(X_1,X_3,X_4)$ is 
\[
Q_i^{(1)}
=\frac{1}{M_i}\begin{pmatrix} m_i \\ 2(v_{i-1}-v_i) \\ 2(u_i-u_{i-1}) \end{pmatrix},
\]
it follows that 
\[
d(Q_i^{(1)};f(x,0,z,w))=\frac{2m_i}{M_i}
\quad \text{and} \quad
\chi(Q_i^{(1)})=\frac{(-1)^{3-1} 2m_i}{d(Q_i^{(1)};f(x,0,z,w))}=M_i.
\]
Hence the factor is $(1-t^{2m_i/M_i})^{-M_i}$.
For $\Delta_i^{(2)}$, since $3!\,\text{\rm Vol}_3(\text{\rm Cone}(\Delta_i^{(2)},0))=2m_i$ and the primitive vector $Q_i^{(2)}$ orthogonal to $\Delta_i^{(2)}$ is 
\[
Q_i^{(2)}
=\frac{1}{M_i'}\begin{pmatrix} m_i' \\ 2(v_{i-1}-v_i) \\ 2(u_i-u_{i-1}) \end{pmatrix},
\]
it follows that 
\[
d(Q_i^{(2)};f(0,y,z,w))=\frac{2m_i}{M_i'}
\quad \text{and} \quad
\chi(Q_i^{(2)})=\frac{(-1)^{3-1} 2m_i}{d(Q_i^{(2)};f(0,y,z,w))}=M_i'.
\]
Hence the factor is $(1-t^{2m_i/M_i'})^{-M_i'}$.
For $\Delta_i$, since $2!\,\text{\rm Vol}_2(\text{\rm Cone}(\Delta_i,0))=m_i$ and the primitive vector $P_i$ orthogonal to $\Delta_i$ is 
\[
P_i=\frac{1}{\ell_i}\begin{pmatrix} v_{i-1}-v_i \\ u_i-u_{i-1} \end{pmatrix},
\]
it follows that
\[
d(P_i;h)=\frac{m_i}{\ell_i}
\quad \text{and} \quad
\chi(P_i)=\frac{(-1)^{2-1} m_i}{d(P_i;h)}=-\ell_i.
\]
Hence the factor is $(1-t^{m_i/\ell_i})^{\ell_i}$.

The product $\zeta_{f,\Delta_i}(t)$ of the four factors obtained above is
\[
 \zeta_{f,\Delta_i}(t)
=\frac{(1-t^{2m_i/M_i''})^{2M''_i}(1-t^{m_i/\ell_i})^{\ell_i}}{(1-t^{2m_i/M_i})^{M_i}(1-t^{2m_i/M_i'})^{M_i'}}.
\]
If $(v_{i-1}-v_i)/\ell_i$ is odd, then by Lemma~\ref{lemma041}~(1) and the fact that $M_i''=\gcd(M_i, M_i')$, we have $(M_i, M_i', M_i'')=(\ell_i, 2\ell_i, \ell_i)$ or $(2\ell_i, \ell_i, \ell_i)$.
Hence the formula for $\zeta_{f,\Delta_i}(t)$ in the assertion follows.
If $(v_{i-1}-v_i)/\ell_i$ is even, then we have $M_i=M_i'=M_i''$ by Lemma~\ref{lemma041}~(2),
and hence the formula for $\zeta_{f,\Delta_i}(t)$ in the assertion follows.
\end{proof}

\subsection{Factors from the faces in Case (b)}

The faces of $\Gamma(f)$ belonging to Case~(b) and lying in a coordinate subspace of $\R^4$ are
the following (see Figure~\ref{fig7}):
\begin{itemize}
\item the convex hull of ${\bf u}_1$, ${\bf u}_2$, ${\bf u}_3$ and ${\bf v}_{c}$, which is a $3$-simplex, denoted by ${\bf v}_{c}^{(1,2,3)}$;
\item the convex hull of ${\bf u}_2$, ${\bf u}_3$ and ${\bf v}_{c}$, which is a $2$-simplex lying in the subspace spanned by $(X_2,X_3,X_4)$, denoted by ${\bf v}_{c}^{(2,3)}$.
\end{itemize}

\begin{figure}[htbp]
\includegraphics[scale=0.70, bb=129 521 306 707]{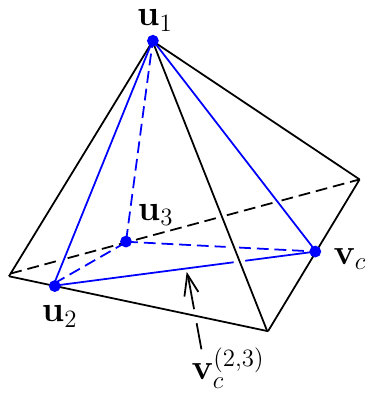}
\caption{Faces in Case~(b)}
\label{fig7}
\end{figure}

Put $M=\gcd(2\kappa-v_{c}, u_{c}+1)$ and $M'=\gcd(2\kappa-v_{c}, 2(u_{c}+1))$.
Then, either $M'=M$ or $M'=2M$ holds, since they satisfy $M\hd M'$ and $M'\hd 2M$.

\begin{lemma}\label{lemma43b}
The product $\zeta_{f,{\bf v}_{c}}(t)$ of the factors of $\zeta_f(t)$ corresponding to the faces in Case~(b)
is given as follows:
\[
   \zeta_{f,{\bf v}_{c}}(t)
=
\begin{cases}
\displaystyle 
\frac{(1-t^{2(2\kappa u_{c}+v_{c})/M})^{M}}{(1-t^{(2\kappa u_{c}+v_{c})/M})^{M}}
&
\text{if $M'=M$,} 
\vspace{3mm}
\\
  (1-t^{(2\kappa u_{c}+v_{c})/M})^{M}
   & \text{if $M'=2M$.}
\end{cases}
\]
\end{lemma}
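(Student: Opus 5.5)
Following the proof of Lemma~\ref{lemma43a}, I will apply Theorem~\ref{thmvar} separately to the two faces in Case~(b), the $3$-simplex ${\bf v}_{c}^{(1,2,3)}$ and the $2$-simplex ${\bf v}_{c}^{(2,3)}$, and then multiply the two factors. For each face I need three things: the normalized cone volume (a determinant of the vertex coordinates), the primitive normal vector $Q$ with its value $d(Q;\cdot)$, and then $\chi(Q)$.

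\textbf{The $3$-simplex ${\bf v}_c^{(1,2,3)}$.} Its vertices are ${\bf u}_1=(2,0,0,0)$, ${\bf u}_2=(0,2,1,0)$, ${\bf u}_3=(0,1,0,\kappa)$ and ${\bf v}_c=(0,0,u_c,v_c)$. Expanding along the first row gives $4!\,\mathrm{Vol}=2\,|\det(\,(2,1,0),(1,0,\kappa),(0,u_c,v_c)\,)|=2(2\kappa u_c+v_c)$. The normal vector $(q_1,q_2,q_3,q_4)$ must satisfy
\[
2q_1=2q_2+q_3=q_2+\kappa q_4=u_cq_3+v_cq_4 .
\]
Solving this system gives a vector proportional to
\[
\bigl(2\kappa u_c+v_c,\; \kappa u_c+v_c-\kappa,\; 2\kappa-v_c,\; 2(u_c+1)\bigr),
\]
with common value $2\kappa u_c+v_c$ for $d$ before normalization. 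I still have to check these entries against the three equations. The gcd of this vector is $\gcd(2\kappa u_c+v_c,\,2\kappa-v_c,\,2(u_c+1))$, and I expect it to reduce to $M'$. Indeed $2\kappa u_c+v_c\equiv 2\kappa u_c+2\kappa=\kappa\cdot 2(u_c+1)$ modulo $2\kappa-v_c$, so the first entry is redundant. The second entry equals $\bigl((2\kappa u_c+v_c)-(2\kappa-v_c)\bigr)/2$, which is redundant for the same reason, although its parity needs a little care. This gives $d=2(2\kappa u_c+v_c)/M'$ and $\chi=-4!\,\mathrm{Vol}/d=-M'$, so the factor is $(1-t^{2(2\kappa u_c+v_c)/M'})^{M'}$.

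\textbf{The $2$-simplex ${\bf v}_c^{(2,3)}$.} This face lies in $(X_2,X_3,X_4)$. The volume is $3!\,\mathrm{Vol}=2\kappa u_c+v_c$. The normal vector is $(\kappa u_c+v_c-\kappa,\;2\kappa-v_c,\;u_c+1)$ divided by its gcd. I would argue, as above, that this gcd is $M$, using $2(\kappa u_c+v_c-\kappa)=(2\kappa u_c+v_c)-(2\kappa-v_c)$ together with the congruence above. That gives $d=(2\kappa u_c+v_c)/M$, $\chi=+M$, and the factor $(1-t^{(2\kappa u_c+v_c)/M})^{-M}$.

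\textbf{Combining the factors.} If $M'=M$, the product of the two factors is exactly the first case of the lemma. If $M'=2M$, then $2(2\kappa u_c+v_c)/M'=(2\kappa u_c+v_c)/M$, so the product is $(1-t^{(2\kappa u_c+v_c)/M})^{2M-M}$, which is the second case.

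The main obstacle is verifying the two gcd identities exactly. This includes confirming that the middle entry $\kappa u_c+v_c-\kappa$ contributes no extra factor, and getting the parity bookkeeping right in the $3$-simplex case so that the gcd is $M'$ rather than $M$. I would handle this by reducing each entry modulo $2\kappa-v_c$ and modulo $u_c+1$ and comparing.
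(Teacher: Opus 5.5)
Your plan matches the paper: apply Theorem~\ref{thmvar} to ${\bf v}_c^{(1,2,3)}$ and ${\bf v}_c^{(2,3)}$, then multiply. Your volumes, your treatment of the $2$-simplex (normal vector, gcd $M$, factor $(1-t^{(2\kappa u_c+v_c)/M})^{-M}$), and the final combination of the two factors are all correct. The error is in the normal vector of the $3$-simplex.

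Write $A=2\kappa u_c+v_c$ and $B=2\kappa-v_c$. Your vector $(A,\,(A-B)/2,\,B,\,2(u_c+1))$ does not satisfy the equations you set up: $2q_1=2A$, but $2q_2+q_3=A$. The $X_2,X_3$ entries are at half the scale of the $X_1,X_4$ entries. The correct integral normal vector is
\[
(A,\;A-B,\;2B,\;2(u_c+1)),
\]
which takes the common value $2A$ on all four vertices. This is $M'$ times the paper's $Q_{{\bf v}_c}^{(1,2,3)}$. The solution with common value $A$ is $(A/2,(A-B)/2,B,u_c+1)$, which is not integral when $A$ is odd.

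This is exactly where the ``parity care'' you deferred becomes a real problem. With the correct vector the middle entry is $A-B$, and using $A=\kappa\cdot 2(u_c+1)-B$ you get
\[
\gcd(A,\,A-B,\,2B,\,2(u_c+1))=\gcd(A,\,B,\,2(u_c+1))=\gcd(B,\,2(u_c+1))=M'.
\]
Hence $d=2A/M'$ and $\chi=-M'$, with no parity issue at all. With your vector, the middle entry is $(A-B)/2=\kappa(u_c+1)-B$. This is not divisible by $M'$ when, for instance, $\kappa$ is odd and $M'=2M$. Also, the common value you record is $A$, not $2A$. So neither the gcd $M'$ nor the value $d=2A/M'$ that you then use follows from what you wrote.

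Once you replace the vector, the argument is the paper's verbatim. For the $2$-simplex, the identity $\kappa u_c+v_c-\kappa=\kappa(u_c+1)-B$ gives the gcd $M$ immediately, as in the paper, without going through the factor-$2$ relation.
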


\begin{proof}
For ${\bf v}_{c}^{(1,2,3)}$, since $4!\,\text{\rm Vol}_4(\text{\rm Cone}({\bf v}_{c}^{(1,2,3)},0))=2(2\kappa u_{c}+v_{c})$
and the primitive vector $Q_{{\bf v}_{c}}^{(1,2,3)}$ orthogonal to ${\bf v}_{c}^{(1,2,3)}$ is 
\[
Q_{{\bf v}_{c}}^{(1,2,3)}
=\frac{1}{M'}\begin{pmatrix} 2\kappa u_{c}+v_{c} \\ (2\kappa u_{c}+v_{c})-(2\kappa-v_{c}) \\ 2(2\kappa-v_{c}) \\ 2(u_{c}+1) \end{pmatrix},
\]
it follows that
\[
d(Q_{{\bf v}_{c}}^{(1,2,3)}; f)=\frac{2(2\kappa u_{c}+v_{c})}{M'}
\quad \text{and} \quad
\chi(Q_{{\bf v}_{c}}^{(1,2,3)})=\frac{(-1)^{4-1} 2(2\kappa u_{c}+v_{c})}{d(Q_{{\bf v}_{c}}^{(1,2,3)};f)}=-M'.
\]
Here we used
\[
\begin{split}
   M'&=\gcd(2\kappa-v_{c}, 2(u_{c}+1))
     =\gcd(2\kappa(u_{c}+1), 2\kappa-v_{c}, 2(u_{c}+1)) \\
   &=\gcd(2\kappa u_{c}+v_{c}, 2\kappa-v_{c}, 2(u_{c}+1)).
\end{split}
\]
Hence the factor is $\displaystyle (1-t^{2(2\kappa u_{c}+v_{c})/M'})^{M'}$ by Theorem~\ref{thmvar}.
For ${\bf v}_{c}^{(2,3)}$, since the volume is $3!\,\text{\rm Vol}_3(\text{\rm Cone}({\bf v}_{c}^{(2,3)},0))=2\kappa u_{c}+v_{c}$ and the primitive vector $Q_{{\bf v}_{c}}^{(2,3)}$ orthogonal to ${\bf v}_{c}^{(2,3)}$ is 
\[
Q_{{\bf v}_{c}}^{(2,3)}
=\frac{1}{M}\begin{pmatrix} \kappa(u_{c}+1)-(2\kappa-v_{c}) \\ 2\kappa-v_{c} \\ u_{c}+1 \end{pmatrix},
\]
it follows that 
\[
d(Q_{{\bf v}_{c}}^{(2,3)};f(0,y,z,w))=\frac{2\kappa u_{c}+v_{c}}{M}
\quad \text{and} \quad
\chi(Q_{{\bf v}_{c}}^{(2,3)})=\frac{(-1)^{3-1} (2\kappa u_{c}+v_{c})}{d(Q_{{\bf v}_{c}}^{(2,3)};f(0,y,z,w))}=M.
\]
Hence the factor is $(1-t^{(2\kappa u_{c}+v_{c})/M})^{-M}$.

The product $\zeta_{f,{\bf v}_{c}}(t)$ of the factors obtained above is
\[
 \zeta_{f,{\bf v}_{c}}(t)
  =\frac{(1-t^{2(2\kappa u_{c}+v_{c})/M'})^{M'}}{(1-t^{(2\kappa u_{c}+v_{c})/M})^{M}}.
\]
Thus, the assertion follows.
\end{proof}

\begin{remark}\label{rem4-7}
The inequality $\nu_2(2\kappa-v_c)>\nu_2(u_c+1)$ in Theorem~\ref{thm2} holds if and only if $M'=2M$,
since $M=\gcd(2\kappa-v_{c}, u_{c}+1)$ and $M'=\gcd(2\kappa-v_{c}, 2(u_{c}+1))$.
\end{remark}

\subsection{Factors from the faces in Case (c)}

Let $\Delta_i$ be a face of $\Gamma(h)$ in Case~(c).
The faces of $\Gamma(f)$ containing $\Delta_i$ and lying in a coordinate subspace of $\R^4$ are the following (see Figure~\ref{fig8}):
\begin{itemize}
\item the convex hull of ${\bf u}_1$, ${\bf u}_3$, and $\Delta_i$, which is a $3$-simplex, denoted by $\Delta_i^{(1,3)}$;
\item the convex hull of ${\bf u}_1$ and $\Delta_i$, which is a $2$-simplex lying in the subspace spanned by $(X_1,X_3,X_4)$, denoted by $\Delta_i^{(1)}$; 
\item the convex hull of ${\bf u}_3$ and $\Delta_i$, which is a $2$-simplex lying in the subspace spanned by $(X_2,X_3,X_4)$, denoted by $\Delta_i^{(3)}$; 
\item the $1$-simplex $\Delta_i$, which lies in the subspace spanned by $(X_3,X_4)$.
\end{itemize}

\begin{figure}[htbp]
\includegraphics[scale=0.95, bb=129 571 283 707]{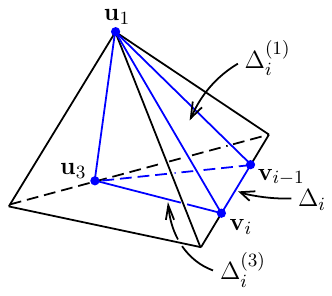}
\caption{Faces in Case~(c)}
\label{fig8}
\end{figure}

\begin{lemma}\label{lemma43c}
No factors corresponding to the faces in Case~(c) remain in $\zeta_f(t)$.
\end{lemma}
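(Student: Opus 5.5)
The plan is to compute, exactly as in the proof of Lemma~\ref{lemma43a}, the factor of $\zeta_f(t)$ contributed by each of the four faces $\Delta_i^{(1,3)}$, $\Delta_i^{(1)}$, $\Delta_i^{(3)}$ and $\Delta_i$ via Theorem~\ref{thmvar}. I will then check that they cancel pairwise: $\Delta_i^{(1,3)}$ against $\Delta_i^{(1)}$, and $\Delta_i^{(3)}$ against $\Delta_i$. The gcd identities of Lemma~\ref{lemma041a}, namely $N_i'=\ell_i$ and $N_i''=M_i$, are precisely what make the exponents match.

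\emph{The $3$-simplex $\Delta_i^{(1,3)}$.} Its vertices are ${\bf u}_1=(2,0,0,0)$, ${\bf u}_3=(0,1,0,\kappa)$, ${\bf v}_{i-1}$ and ${\bf v}_i$. Expanding along the first coordinate gives
\[
4!\,\text{\rm Vol}_4(\text{\rm Cone}(\Delta_i^{(1,3)},0))=2\begin{vmatrix}1&0&0\\0&u_i&u_{i-1}\\ \kappa&v_i&v_{i-1}\end{vmatrix}=2m_i .
\]
For the normal vector, the $(X_3,X_4)$-part must be proportional to $P_i$, and the value on ${\bf v}_i$ is then proportional to $m_i/\ell_i$. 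Requiring the same value on ${\bf u}_1$ and ${\bf u}_3$ forces the first coordinate to be half of it and the second to be the value minus $\kappa$ times the $X_4$-coordinate. Clearing denominators, I expect
\[
Q_i^{(1,3)}=\frac{1}{N_i''}\begin{pmatrix} m_i\\ 2n_i'\\ 2(v_{i-1}-v_i)\\ 2(u_i-u_{i-1})\end{pmatrix},
\]
which is primitive by the definition $N_i''=\gcd(m_i,2n_i',2\ell_i)$. Lemma~\ref{lemma041a} gives $N_i''=M_i$. Hence $d=2m_i/M_i$, $\chi=-M_i$, and the factor is $(1-t^{2m_i/M_i})^{M_i}$.

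\emph{The $2$-simplex $\Delta_i^{(1)}$.} This face is the same as in Case~(a), so it contributes $(1-t^{2m_i/M_i})^{-M_i}$, which cancels the previous factor.

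\emph{The $2$-simplex $\Delta_i^{(3)}$.} It lies in the $(X_2,X_3,X_4)$-space, with vertices $(1,0,\kappa)$, $(0,u_{i-1},v_{i-1})$ and $(0,u_i,v_i)$. The cone volume is $3!\,\text{\rm Vol}_3=m_i$. The primitive normal is $(n_i',\,v_{i-1}-v_i,\,u_i-u_{i-1})/N_i'$, and $N_i'=\ell_i$ by Lemma~\ref{lemma041a}. Thus $d=m_i/\ell_i$, $\chi=\ell_i$, and the factor is $(1-t^{m_i/\ell_i})^{-\ell_i}$.

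\emph{The edge $\Delta_i$.} As computed in Lemma~\ref{lemma43a}, it gives $(1-t^{m_i/\ell_i})^{\ell_i}$, which cancels the factor of $\Delta_i^{(3)}$. The total product is therefore $1$.

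The main obstacle is justifying that these normal vectors are the correct \emph{positive} primitive vectors entering Theorem~\ref{thmvar}, that is, that $n_i'>0$, and that their face sets are exactly the listed simplices. For this I would use the defining inequality of Case~(c) from the proof of Lemma~\ref{lemma43z}: the vertex ${\bf u}_3$ lies strictly below the hyperplane of $\Delta_i^{(1,2)}$ when $i\le c$. I would translate this into the sign of $n_i'=m_i-\kappa(u_i-u_{i-1})$, or alternatively deduce positivity directly from $\Delta_i^{(1,3)}$ being a compact face of $\Gamma(f)$, so that its supporting normal has all coordinates positive. The remaining verifications of primitivity reduce to the gcd computations already recorded in Lemma~\ref{lemma041a}.
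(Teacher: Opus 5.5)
Your proposal is correct and follows the paper's proof essentially verbatim. It uses the same four factors from Theorem~\ref{thmvar}, the same normal vectors $Q_i^{(1,3)}$ and $Q_i^{(3)}$, and the same cancellation via $N_i''=M_i$ and $N_i'=\ell_i$ from Lemma~\ref{lemma041a}. Your concern about the positivity of $n_i'$ is a point the paper leaves implicit, and your second route settles it: $n_i'>0$ follows because $\Delta_i^{(1,3)}$ is a compact face by Lemma~\ref{lemma43z}.
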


\begin{proof}
For $\Delta_i^{(1,3)}$,
since $4!\,\text{\rm Vol}_4(\text{\rm Cone}(\Delta_i^{(1,3)},0))=2m_i$
and the primitive vector $Q_i^{(1,3)}$ orthogonal to $\Delta_i^{(1,3)}$ is 
\[
Q_i^{(1,3)}
=\frac{1}{N_i''}\begin{pmatrix} m_i \\ 2n_i' \\ 2(v_{i-1}-v_i) \\ 2(u_i-u_{i-1}) \end{pmatrix},
\]
it follows that
\[
d(Q_i^{(1,3)}; f)=\frac{2m_i}{N_i''}
\quad \text{and} \quad
\chi(Q_i^{(1,3)})=\frac{(-1)^{4-1} 2m_i}{d(Q_i^{(1,3)};f)}=-N_i''.
\]
Hence the factor is $\displaystyle (1-t^{2m_i/N_i''})^{N_i''}$ by Theorem~\ref{thmvar}.
The calculation of the factor for $\Delta_i^{(1)}$ is exactly same as the one in the proof of Lemma~\ref{lemma43a}. Therefore, the corresponding factor is $(1-t^{2m_i/M_i})^{-M_i}$.
For $\Delta_i^{(3)}$, since $3!\,\text{\rm Vol}_3(\text{\rm Cone}(\Delta_i^{(3)},0))=m_i$ and the primitive vector $Q_i^{(3)}$ orthogonal to $\Delta_i^{(3)}$ is 
\[
Q_i^{(3)}
=\frac{1}{N_i'}\begin{pmatrix} n_i' \\ v_{i-1}-v_i \\ u_i-u_{i-1} \end{pmatrix},
\]
it follows that 
\[
d(Q_i^{(3)};f(0,y,z,w))=\frac{m_i}{N_i'}
\quad \text{and} \quad
\chi(Q_i^{(3)})=\frac{(-1)^{3-1} m_i}{d(Q_i^{(3)};f(0,y,z,w))}=N_i'.
\]
Hence the factor is $(1-t^{m_i/N_i'})^{-N_i'}$.
The calculation of the factor for $\Delta_i$ is again exactly same as the one in the proof of Lemma~\ref{lemma43a}. Therefore, the corresponding factor is $(1-t^{m_i/\ell_i})^{\ell_i}$.

The product $\zeta_{f,\Delta_i}(t)$ of the four factors obtained above is
\[
 \zeta_{f,\Delta_i}(t)
=\frac{(1-t^{2m_i/N_i''})^{N''_i}(1-t^{m_i/\ell_i})^{\ell_i}}{(1-t^{2m_i/M_i})^{M_i}(1-t^{m_i/N_i'})^{N_i'}}.
\]
By Lemma~\ref{lemma041a}, $N_i'=\ell_i$ and $N_i''=M_i$ hold.
Therefore, we obtain $\zeta_{f,\Delta_i}(t)=1$.
\end{proof}

\subsection{Factors from the faces in Case~(d)}

Assume that ${\bf v}_N\cap X_3$ is non-empty. In this case, the following faces of $\Gamma(f)$ 
contain ${\bf v}_N\cap X_3$ and lie on the subspace spanned by $(X_1,X_2,X_3)$ (see~Figure~\ref{fig3}):
\begin{itemize}
\item the convex hull of ${\bf u}_1$, ${\bf u}_2$, and ${\bf v}_N$,  which is a $2$-simplex, denoted by ${\bf v}_N^{(1,2)}$;
\item the convex hull of ${\bf u}_1$ and ${\bf v}_N$,  which is a $1$-simplex, denoted by ${\bf v}_N^{(1)}$;
\item the convex hull of ${\bf u}_2$ and ${\bf v}_N$,  which is a $1$-simplex, denoted by ${\bf v}_N^{(2)}$;
\item the vertex ${\bf v}_N$.
\end{itemize}

\begin{figure}[htbp]
\includegraphics[scale=0.65, bb=129 516 362 707]{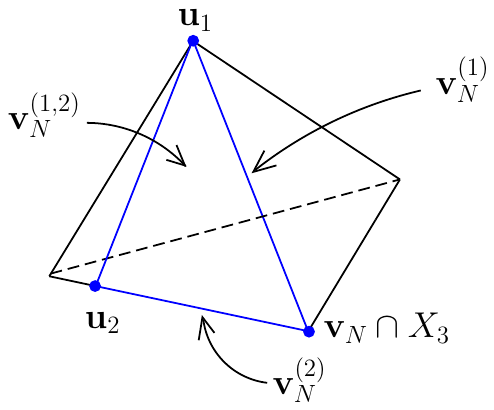}
\caption{Faces in Case~(d)}
\label{fig3}
\end{figure}

\begin{lemma}\label{lemma44}
Assume that ${\bf v}_N\cap X_3$ is non-empty.
The product $\zeta_{f,{\bf v}_N\cap X_3}(t)$ of the factors of $\zeta_f(t)$ corresponding to the faces in Case~(d) is 
\[
   \zeta_{f,{\bf v}_N\cap X_3}(t)
=\frac{1-t^{u_N}}{1-t^{2u_N}}.
\]
\end{lemma}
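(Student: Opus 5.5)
The plan is to compute each face's factor via Theorem~\ref{thmvar}, exactly as in Lemmas~\ref{lemma43a}--\ref{lemma43c}, and then multiply. The vertex ${\bf v}_N\cap X_3$ is nonempty exactly when $v_N=0$, so ${\bf v}_N=(0,0,u_N,0)$ corresponds to a monomial $z^{u_N}$ of $h$. All the relevant faces lie in the coordinate subspaces contained in the span of $(X_1,X_2,X_3)$, and we work there with the restrictions $f(x,y,z,0)=x^2+y^2z+bz^{u_N}+\cdots$ and their further restrictions.

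First, the $2$-simplex ${\bf v}_N^{(1,2)}$ with vertices $(2,0,0)$, $(0,2,1)$, $(0,0,u_N)$.
\begin{itemize}
\item The cone volume is $3!\,\text{\rm Vol}_3=\det\begin{pmatrix}2&0&0\\0&2&0\\0&1&u_N\end{pmatrix}=4u_N$.
\item The normal vector is proportional to $(u_N,u_N-1,2)$, with $d=2u_N$.
\item If $u_N$ is odd, this vector is primitive and $\chi=4u_N/(2u_N)=2$.
\item If $u_N$ is even, the primitive vector is $(u_N/2,(u_N-1)/2,1)$, which is not integral. So the primitive vector is $(u_N,u_N-1,2)$ in all cases, since $\gcd(u_N,u_N-1,2)=1$.
\end{itemize}
Hence the factor is $(1-t^{2u_N})^{-2}$.

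Next, the edge ${\bf v}_N^{(1)}$ from $(2,0)$ to $(0,u_N)$ in the $(X_1,X_3)$-plane.
\begin{itemize}
\item Here $2!\,\text{\rm Vol}_2=2u_N$.
\item The primitive normal is $(u_N,2)/g$ with $g=\gcd(u_N,2)$, and $d=2u_N/g$.
\item Then $\chi=-g$, giving the factor $(1-t^{2u_N/g})^{g}$.
\end{itemize}

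Then the edge ${\bf v}_N^{(2)}$ from $(2,1)$ to $(0,u_N)$ in the $(X_2,X_3)$-plane.
\begin{itemize}
\item Here $2!\,\text{\rm Vol}_2=2u_N$.
\item The normal is proportional to $(u_N-1,2)$, whose gcd is $g'=\gcd(u_N-1,2)$, and $d=2u_N/g'$.
\item This gives the factor $(1-t^{2u_N/g'})^{g'}$.
\end{itemize}

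Finally, the vertex $(u_N)$ on the $X_3$-axis (the one-variable function $z^{u_N}$) contributes $(1-t^{u_N})^{-1}$. The sign convention here needs checking against Theorem~\ref{thmvar} with $|I|=1$, where $\chi=(-1)^0\cdot u_N/u_N=1$.

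Multiplying, exactly one of $g,g'$ equals $2$ and the other equals $1$, because $u_N$ and $u_N-1$ have opposite parity. The two edge factors therefore combine to $(1-t^{u_N})^2(1-t^{2u_N})$ in either case. The total product is
\[
\frac{(1-t^{u_N})^2(1-t^{2u_N})}{(1-t^{2u_N})^{2}(1-t^{u_N})}=\frac{1-t^{u_N}}{1-t^{2u_N}},
\]
as asserted.

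The main obstacle is the sign and normalization bookkeeping: getting the exponents $-\chi$ right in each dimension $|I|$, and verifying that these are exactly the faces that appear in the relevant $\mathcal S_I$. In particular we must check that ${\bf v}_N^{(1,2)}$ is genuinely a $2$-dimensional compact face of $\Gamma_+(f(x,y,z,0))$, i.e.\ that no other monomial of $h(z,0)$ lies below it. This holds because $h(z,0)$ has lowest term $z^{u_N}$ and the remaining monomials $x^2$ and $y^2z$ are the vertices themselves. The monomial $yw^\kappa$ vanishes when $w=0$, so the value of $a$ does not affect this case.
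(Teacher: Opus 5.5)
Your proposal is correct and follows the paper's proof essentially verbatim. You use the same four faces ${\bf v}_N^{(1,2)}$, ${\bf v}_N^{(1)}$, ${\bf v}_N^{(2)}$ and the vertex, the same cone volumes and primitive normals, and the same observation that $\{\gcd(u_N,2),\gcd(u_N-1,2)\}=\{1,2\}$ to collapse the product. The only blemish is the muddled aside about $u_N$ even in your first bullet list, which you can simply delete, because $\gcd(u_N,u_N-1)=1$ already shows that $(u_N,u_N-1,2)$ is primitive for every $u_N$.
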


\begin{proof}
For ${\bf v}_N^{(1,2)}$, since $3!\,\text{\rm Vol}_3(\text{\rm Cone}({\bf v}_N^{(1,2)},0))=4u_N$ and the primitive vector $Q_{{\bf v}_N}^{(1,2)}$ orthogonal to ${\bf v}_N^{(1,2)}$ is 
\[
\displaystyle 
Q_{{\bf v}_N}^{(1,2)}
=\begin{pmatrix} u_N \\ u_N-1 \\ 2 \end{pmatrix},
\]
it follows that 
\[
d(Q_{{\bf v}_N}^{(1,2)};f(x,y,z,0))=2u_N
\quad\text{and}\quad
\chi(Q_{{\bf v}_N}^{(1,2)})=\frac{(-1)^{3-1} 4u_N}{d(Q_{{\bf v}_N}^{(1,2)};f(x,y,z,0))}=2.
\]
Hence the factor is $(1-t^{2u_N})^{-2}$.
For ${\bf v}_N^{(1)}$, since $2!\,\text{\rm Vol}_2(\text{\rm Cone}({\bf v}_N^{(1)},0))=2u_N$ and the primitive vector $Q_{{\bf v}_N}^{(1)}$ orthogonal to ${\bf v}_N^{(1)}$ is 
\[
\displaystyle
Q_{{\bf v}_N}^{(1)}
=\frac{1}{\gcd(u_N,2)}\begin{pmatrix} u_N \\ 2 \end{pmatrix},
\]
it follows that
\[
d(Q_{{\bf v}_N}^{(1)};f(x,0,z,0))=\frac{2u_N}{\gcd(u_N,2)}
\quad \text{and} \quad
\chi(Q_{{\bf v}_N}^{(1)})=\frac{(-1)^{2-1} 2u_N}{d(Q_{{\bf v}_N}^{(1)};f(x,0,z,0))}=-\gcd(u_N,2).
\]
Hence the factor is $(1-t^{2u_N/\gcd(u_N,2)})^{\gcd(u_N,2)}$.
For ${\bf v}_N^{(2)}$, since $2!\,\text{\rm Vol}_2(\text{\rm Cone}({\bf v}_N^{(2)},0))=2u_N$ and the primitive vector $Q_{{\bf v}_N}^{(2)}$ orthogonal to ${\bf v}_N^{(2)}$ is 
\[
\displaystyle 
Q_{{\bf v}_N}^{(2)}
=\frac{1}{\gcd(u_N-1,2)}\begin{pmatrix} u_N-1 \\ 2 \end{pmatrix},
\]
it follows that
\[
d(Q_{{\bf v}_N}^{(2)}
;f(0,y,z,0))=\frac{2u_N}{\gcd(u_N-1,2)}
\quad \text{and}\quad
\chi(Q_{{\bf v}_N}^{(2)})=\frac{(-1)^{2-1} 2u_N}{d(Q_{{\bf v}_N}^{(2)};f(0,y,z,0))}=-\gcd(u_N-1,2).
\]
Hence the factor is $(1-t^{2u_N/\gcd(u_N-1,2)})^{\gcd(u_N-1,2)}$.
The factor corresponding to the vertex ${\bf v}_N\cap X_3$ is $(1-t^{u_N})^{-1}$.

The product $\zeta_{f,{\bf v}_N\cap X_3}(t)$ of the four factors obtained above is
\[
   \zeta_{f,{\bf v}_N\cap X_3}(t)
=\frac{(1-t^{2u_N/\gcd(u_N,2)})^{\gcd(u_N,2)}(1-t^{2u_N/\gcd(u_N-1,2)})^{\gcd(u_N-1,2)}}{(1-t^{2u_N})^2(1-t^{u_N})}
=\frac{1-t^{u_N}}{1-t^{2u_N}},
\]
where we used the fact that $(\gcd(u_N,2), \gcd(u_N-1,2))$ is either $(1,2)$ or $(2,1)$.
Thus, the assertion follows.
\end{proof}

\subsection{Factors from the faces in Case (e)}

Assume that ${\bf v}_0\cap X_4$ is non-empty. 
In this case, the following faces of $\Gamma(f)$ contain ${\bf v}_0\cap X_4$ and lie on the subspace spanned by $(X_1,X_2,X_4)$ (see~Figure~\ref{fig4}):
\begin{itemize}
\item the convex hull of ${\bf u}_1$, ${\bf u}_3$ and ${\bf v}_0$ in the case $a\ne 0$, which is a $2$-simplex, denoted by ${\bf v}_0^{(1,3)}$;
\item the convex hull of ${\bf u}_1$ and ${\bf v}_0$, which is a $1$-simplex, denoted by ${\bf v}_0^{(1)}$;
\item the convex hull of ${\bf u}_3$ and ${\bf v}_0$ in the case $a\ne 0$, which is a $1$-simplex, denoted by ${\bf v}_0^{(3)}$;
\item the vertex ${\bf v}_0$.
\end{itemize}

\begin{figure}[htbp]
\includegraphics[scale=0.65, bb=129 549 363 707]{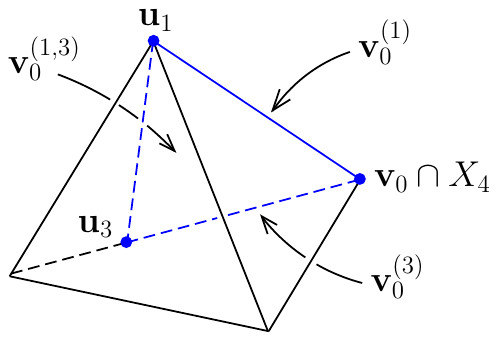}
\caption{Faces in Case~(e)}
\label{fig4}
\end{figure}

\begin{lemma}\label{lemma43}
Assume that ${\bf v}_0\cap X_4$ is non-empty.
The product $\zeta_{f,{\bf v}_0\cap X_4}(t)$ of the factors of $\zeta_f(t)$ corresponding to the faces in Case~(e) is given as follows:
\[
   \zeta_{f,{\bf v}_0\cap X_4}(t)
=
\begin{cases}
\displaystyle 
\frac{1-t^{2v_0}}{1-t^{v_0}} & \text{if $a=0$ and $v_0$ is odd,} 
\vspace{3mm}
\\
\vspace{3mm}
\displaystyle 
1-t^{v_0} & \text{if $a=0$ and $v_0$ is even,} 
\\
1 & \text{if $a\ne 0$.}
\end{cases}
\]
\end{lemma}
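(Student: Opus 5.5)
Following the proof of Lemma~\ref{lemma44}, we compute the factor contributed by each face containing ${\bf v}_0\cap X_4$ via Theorem~\ref{thmvar}, and then multiply them. Here ${\bf v}_0=(0,0,0,v_0)$ corresponds to the term $w^{v_0}$ of $h$. All relevant faces lie in the subspace spanned by $(X_1,X_2,X_4)$, i.e.\ they come from the restriction $f(x,y,0,w)=x^2+ayw^\kappa+bw^{v_0}$ with $b\ne0$.

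\textbf{Case $a=0$.} Only two faces occur, the edge ${\bf v}_0^{(1)}$ and the vertex ${\bf v}_0$ (there is no ${\bf u}_3$, and ${\bf u}_2$ has $X_3=1$).
\begin{itemize}
\item For ${\bf v}_0^{(1)}$, the cone volume is $2!\,\mathrm{Vol}_2=2v_0$. The primitive normal is $\frac{1}{\gcd(v_0,2)}{}^t(v_0,2)$, so $d=2v_0/\gcd(v_0,2)$ and $\chi=-\gcd(v_0,2)$. The factor is therefore $(1-t^{2v_0/\gcd(v_0,2)})^{\gcd(v_0,2)}$.
\item The vertex contributes $(1-t^{v_0})^{-1}$.
\end{itemize}
If $v_0$ is odd, the product is $(1-t^{2v_0})/(1-t^{v_0})$. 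If $v_0$ is even, it is $(1-t^{v_0})^2/(1-t^{v_0})=1-t^{v_0}$. These are the first two cases of the statement.

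\textbf{Case $a\ne0$.} Here $v_0\ge 2\kappa$ is not automatic, so we first have to settle the shape. By Lemma~\ref{lemma21}(2), isolatedness gives $w^2\nmid h$. Hence either ${\bf v}_0=(0,v_0)$ with $v_0$ the $w$-order of $h(0,w)$, or $(u_0,v_0)$ has $u_0\ge1$, in which case ${\bf v}_0\cap X_4$ is empty. So we may assume $u_0=0$.

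We must justify that the $2$-simplex ${\bf v}_0^{(1,3)}$, the edges ${\bf v}_0^{(1)}$, ${\bf v}_0^{(3)}$ and the vertex are faces of $\Gamma(f)$, as asserted before the lemma. We will take this configuration as given, as it follows from Lemma~\ref{lemma43z}.

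The planar Newton polygon in $(X_1,X_2,X_4)$ then has vertices $(2,0,0)$, $(0,1,\kappa)$ and $(0,0,v_0)$. We compute the four factors.
\begin{itemize}
\item For ${\bf v}_0^{(1,3)}$, the volume determinant is $\det\begin{pmatrix}2&0&0\\0&1&\kappa\\0&0&v_0\end{pmatrix}=2v_0$. The normal is proportional to ${}^t(v_0,\,2(v_0-\kappa),\,2)$, which is already primitive since its last entry is $2$ and its first is $v_0$: take $g=\gcd(v_0,2)$ and normal ${}^t(v_0,2(v_0-\kappa),2)/g$. Then $d=2v_0/g$, $\chi=g$, and the factor is $(1-t^{2v_0/g})^{-g}$.
\item For ${\bf v}_0^{(1)}$, the factor is $(1-t^{2v_0/g})^{g}$ as in the case $a=0$.
\item For ${\bf v}_0^{(3)}$, in the $(X_2,X_4)$-plane the cone determinant is $\det\begin{pmatrix}1&0\\ \kappa&v_0\end{pmatrix}=v_0$. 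The normal is ${}^t(v_0-\kappa,1)$, which is primitive, so $d=v_0$, $\chi=-1$, and the factor is $(1-t^{v_0})$.
\item The vertex contributes $(1-t^{v_0})^{-1}$.
\end{itemize}
The four factors cancel pairwise, giving $1$.

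The main obstacle is the case $a\ne0$. There we need the exact face structure near ${\bf v}_0$ (in particular that ${\bf u}_3$ and ${\bf v}_0$ span an edge and that ${\bf v}_0^{(1,3)}$ is a genuine $2$-face), and we need to check primitivity of the normals with the right gcd. Once this is in place, the cancellation is routine, in the same way as in Lemma~\ref{lemma43c}.
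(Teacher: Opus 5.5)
Your proposal is correct and follows the paper's proof essentially line by line: it uses the same four factors from Theorem~\ref{thmvar} for ${\bf v}_0^{(1,3)}$, ${\bf v}_0^{(1)}$, ${\bf v}_0^{(3)}$ and the vertex, with the same normals and gcd's, and the same pairwise cancellation when $a\neq 0$. Like the paper, you take the face configuration of Case~(e) as given; for $a\neq 0$ the condition this tacitly uses is $v_0>\kappa$, not $v_0\ge 2\kappa$ (it is what makes ${\bf u}_3$ a vertex and the normals ${}^t(v_0,2(v_0-\kappa),2)$ and ${}^t(v_0-\kappa,1)$ positive), and the appeal to Lemma~\ref{lemma21}(2) is unnecessary because ${\bf v}_0\cap X_4\neq\emptyset$ already means $u_0=0$.
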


\begin{proof}
For ${\bf v}_0^{(1,3)}$, since $3!\,\text{\rm Vol}_3(\text{\rm Cone}({\bf v}_0^{(1,3)},0))=2v_0$ and the primitive vector $Q_{{\bf v}_0}^{(1,3)}$ orthogonal to ${\bf v}_0^{(1,3)}$ is 
\[
Q_{{\bf v}_0}^{(1,3)}
=\frac{1}{\gcd(v_0,2)}\begin{pmatrix} v_0 \\ 2(v_0-\kappa) \\ 2 \end{pmatrix},
\]
it follows that 
\[
d(Q_{{\bf v}_0}^{(1,3)};f(x,y,0,w))=\frac{2v_0}{\gcd(v_0,2)}
\quad\text{and}\quad
\chi(Q_{{\bf v}_0}^{(1,3)})=\frac{(-1)^{3-1} 2v_0}{d(Q_{{\bf v}_0}^{(1,3)};f(x,y,0,w))}=\gcd(v_0,2).
\]
Hence the factor is $(1-t^{2v_0/\gcd(v_0,2)})^{-\gcd(v_0,2)}$.
For ${\bf v}_0^{(1)}$, since $2!\,\text{\rm Vol}_2(\text{\rm Cone}({\bf v}_0^{(1)},0))=2v_0$ and the primitive vector $Q_{{\bf v}_0}^{(1)}$ orthogonal to ${\bf v}_0^{(1)}$ is 
\[
Q_{{\bf v}_0}^{(1)}
=\frac{1}{\gcd(v_0,2)}\begin{pmatrix} v_0 \\ 2 \end{pmatrix},
\]
it follows that 
\[
d(Q_{{\bf v}_0}^{(1)};f(x,0,0,w))=\frac{2v_0}{\gcd(v_0,2)}
\quad\text{and}\quad
\chi(Q_{{\bf v}_0}^{(1)})=\frac{(-1)^{2-1} 2v_0}{d(Q_{{\bf v}_0}^{(1)};f(x,0,0,w))}=-\gcd(v_0,2).
\]
Hence the factor is $(1-t^{2v_0/\gcd(v_0,2)})^{\gcd(v_0,2)}$.
For ${\bf v}_0^{(3)}$, since $2!\,\text{\rm Vol}_2(\text{\rm Cone}({\bf v}_0^{(3)},0))=v_0$ and the primitive vector $Q_{{\bf v}_0}^{(3)}$ orthogonal to ${\bf v}_0^{(3)}$ is 
\[
Q_{{\bf v}_0}^{(3)}
=\begin{pmatrix} v_0-\kappa \\ 1 \end{pmatrix},
\]
it follows that
\[
d(Q_{{\bf v}_0}^{(3)};f(0,y,0,w))=v_0
\quad\text{and}\quad
\chi(Q_{{\bf v}_0}^{(3)})=\frac{(-1)^{2-1} v_0}{d(Q_{{\bf v}_0}^{(3)};f(0,y,0,w))}=-1.
\]
Hence the factor is $1-t^{v_0}$.
The factor corresponding to the vertex ${\bf v}_0\cap X_4$ is $(1-t^{v_0})^{-1}$.
Thus, the assertion follows.
\end{proof}

\subsection{Factors from the face in Case (f)}\label{sec4-8}

The factor of $\zeta_f(T)$ corresponding to the vertex ${\bf u}_1$ is $(1-t^2)^{-1}$.

\subsection{Proof of Theorem~\ref{thm2}}

\begin{proof}[Proof of Theorem~\ref{thm2}]
Let $F_f$ be the Milnor fiber of the singularity $(f,O)$,
$T$ be the monodromy matrix of $H_3(F_f)$,
and $I$ be the identity matrix.
Then, the short exact sequence
\[   
  0\longrightarrow H_3(K_f)
  \longrightarrow H_3(F_f)
  \overset{I-T}\longrightarrow H_3(F_f)
  \longrightarrow H_2(K_f)
  \longrightarrow 0
\]
implies that 
\[
   H_3(K_f)\cong\ker (I-T)\quad\text{and}\quad H_2(K_f)\cong\text{\rm coker}(I-T),
\]
as mentioned in the proof of Theorem~\ref{thm1}.
In particular, the rank of $H_2(K_f)$ equals to the multiplicity of the factor $(1-t)$ in the characteristic polynomial $\det(I-tT)$. Note that $\det(I-tT)=(1-t)\zeta_f(t)$.

The contributions of the factors of $\zeta_f(t)$ to the multiplicity of $(1-t)$ are as follows:
\begin{itemize}
\item In Case (a), the contribution is $\sum_{i}\ell_i$, 
where the summation runs over all indices $i$ from $c+1$ to $N$ such that $(v_{i-1}-v_i)/\ell_i$ is even by Lemma~\ref{lemma43a}.
\item In Case (b), the contribution is $M=\gcd(2\kappa-v_{c}, u_{c}+1)$ if $a\ne 0$ and $M'=2M$ by Lemma~\ref{lemma43b}.
\item In Case (e), the contribution is $1$ if $a=0$, ${\bf v}_0\cap X_4$ is non-empty, and $v_0$ is even by Lemma~\ref{lemma43}.
\item In Case (f), the contribution is $-1$ as mentioned in Section~\ref{sec4-8}.
\item There are no other contributions by these lemmas and Lemmas~\ref{lemma43c} and~\ref{lemma44}.
\end{itemize}
The condition in Case~(a) is
equivalent to the condition on the indices in the summation in Theorem~\ref{thm2}, see Remark~\ref{rem43}.
The condition on $\varepsilon_1$ in Theorem~\ref{thm2} arises from Case~(b).
As mentioned in Remark~\ref{rem4-7},
$M'=2M$ if and only if $\nu_2(2\kappa-{v_{c}})>\nu_2(u_c+1)$.
Hence, we obtain the condition on $\varepsilon_1$ stated in Theorem~\ref{thm2}.
The condition on $\varepsilon_2$ in Theorem~\ref{thm2} arises from Case~(e).

Adding the contribution $1$ of the factor $(1-t)$ in $\det(I-tT)=(1-t)\zeta_f(t)$, we obtain the formula in the assertion.
\end{proof}

\begin{remark}\label{rem410}
Since $M=\gcd(2\kappa-v_{c}, u_{c}+1)$ and
\[
\begin{split}
   M'&=\gcd(2\kappa-v_{c}, 2(u_{c}+1))
   =\gcd(2\kappa u_c+v_{c}, 2\kappa-v_c, 2(u_{c}+1)) \\
   &=\gcd((2\kappa-v_{c})u_c+(u_c+1)v_c, 2\kappa-v_c, 2(u_{c}+1))
   =\gcd((u_c+1)v_c, 2\kappa-v_c, 2(u_{c}+1)),
\end{split}
\]
if $v_c$ is odd, then $M'=M$. 
\end{remark}

\subsection{An example}

\begin{example}
Consider an isolated cDV singularity of type $cD_6$ given by
\[
   f(x,y,z,w)=x^2+y^2z+z^7+z^3w^2+z^2w^3+w^7.
\]
The Newton boundary $\Gamma(h)$ 
of $h(z,w)=z^7+z^3w^2+z^2w^3+w^7$ has four vertices ${\bf v}_0=(0,7)$, ${\bf v}_1=(2,3)$, ${\bf v}_2=(3,2)$, ${\bf v}_3=(7,0)$.
Since $a=0$ in the form~\eqref{eq1-1} and the power of the term $w^7$ is odd,
we have $c=0$ and $\varepsilon_1=\varepsilon_2=0$ in Theorem~\ref{thm2}.
In particular, all $\Delta_i$ belong to Case~(A) in Lemma~\ref{lemma43z}.
Since
$(u_1-u_0,v_0-v_1)=(2,4)$,
$(u_2-u_1,v_1-v_2)=(1,1)$, and
$(u_3-u_2,v_2-v_3)=(4,2)$,
the inequality $\nu_2(v_{i-1}-v_i)>\nu_2(\ell_i)$ holds only when $i=1$.
Therefore, $\text{\rm rank\,}H_2(K_f)=\ell_1=\gcd(2,4)=2$ by Theorem~\ref{thm2}.
 
The zeta function $\zeta_f(t)$ can be determined by applying the lemmas in this section.
Since $a=0$ in the form~\eqref{eq1-1} and the terms $z^7$ and $w^7$ exist, 
the faces of $\Gamma(f)$ belong to Cases~(a), (d), (e) and (f).
It follows from Lemma~\ref{lemma43a} that
the factors corresponding to the faces in Case~(a) are
\[
   \zeta_{f, \Delta_1}(t)=(1-t^7)^2,
   \quad 
   \zeta_{f, \Delta_2}(t)=\frac{1-t^{10}}{1-t^5}
   \quad \text{and}\quad 
   \zeta_{f, \Delta_3}(t)=\frac{(1-t^{14})^2}{(1-t^7)^2}.
\]
The factors corresponding to the faces in Case~(d) and in Case~(e) are
\[
\zeta_{f,{\bf v}_3\cap X_3}(t)=\frac{1-t^7}{1-t^{14}}
   \quad \text{and}\quad 
\zeta_{f,{\bf v}_0\cap X_4}(t)=\frac{1-t^{14}}{1-t^7}
\]
by Lemma~\ref{lemma44} and by Lemma~\ref{lemma43}, respectively.
Taking the product of these factors together with the factor $(1-t^2)^{-1}$ corresponding to the face in Case~(f), we obtain
\[
   \zeta_f(t)=\frac{(1-t^{10})(1-t^{14})^2}{(1-t^5)(1-t^2)}.
\]
The Milnor number $\mu_f$ of $(f,O)$ is
$\mu_f=\deg \det(I-tT)=\deg\zeta_f(t)+1=32$.
\end{example}

\section{Weighted homogeneous case}\label{sec5}

The torsion subgroup of $H_{k-2}(K_f)$ for weighted homogeneous polynomials is discussed in~\cite{Orl72}.
A singularity $(f,O)$ is said to be {\it weighted homogeneous} if $f$ is weighted homogeneous.
If $(f,O)$ is weighted homogeneous and isolated,
then there exists a positive integer $d$ such that $T^d-I=0$,
where $T$ is the monodromy matrix of the homological monodromy $H_{k-1}(F_f)\to H_{k-1}(F_f)$
associated with the Milnor fibration of $(f,O)$.
This property is used to study the torsion subgroup of $H_{k-2}(K_f)$ 
via a decomposition of the characteristic polynomial $\det(I-tT)$ into cyclotomic polynomials.
Therefore, we restrict our attention to the case where an isolated cDV singularity $(f,O)$ is weighted homogeneous.
In this case, the singularity is always Newton non-degenerate.
Furthermore, we assume that $(f,O)$ is a Thom-Sebastiani sum of Brieskorn-Pham, cyclic, or chain type singularities, since we need to apply Orlik's conjecture as explained in the introduction.

\begin{definition}\label{definition5-1}
A singularity $(f,O)$ is said to be of {\it Brieskorn-Pham}, {\it cyclic} and {\it chain type}
 if $f$ is of the form
\[
\begin{split}
   f(x_1,x_2,\ldots, x_k)&=x_1^{a_1}+x_2^{a_2}+\cdots +x_k^{a_k}, \\
   f(x_1,x_2,\ldots, x_k)&=x_1^{a_1}x_2+x_2^{a_2}x_3+\cdots +x_{k-1}^{a_{k-1}}x_k+x_k^{a_k}x_1, \\
   f(x_1,x_2,\ldots, x_k)&=x_1^{a_1}x_2+x_2^{a_2}x_3+\cdots +x_{k-1}^{a_{k-1}}x_k+x_k^{a_k},
\end{split}
\]
respectively, where $a_1,a_2,\ldots, a_k$ are integers with $a_i\geq 2$.
A singularity $(F,O)$ is called the {\it Thom-Sebastiani sum} of two singularities $(f,O)$ and $(g,O)$ if
$F$ is given as
\[
   F(x_1,\ldots,x_k,y_1,\ldots,y_{k'})=f(x_1,\ldots,x_k)+g(y_1,\ldots,y_{k'}),
\]
where $f:\C^k\to\C$, $g:\C^{k'}\to\C$, and $F:\C^{k+{k'}}\to\C$.
\end{definition}

Note that Brieskorn-Pham, cyclic, and chain type singularities,
and their Thom-Sebastiani sums are weighted homogeneous.

Let $(f,O)$ be a weighted homogeneous, isolated cDV singularity given in the form~\eqref{eq1-1}.
In particular, $(f,O)$ is Newton non-degenerate.
Therefore, we can assume that $h(z,w)$ is in the form $bz^pw^q+cz^rw^s$ for $b, c\in\C$.
If $a=0$, then neither $z$ nor $w^2$ divides $h(z,w)$ by Lemma~\ref{lemma21}~(1).
Therefore, $f(x,y,z,w)$ is in the form
\begin{equation}\label{eq51}
      f(x,y,z,w)=x^2+y^2z+z^pw^q+w^s,
\end{equation}
where $p+q\geq 4$, $q\in \{0,1\}$, and $s\geq 4$.
If $q=0$, then $(f,O)$ is the Thom-Sebastiani sum of the Brieskorn-Pham singularity $(x^2+w^s, O)$ and 
the chain type singularity $(y^2z+z^p,O)$.
If $q=1$, then $f$ is the Thom-Sebastiani sum of the Brieskorn-Pham singularity $(x^2, O)$ and 
the chain type singularity $(y^2z+z^pw+w^s,O)$.

If $a\ne 0$, then the condition for $(f,O)$ to be weighted homogeneous is given as follows.

\begin{lemma}\label{lemma52}
Let $(f,O)$ be a singularity given in the form
\begin{equation}\label{eq_lemma52}
      f(x,y,z,w)=x^2+y^2z+ayw^\kappa+bz^pw^q+cz^rw^s.
\end{equation}
Assume that $a,b,c\ne 0$. Then, $(f,O)$ is weighted homogeneous if and only if $2(p-r)\kappa=(p+1)s-q(r+1)$ holds.
\end{lemma}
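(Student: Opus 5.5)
The plan is to reduce weighted homogeneity to the existence of a positive weight vector $P={}^t(p_1,p_2,p_3,p_4)$ and a degree $d$ such that all five monomials $x^2$, $y^2z$, $yw^\kappa$, $z^pw^q$, $z^rw^s$ have $P$-degree $d$. Since $a,b,c\neq 0$, every one of these monomials actually occurs, so $f$ is weighted homogeneous if and only if the five exponent vectors
\[
(2,0,0,0),\ (0,2,1,0),\ (0,1,0,\kappa),\ (0,0,p,q),\ (0,0,r,s)
\]
lie on a common affine hyperplane $p_1X_1+p_2X_2+p_3X_3+p_4X_4=d$ with positive rational weights. Rescaling then gives a primitive integer vector.

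I will solve the resulting linear system directly. The equations are
\[
2p_1=d,\quad 2p_2+p_3=d,\quad p_2+\kappa p_4=d,\quad pp_3+qp_4=d,\quad rp_3+sp_4=d.
\]
The first equation fixes $p_1=d/2$. The second and third express $p_3=d-2p_2$ and $p_4=(d-p_2)/\kappa$ in terms of $p_2$.

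Substituting into the fourth equation gives
\[
p(d-2p_2)\kappa+q(d-p_2)=d\kappa,
\]
so
\[
p_2=\frac{d\,((p-1)\kappa+q)}{2p\kappa+q}.
\]
Substituting into the fifth equation gives, in the same way,
\[
p_2=\frac{d\,((r-1)\kappa+s)}{2r\kappa+s}.
\]
The system is therefore consistent if and only if these two expressions agree (after dividing by $d\neq 0$):
\[
((p-1)\kappa+q)(2r\kappa+s)=((r-1)\kappa+s)(2p\kappa+q).
\]
Expanding both sides, the $\kappa^2$ terms $2pr\kappa^2$ cancel and the constant terms $qs$ cancel. Dividing the remaining terms by $\kappa$ leaves
\[
2(r-p)\kappa+(p-1)s+q-(r-1)q-s=0\cdot\ldots,
\]
and rearranging yields exactly the condition $2(p-r)\kappa=(p+1)s-q(r+1)$. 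I will double-check this bookkeeping carefully, since a sign slip here is the most likely error.

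The remaining issue, which I expect to be the only real obstacle, is positivity: a consistent solution must also have $p_2,p_3,p_4>0$. Since $p_2=d-\kappa p_4$, this amounts to checking $d>2p_2$ and $d>p_2$. These should follow from the cDV constraints $\kappa\geq3$ and $p+q\geq4$. Concretely, $p_3>0$ is equivalent to $2((p-1)\kappa+q)<2p\kappa+q$, i.e.\ $q<2\kappa$, and $p_4>0$ is equivalent to $p_2<d$. If either inequality fails for some degenerate exponents, I would instead argue that the Newton boundary must still be contained in the given hyperplane, using the standing assumption that $f$ has the shape~\eqref{eq1-1}. Alternatively, I would note that the definition of weighted homogeneity in the paper only needs a primitive vector in $\Z_{\geq0}^4$, so nonnegativity suffices.

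With consistency and positivity established, both directions of the equivalence follow.
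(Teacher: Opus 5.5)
Your elimination is the paper's argument: the paper also requires the five exponent vectors to lie on one hyperplane and eliminates the weights. Your final condition is right, but the displayed intermediate line is not. After the $2pr\kappa^2$ terms cancel, the terms $-2r\kappa^2$ and $-2p\kappa^2$ remain, and dividing by $\kappa$ gives $-2r\kappa+(p-1)s+2qr=-2p\kappa+(r-1)q+2ps$. The bookkeeping is shorter if you eliminate $p_2$ instead of $p_4$, as the paper does. Then $p_2=d-\kappa p_4$ and $p_3=2\kappa p_4-d$, so the last two equations read $(2p\kappa+q)p_4=(p+1)d$ and $(2r\kappa+s)p_4=(r+1)d$. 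Their compatibility, $(2p\kappa+q)(r+1)=(2r\kappa+s)(p+1)$, expands at once to the stated condition.

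The genuine gap is your positivity step. Solving explicitly gives $p_4=d(p+1)/(2p\kappa+q)$, which is always positive, together with $p_3=d(2\kappa-q)/(2p\kappa+q)$ and $p_2=d((p-1)\kappa+q)/(2p\kappa+q)$. You correctly reduced $p_3>0$ to $q<2\kappa$, but this does \emph{not} follow from $\kappa\ge 3$ and $p+q\ge 4$. Take $\kappa=3$, $(p,q)=(0,8)$, $(r,s)=(1,10)$: the condition $2(p-r)\kappa=(p+1)s-q(r+1)$ holds ($-6=-6$). Yet the only solution up to scaling is $(p_1,p_2,p_3,p_4;d)=(4,5,-2,1;8)$, so $f$ is not weighted homogeneous, even with the nonnegative weights allowed by the paper's definition. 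Your fallback therefore fails as well, and the ``if'' direction is false as literally stated. The paper's one-line proof silently ignores this point.

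What rescues it is isolatedness, the standing hypothesis of Section~\ref{sec5}. If $w^2\mid h$, the line $x=y=w=0$ lies in the singular locus, so for isolated $(f,O)$ one of $q,s$ is at most $1$. The condition is symmetric under $(p,q)\leftrightarrow(r,s)$, so you may assume $q\le 1$; since $h$ has order $\ge 4$, this gives $p\ge 3$. Then $2\kappa-q>0$ and $(p-1)\kappa+q>0$, so $p_2,p_3>0$. With this hypothesis made explicit, your argument proves both directions.
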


\begin{proof}
From the condition that the five vertices $(2,0,0,0)$, $(0,2,1,0)$, $(0,1,0,\kappa)$, $(0,0,p,q)$, and $(0,0,r,s)$ lie on a hyperplane $\alpha_1X_1+\alpha_2X_2+\alpha_3X_3+\alpha_4X_4=\beta$, eliminating $\alpha_1$, $\alpha_2$ and $\alpha_3$ yields $(q+2p\kappa)(r+1)=(s+2r\kappa)(p+1)$, equivalently $2(p-r)\kappa=(p+1)s-q(r+1)$.
\end{proof}

Lemma~\ref{lemma52} implies that there are infinitely many weighted homogeneous singularities even when $a,b,c\ne 0$.
For example, one may set $p=r+1$ and choose $q$ and $s$ to be even.
A singularity $(f,O)$ given in the form~\eqref{eq_lemma52} with $a\ne 0$ is a Thom-Sebastiani sum of Brieskorn-Pham, cyclic, or chain type singularities only if either $b=0$ or $c=0$.
If $b=c=0$, then $(f,O)$ is non-isolated. 
Therefore, we may assume that $b\ne 0$ and $c=0$.
Furthermore, $q\in \{0,1\}$ by Lemma~\ref{lemma21}~(2).
In summary, $(f,O)$ is in the form
\begin{equation}\label{eq52}
      f(x,y,z,w)=x^2+ay^2z+yw^\kappa+bz^pw^q,
\end{equation}
where $\kappa\geq 3$, $p+q\geq 4$, $q\in\{0,1\}$, and $a,b\ne 0$.
If $q=0$, then $f$ is the Thom-Sebastiani sum of the Brieskorn-Pham singularity $(x^2, O)$ and 
the chain type singularity $(y^2z+ayw^\kappa+bz^p,O)$.
If $q=1$, then $f$ is the Thom-Sebastiani sum of the Brieskorn-Pham singularity $(x^2, O)$ and 
the cyclic type singularity $(y^2z+ayw^\kappa+bz^pw,O)$.

If $(f,O)$ is a Thom-Sebastiani sum of Brieskorn-Pham, cyclic, or chain type singularities,
then Orlik's conjecture holds by~\cite[Theorem 1.3 (a) and (c)]{HM22}.
Therefore, the torsion subgroup of $H_2(K_f)$ can be determined from the cyclotomic polynomials
appearing as factors of the characteristic polynomial of $f$.

\begin{theorem}\label{thm51}
Let $(f,O)$ be a weighted homogeneous, isolated cDV singularity of type $cD_n$ with $n>4$.
Assume that $(f,O)$ is a Thom-Sebastiani sum of Brieskorn-Pham, cyclic, or chain type singularities.
\begin{itemize}
\item[(1)] 
Suppose that $a=0$, where $(f,O)$ is given in the form~\eqref{eq51} with $p+q\geq 4$, $q\in \{0,1\}$, and $s\geq 4$. Then
\[
   H_2(K_f)\cong
   \begin{cases}
      \Z^{\ell+1} & \text{if $(s-q)/\ell$ and $s$ are even}\,;  \\ 
      \Z^{\ell} & \text{if $(s-q)/\ell$ is even and $s$ is odd}\,;  \\ 
      \Z\oplus \Z_2^{\ell+q-2} & \text{if $(s-q)/\ell$ is odd and  $s$ even}\,; \\
      \Z_2^{\ell+q-1} & \text{if $(s-q)/\ell$ and $s$ are odd,}
   \end{cases}
\]
where $\ell=\gcd(p, s-q)$.
\item[(2)]
Suppose that $a\ne 0$, where $(f,O)$ is given in the form~\eqref{eq52} with $\kappa\geq 3$, $p+q\geq 4$, $q\in \{0,1\}$, and $b\ne 0$.
Then.
\[
   H_2(K_f)\cong 
   \begin{cases}
   \Z_2^{M-1} & \text{if $q=1$}\,; \\
   \Z_2^{M-2} & \text{if $q=0$ and $M'=M$}\,; \\
   \Z^M & \text{if $q=0$ and $M'=2M$}, 
   \end{cases}
\]  
where $M=\gcd(2\kappa-q, p+1)$ and $M'=\gcd(2\kappa-q, 2(p+1))$.
\end{itemize}
\end{theorem}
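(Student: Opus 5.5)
The plan is to compute the characteristic polynomial $\Delta_f(t)=\det(I-tT)=(1-t)\zeta_f(t)$ explicitly from the lemmas of Section~4, and then read off $H_2(K_f)\cong\text{\rm coker}(I-T)$ from its cyclotomic decomposition. The second step uses Orlik's conjecture, which holds in our situation by~\cite[Theorem 1.3]{HM22} since $(f,O)$ is a Thom-Sebastiani sum of Brieskorn-Pham, cyclic, or chain type singularities.

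Concretely, write $\Delta_f(t)=\prod_k\Phi_k(t)^{a_k}$. Orlik's recipe gives
\[
H_2(K_f)\cong\Z^{a_1}\oplus\bigoplus_{j\ge1}\Z_{d_j},\qquad d_j=\prod_{k>1,\ a_k\ge j}\Phi_k(1).
\]
Since $\Phi_k(1)$ is a prime $\pi$ when $k=\pi^e$ and equals $1$ otherwise, I only need to know, for each prime power $k$, the multiplicity of $\Phi_k$. This multiplicity is the signed count of factors $(1-t^n)^{\pm}$ with $k\mid n$.

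For (1), where $a=0$, we have $h=z^pw^q+w^s$. Hence $N=1$, $c=0$, ${\bf v}_0=(0,s)$, ${\bf v}_1=(p,q)$, $\ell_1=\ell=\gcd(p,s-q)$ and $m_1=ps$. The factors of $\zeta_f(t)$ are the following:
\begin{itemize}
\item the Case~(a) factor of Lemma~\ref{lemma43a}, which is $(1-t^{ps/\ell})^{\ell}$ or $(1-t^{2ps/\ell})^\ell/(1-t^{ps/\ell})^\ell$ according to the parity of $(s-q)/\ell$;
\item the Case~(d) factor $(1-t^p)/(1-t^{2p})$ of Lemma~\ref{lemma44}, present only when $q=0$;
\item the Case~(e) factor of Lemma~\ref{lemma43}, which depends on the parity of $s$;
\item the factor $(1-t^2)^{-1}$ from Case~(f).
\end{itemize}
For (2), where $a\ne0$, the vertices are ${\bf v}_0=(p,q)$ only. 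So the Newton boundary consists of the single simplex of Case~(b) with $(u_c,v_c)=(p,q)$, which explains $M=\gcd(2\kappa-q,p+1)$. To this we add the Case~(d) factor when $q=0$, the trivial Case~(e) contribution, and the Case~(f) factor. I would first double-check that Lemma~\ref{lemma43z} and Lemma~\ref{lemma43b} apply verbatim when $N=0$, i.e.\ when $\Gamma(h)$ is a single point.

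In each of the resulting cases I multiply by $(1-t)$ and use $1-t^n=\prod_{k\mid n}\Phi_k$. The free rank $a_1$ is already Theorem~\ref{thm2}; for instance, in case (1) it is $\ell+\varepsilon_2$, with $\varepsilon_2=1$ iff $s$ is even. For the torsion I must compute $a_k$ for all prime powers $k\ne1$.

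The claim to establish is that for odd primes and for $k=2^e$ with $e\ge2$, every occurrence in a numerator is cancelled by one in a denominator. The mechanism is that every exponent appearing is of the form $n$ or $2n$, paired with opposite signs and equal multiplicities. For example, $(1-t^{2ps/\ell})^\ell/(1-t^{ps/\ell})^\ell$ contributes $\Phi_k$ exactly when $k\mid 2ps/\ell$ but $k\nmid ps/\ell$. This forces $k$ to be a power of $2$ with $\nu_2(k)=\nu_2(ps/\ell)+1$, and such $k$ must then be matched against $(1-t^{2p})^{-1}$, $(1-t^{2s})$, or $(1-t^2)^{-1}$. Only $\Phi_2$ should survive. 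The torsion is then $\Z_2^{a_2-\dots}$ of the shape stated, with $a_2$ computed as a signed count of exponents divisible by $2$ but not by $4$. The same cancellation shows that all $d_j$ for $j>a_2$ are trivial. This yields $\Z_2^{\ell+q-1}$, $\Z\oplus\Z_2^{\ell+q-2}$, and so on, in case (1), and $\Z_2^{M-1}$ or $\Z_2^{M-2}$ in case (2) when $M'=M$.

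The main obstacle is this $2$-adic bookkeeping, in particular showing that $\Phi_{2^e}$ with $e\ge2$ cancels completely. It depends on the interplay between $\nu_2(p)$, $\nu_2(s)$, $\nu_2(\ell)$ and $q\in\{0,1\}$, and between $\nu_2(2\kappa u_c+v_c)$ and $M'/M$ in case (2). I would organize it by fixing $e=\nu_2(k)$ and tabulating, for each factor, whether $2^e$ divides its exponent. Lemma~\ref{lemma041} and Remark~\ref{rem410} control the parities, for example $M'=M$ whenever $q$ is odd. The case distinctions in the theorem should fall out directly from this table.
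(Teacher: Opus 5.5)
Your route is the paper's: build $\det(I-tT)$ from Lemmas~\ref{lemma43a}, \ref{lemma43b}, \ref{lemma44}, \ref{lemma43} and $(1-t^2)^{-1}$, then apply Orlik's conjecture via Hertling--Mase. However, both tools you set up for the second step are wrong, and following them gives wrong groups.

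\textbf{Orlik's recipe is misstated.} The elementary divisors $\alpha_j$ with $j\le a_1$ contain $\Phi_1$, so they produce the free summands and no torsion. The correct statement is
\[
H_2(K_f)\cong\Z^{a_1}\oplus\bigoplus_{j>a_1}\Z_{d_j},\qquad d_j=\prod_{k>1,\ a_k\ge j}\Phi_k(1),
\]
not a sum over $j\ge1$. For $f=x^2+y^2z+z^5+w^4$ one has $\det(I-tT)=\Phi_1^2\Phi_4^2\Phi_5\Phi_{20}$. The true answer is $\Z^2$, but your formula gives $\Z^2\oplus\Z_{10}\oplus\Z_2$. This misstatement is why you think odd prime-power factors must cancel. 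They do not: $\Phi_5$ survives here, and $\Phi_3$ survives for $x^2+y^2z+z^3w+w^6$. In the free cases, all that must be shown is $a_k\le a_1$ for every prime power $k>1$, which is exactly the paper's argument. Also, the free rank in (1) is $\ell+\varepsilon_2$ only when $(s-q)/\ell$ is even; otherwise it is $\varepsilon_2$.

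\textbf{The cancellation mechanism is false in case (1).} You claim that $\Phi_{2^e}$ with $e\ge2$ cancels and only $\Phi_2$ survives. When $(s-q)/\ell$ is odd, the Case~(a) factor contributes $\Phi_{2^e}^{\ell}$ with $e=\nu_2(2ps/\ell)$. It is cancelled only by one copy from $(1-t^{2p})^{-1}$ when $q=0$, since then $s/\ell$ is odd and $\nu_2(2p)=e$. For $s$ odd, the $\Phi_2$ from $(1-t^{2s})/(1-t^s)$ and the $\Phi_2^{-1}$ from $(1-t)/(1-t^2)$ cancel each other.

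If $s$ is even, then always $e\ge2$. For example, $x^2+y^2z+z^4+w^4$ gives $\det(I-tT)=\Phi_1\Phi_4\Phi_8^3$. Here $a_2=0$, yet $H_2(K_f)\cong\Z\oplus\Z_2^2$, and the torsion comes from $\Phi_8$. The same happens in the fourth case when $p/\ell$ is even. For example, $x^2+y^2z+z^6+w^9$ gives $\Phi_4^2\Phi_6\Phi_{12}^2\Phi_{18}\Phi_{36}^3$, and $H_2(K_f)\cong\Z_2^2$ comes from $\Phi_4$. So counting $a_2$ gives the wrong torsion in these cases. Incidentally, $a_2$ is the signed number of even exponents, not of exponents $\equiv 2\bmod 4$.

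\textbf{The missing step is the paper's.} First identify the unique prime power $2^e$ whose multiplicity exceeds $a_1$: $e=\nu_2(2ps/\ell)$ in (1) and $e=1$ in (2). Then check that no other prime-power factor exceeds $a_1$. The torsion is then $\Z_2^{a_{2^e}-a_1}$. In the torsion cases of (2) your picture is correct, because $(2\kappa p+1)/M$ is odd, and so is $2\kappa p/M$ when $M'=M$.
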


\begin{remark}
Smale proved in~\cite{Sma62} that 
if a simply connected, closed $5$-dimensional manifold $K$ has vanishing second Stiefel-Whitney class,
then the exponent of each torsion factor of $H_2(K)$ is even.
Therefore, the exponent of the $\Z_2$-torsion of $H_2(K_f)$ is always even.
This can be verified directly from the statement of Theorem~\ref{thm51}.
For example, in the case of assertion~(1) with $(s-q)/\ell$ odd and $s$ even,
the oddness of $(s-q)/\ell$ implies that $\ell$ is odd when $q=1$, and even when $q=0$. 
Therefore, $\ell+q-2$ is even in either case.
\end{remark}

\begin{example}
Consider an isolated cDV singularity of type $cD_n$ given by
\[
   f(x,y,z,w)=x^2+y^2z+z^p+w^s\quad \text{with $s$ odd and $\gcd(p,s)=1$.}
\]
This is the case of assertion~(1) with $(s-q)/\ell$ and $s$ being odd, and $q=0$.
Since the exponent $\ell+q-1$ of $\Z_2$ is $0$, $H_2(K_f)$ is trivial.
Hence, by~\cite{Sma62}, $K_f$ is diffeomorphic to $S^5$.
For each $m>0$, an isolated cDV singularity $(f,O)$ of type $cD_n$ whose link $K_f$ is diffeomorphic to 
the connected sum of $m$ copies of $S^2\times S^3$ is obtained, for example, 
from the first and second cases in assertion~(1) of Theorem~\ref{thm51}. 
\end{example}

To prove Theorem~\ref{thm51}, we will use the following lemma.

\begin{lemma}[\cite{Orl72}, {\cite[Lemma 13.3]{HM22}}]
Let $f(x_1,x_2,\ldots,x_k)$ be a weighted homogeneous polynomial with $f(O)=0$.
Suppose that $(f,O)$ is an isolated singularity.
Let $\alpha_1(t), \alpha_2(t), \ldots, \alpha_\tau(t)$ be the elementary divisors of the characteristic polynomial $\det(I-tT)=(1-t)\zeta(t)$ of $(f,O)$, that is, 
they are products of cyclotomic polynomials with multiplicities $1$ and 
satisfying 
\[
   \det(I-tT)=\alpha_1(t)\alpha_2(t)\cdots \alpha_\tau(t)
\]
and
\[
   \alpha_\tau(t)\hd \alpha_{\tau-1}(t) \hd \cdots \hd \alpha_2(t) \hd \alpha_1(t).
\]
If Orlik's conjecture holds true for $(f,O)$, then
\[
   H_{k-2}(K_f)\cong \Z^m\oplus \bigoplus_{j=m+1}^\tau\Z_{\alpha_j(1)},
\]
where $m$ is the multiplicity of $(1-t)$ in $\det(I-tT)$.
\end{lemma}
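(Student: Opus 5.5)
The plan is to reduce the computation of $H_{k-2}(K_f)$ to a computation of $\text{\rm coker}(I-T)$ on $H_{k-1}(F_f)$, and then to use Orlik's conjecture to put the $\Z[t]$-module $H_{k-1}(F_f)$ into a normal form from which the cokernel can be read off factor by factor.

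First, exactly as in the proofs of Theorems~\ref{thm1} and~\ref{thm2}, the exact sequence of the Milnor fibration gives $H_{k-2}(K_f)\cong\text{\rm coker}(I-T)$. Here $T$ acts on the free abelian group $H_{k-1}(F_f)\cong\Z^{\mu}$. We regard $H_{k-1}(F_f)$ as a $\Z[t]$-module, with $t$ acting by $T$. Then
\[
\text{\rm coker}(I-T)\cong H_{k-1}(F_f)/(1-t)H_{k-1}(F_f)\cong H_{k-1}(F_f)\otimes_{\Z[t]}\Z[t]/(1-t).
\]
In this form the answer depends only on the isomorphism class of the $\Z[t]$-module.

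Second, I invoke Orlik's conjecture in the form used in~\cite{HM22}. It asserts that, for the elementary divisors $\alpha_1(t),\ldots,\alpha_\tau(t)$ of $\det(I-tT)$, there is an isomorphism of $\Z[t]$-modules
\[
H_{k-1}(F_f)\cong\bigoplus_{j=1}^{\tau}\Z[t]/(\alpha_j(t)).
\]
Over $\Q$ this holds automatically, because $T$ is semisimple in the weighted homogeneous case ($T^d=I$). The conjecture is exactly the statement that it also holds over $\Z$. Since tensor product commutes with direct sums, we get
\[
\text{\rm coker}(I-T)\cong\bigoplus_{j}\Z[t]/(\alpha_j(t),\,1-t)\cong\bigoplus_j\Z/\alpha_j(1)\Z .
\]
The last isomorphism comes from evaluating at $t=1$: substituting $t=1$ identifies $\Z[t]/(1-t)$ with $\Z$, and under this identification $\alpha_j(t)$ maps to $\alpha_j(1)$.

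Third, I sort the summands. Each $\alpha_j$ is a product of distinct cyclotomic polynomials, so $(1-t)$ divides $\alpha_j$ with multiplicity at most one. By the divisibility chain $\alpha_\tau\hd\cdots\hd\alpha_1$, the indices $j$ with $(1-t)\hd\alpha_j$ form an initial segment $\{1,\ldots,m'\}$. Counting multiplicities in $\det(I-tT)=\prod_j\alpha_j(t)$ shows that $m'=m$, the multiplicity of $(1-t)$. For $j\le m$ we have $\alpha_j(1)=0$, so the summand is $\Z$. For $j>m$ we have $\alpha_j(1)\ne0$, giving $\Z_{\alpha_j(1)}$; when $\alpha_j(1)=\pm1$ this summand is trivial, which is consistent with the stated formula. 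This yields $H_{k-2}(K_f)\cong\Z^m\oplus\bigoplus_{j>m}\Z_{\alpha_j(1)}$.

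The main obstacle is the second step: making sure the precise formulation of Orlik's conjecture gives a $\Z[t]$-module isomorphism with the cyclic decomposition, and not merely an isomorphism of abelian groups for $\text{\rm coker}(I-T)$. If the conjecture is phrased directly as a statement about $H_{k-2}(K_f)$, then the lemma is essentially a restatement, and the work reduces to the bookkeeping in the third step.
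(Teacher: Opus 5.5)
Your argument is correct, and it is the standard derivation behind this lemma. Granting Orlik's conjecture in its $\Z[t]$-module form $H_{k-1}(F_f)\cong\bigoplus_j\Z[t]/(\alpha_j(t))$ (the formulation of \cite{HM22}), you compute $\text{\rm coker}(I-T)\cong\bigoplus_j\Z[t]/(\alpha_j(t),1-t)\cong\bigoplus_j\Z_{\alpha_j(1)}$ and sort out the factors of $1-t$ via the initial segment; the paper gives no proof of its own and simply imports the lemma from \cite{Orl72} and \cite[Lemma 13.3]{HM22}, where the argument is this same cokernel computation.
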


The elementary divisors are obtained as follows.
Let $\det(I-tT)=\prod_{d\in J_1} (\Phi_d(t))^{m_1(d)}$ 
be the factorization of $\det(I-tT)$ with cyclotomic polynomials $\Phi_d(t)$,
where 
$\Phi_d(t)$ is the $d$-th cyclotomic polynomial, 
$m_1(d)\in\N$, 
and $J_1\subset \N$ is the set of indices $d$ such that $\Phi_d(t)$ appears in the factorization.
Then, set
\[
   \alpha_1(t)=\prod_{d\in J_1} \Phi_d(t).
\]
Next, consider the factorization of $\det(I-tT) (\alpha_1(t))^{-1}$,
which is given as $\det(I-tT) (\alpha_1(t))^{-1}=\prod_{d\in J_2} (\Phi_d(t))^{m_2(d)}$,
where
$m_2(d)\in\N$, and
$J_2\subset J_1$ is again the set of indices $d$ such that $\Phi_d(t)$ appears in this factorization.
Then, set
\[
   \alpha_2(t)=\prod_{d\in J_2} \Phi_d(t).
\]
We then consider  the factorization of $\det(I-tT) (\alpha_1(t))^{-1}(\alpha_2(t))^{-1}$ 
and set $\alpha_3(t)$ in the same manner.
We continue this process until no factor remains.
Thus,
we obtain $\det(I-tT) (\alpha_1(t))^{-1}(\alpha_2(t))^{-1}\cdots (\alpha_\tau(t))^{-1}=1$, and 
the first condition of the elementary divisors is satisfied. The second condition is obviously satisfied from the construction of the elementary divisors.

The value $\Phi_d(1)$ is 
\[
   \Phi_d(1)=\begin{cases} \alpha & \text{if $d=\alpha^\beta$ for some $\alpha, \beta\in\N$ with $\alpha$ prime,} \\
   1 & \text{otherwise.}
\end{cases}
\]
Therefore, only factors of the form $\Phi_{\alpha^\beta}(t)$ contribute to the torsion subgroup, 
giving a $\Z_\alpha$ summand. 
Note that, as stated in the above lemma, the polynomials $\alpha_i(t)$ for $i=1,\ldots,m$ do not contribute to the torsion subgroup.

For each matrix $I-tT$, there exist two matrices $U(t)$ and $V(t)$ such that $U(t)(I-tT)V(t)$ is a diagonal matrix with diagonal elements 
\[
  \begin{bmatrix} 
  \displaystyle 
  \prod_{i=1}^\tau \alpha_i(t), & 
  \displaystyle 
  \prod_{i=2}^\tau \alpha_i(t), & \cdots, & 
  \displaystyle 
  \prod_{i=\tau-1}^\tau \alpha_i(t), & \alpha_\tau(t), & 1, & \cdots, & 1 \end{bmatrix}.
\]
In the following proof, we write down this matrix using cyclotomic polynomials and determine the torsion subgroup for each case.

\begin{proof}[Proof of Theorem~\ref{thm51}]
We first prove assertion~(1).
Suppose that $a=0$.
First, we consider the case where $(s-q)/\ell$ is even.
If $q=1$, then $\Gamma(f)$ consists of the faces of the simplex drawn on the right in Figure~\ref{fig2}
with ${\bf v}_i=(p,1)$ and ${\bf v}_{i-1}=(0,s)$, together with the faces in Figure~\ref{fig4}
with ${\bf v}_0\cap X_4={\bf v}_{i-1}=(0,s)$.
Therefore, the characteristic polynomial $\det(I-tT)=(1-t)\zeta_f(t)$ of $(f,O)$ is obtained by applying
Lemma~\ref{lemma43a}, with $(v_{i-1}-v_i)/\ell_i=(s-q)/\ell$ even, and Lemma~\ref{lemma43}, with $v_0=s$, as
\begin{equation}\label{eq61}
   \det(I-tT)=
\begin{cases}
\displaystyle 
(1-t)(1-t^{ps/\ell})^{\ell}(1-t^s)\frac{1}{1-t^2} & \text{if $s$ is even,} 
\vspace{2mm}
\\
\displaystyle 
(1-t)(1-t^{ps/\ell})^{\ell}\frac{1-t^{2s}}{1-t^s}\frac{1}{1-t^2} & \text{if $s$ is odd},
\end{cases}
\end{equation}
where the factor $(1-t^2)^{-1}$ corresponds to the vertex ${\bf u}_1$ as in Section~\ref{sec4-8}. 
If $s$ is even, then the multiplicity of $\Phi_1(t)=1-t$ is $\ell+1$ and that of $\Phi_d(t)$ is at most $\ell+1$ for any $d>1$.
Therefore, $\text{\rm rank}\,H_2(K_f)=\ell+1$ and $H_2(K_f)$ has no torsion subgroup.
If $s$ is odd, then the multiplicity of $\Phi_1(t)$ is $\ell$.
The right-hand side of~\eqref{eq61} is expressed in terms of cyclotomic polynomials as
\begin{equation}\label{eqc-99}
\Big(
\prod_{\substack{d_1>1 \\ d_1\hd (ps/\ell) }}(\Phi_{d_1}(t))^\ell
\Big)
\,
\Big(
\prod_{\substack{d_2>1 \\ d_2\hd 2s\\ d_2\not\hspace{0.7mm}|\hspace{0.7mm} s}}\Phi_{d_2}(t)
\Big)
\,
\Big(\Phi_2(t)\Big)^{-1},
\end{equation}
where the last factor of~\eqref{eqc-99} is obtained from $(1-t)(1-t^2)^{-1}$.
Since $s$ is odd, the middle factor of~\eqref{eqc-99} contains $\Phi_2(t)$, and this cancels 
the last factor $(\Phi_2(t))^{-1}$ of~\eqref{eqc-99}.
The remaining $d_2$'s are even and not powers of two, and therefore satisfy $\Phi_{d_2}(1)=1$. 
Hence, the multiplicity of $\Phi_{d_1}(t)$ with $\Phi_{d_1}(1)>1$ is at most $\ell$ for any $d_1>1$,
which implies that $\text{\rm rank\,}H_2(K_f)=\ell$ and $H_2(K_f)$ has no torsion subgroup.

If $q=0$ then $\Gamma(f)$ consists of the faces of the simplex drawn on the right in Figure~\ref{fig2}
with ${\bf v}_i=(p,0)$ and ${\bf v}_{i-1}=(0,s)$, together with the faces in Figure~\ref{fig4}
with ${\bf v}_0\cap X_4={\bf v}_{i-1}=(0,s)$
and those in Figure~\ref{fig3} with ${\bf v}_N\cap X_3={\bf v}_i=(p,0)$.
By Lemma~\ref{lemma44} with $u_N=p$, 
it follows that $\det(I-tT)$ is obtained from the right-hand side of~\eqref{eq61} by multiplying 
the factor 
\begin{equation}\label{eq62}
   \frac{1-t^p}{1-t^{2p}}.
\end{equation}
The rest of the argument is exactly the same and 
we conclude that the rank of $H_2(K_f)$ is $\ell+1$ if $s$ is even and $\ell$ if $s$ is odd, 
and $H_2(K_f)$ has no torsion subgroup. 
This proves the first two cases in assertion~(1).

Next, we consider the case where $(s-q)/\ell$ is odd and $s$ is even.
Suppose that $q=1$.
Then, $\det(I-tT)$ is obtained by applying Lemma~\ref{lemma43a}, with $(v_{i-1}-v_i)/\ell_i=(s-q)/\ell$ odd, 
and Lemma~\ref{lemma43}, with $v_0=s$ even, as
\begin{equation}\label{eq63}
   \det(I-tT)
=(1-t)\frac{(1-t^{2ps/\ell})^\ell}{(1-t^{ps/\ell})^{\ell}}(1-t^s)\frac{1}{1-t^2}.
\end{equation}
Since the multiplicity of the factor $\Phi_1(t)$ is $1$, we have $\text{\rm rank\,}H_2(K_f)=1$.

The right-hand side of~\eqref{eq63} is expressed in terms of cyclotomic polynomials as
\begin{equation}\label{eqc-1}
\Phi_1(t)
\;
\Big(
\prod_{\substack{d_1>1 \\ d_1\hd (2ps/\ell) \\ d_1\not\hspace{0.7mm}|\hspace{0.7mm}(ps/\ell)}}(\Phi_{d_1}(t))^\ell
\Big)
\,
\Big(
\prod_{\substack{d_2>1 \\ d_2\hd s\\ d_2\not\hspace{0.7mm}|\hspace{0.7mm} 2}}\Phi_{d_2}(t)
\Big),
\end{equation}
where the last factor of~\eqref{eqc-1} is obtained from $(1-t^s)(1-t^2)^{-1}$.
Since $\ell=\gcd(p-r, s-q)=\gcd(p,s-1)$, it follows that $\ell\hd p$.
Recall that $\nu_2(\alpha)$ denotes the $2$-adic valuation of an integer $\alpha$, that is, the integer $\beta$ such that 
$\alpha=2^\beta\gamma$ with $\gamma$ odd.
The $d_1$'s in the middle factor of~\eqref{eqc-1} satisfy
\[
   \nu_2(d_1)=\nu_2(2ps/\ell)
   =\nu_2(p/\ell)+\nu_2(s)+1,
\]
while the $d_2$'s in the last factor of~\eqref{eqc-1} satisfy $\nu_2(d_2)\leq \nu_2(s)$.
Hence $\nu_2(d_1)>\nu_2(d_2)$, and therefore there are no $d_1$ and $d_2$ with $d_1=d_2$.
This implies that 
the multiplicity of each $\Phi_{d_1}(t)$ in $\det(I-tT)$ is $\ell$ and that of each $\Phi_{d_2}(t)$ is $1$.
Thus, the diagonal elements of $U(t)(I-tT)V(t)$ are
\begin{equation}\label{eqd-1}
\begin{bmatrix}
\Phi_1(t)\Big(\prod \Phi_{d_1}(t)\Big)\Big(\prod \Phi_{d_2}(t)\Big), & 
\prod \Phi_{d_1}(t), & \cdots, &
\prod \Phi_{d_1}(t), & 1, & \cdots, & 1
\end{bmatrix},
\end{equation}
where the factor $\prod \Phi_{d_1}(t)$ appears in the first $\ell$ entries.
There is a unique power of two $d_1$, which satisfies $\Phi_{d_1}(1)=2$.
The other $d_1$'s are not prime since they are even, and hence satisfy $\Phi_{d_1}(1)=1$.
Therefore, substituting $t=1$ into~\eqref{eqd-1}, all entries from the second to the $\ell$-th are equal to $2$.
The first entry does not contribute since $\text{\rm rank\,}H_2(K_f)=1$,
and the $2$'s from the second to the $\ell$-th entries give rise to the torsion subgroup $\Z_2^{\ell-1}$.
Therefore, we have $H_2(K_f)\cong \Z\oplus\Z_2^{\ell-1}$.

Suppose that $q=0$. 
By applying Lemma~\ref{lemma44} with $u_N=p$, it follows that 
$\det(I-tT)$ is obtained from the right-hand side of~\eqref{eq63} by multiplying 
the factor~\eqref{eq62} as
\begin{equation}\label{eq67}
\det(I-tT)
=(1-t)\frac{(1-t^{2ps/\ell})^\ell}{(1-t^{ps/\ell})^{\ell}}(1-t^s)\frac{1}{1-t^2}\frac{1-t^p}{1-t^{2p}}.
\end{equation}
Since the multiplicity of the factor $\Phi_1(t)$ is $1$, we have $\text{\rm rank\,}H_2(K_f)=1$.

The right-hand side of~\eqref{eq67} is expressed as
\begin{equation}\label{eqc-2}
\Phi_1(t)
\;
\Big(
\prod_{\substack{d_1>1 \\ d_1\hd (2ps/\ell) \\ d_1\not\hspace{0.7mm}|\hspace{0.7mm}(ps/\ell)}}(\Phi_{d_1}(t))^\ell
\Big)
\,
\Big(
\prod_{\substack{d_2>1 \\ d_2\hd s\\ d_2\not\hspace{0.7mm}|\hspace{0.7mm} 2}}\Phi_{d_2}(t)
\Big)
\,
\Big(
\prod_{\substack{d_3>1 \\ d_3\hd 2p\\ d_3\not\hspace{0.7mm}|\hspace{0.7mm} p}}\Phi_{d_3}(t)\Big)^{-1}.
\end{equation}
By the same argument as in the case $q=1$, we can conclude that there are no $d_1$ and $d_2$ with $d_1=d_2$.
Therefore, the multiplicity of each $\Phi_{d_2}(t)$ in $\det(I-tT)$ is at most $1$.

Consider the multiplicity of $\Phi_{d_1}(t)$.
Since $\ell=\gcd(p,s)$, it follows that $\ell \hd s$.
Then, the condition $(s-q)/\ell=s/\ell$ being odd implies that $\nu_2(s/\ell)=0$.
The $d_1$'s and $d_3$'s in~\eqref{eqc-2} satisfy $\nu_2(d_1)=\nu_2(2ps/\ell)=\nu_2(2p)$ and $\nu_2(d_3)=\nu_2(2p)$,
respectively.
Hence, there are $d_1$ and $d_3$ with $d_1=d_3$.
The diagonal elements of $U(t)(I-tT)V(t)$ are
\begin{equation}\label{eqd-2}
\begin{split}
\Big[\displaystyle 
\Phi_1(t)
\Big(\prod \Phi_{d_1}(t)\Big)
&
\Big(\prod \Phi_{d_2}(t)\Big) 
\Big(\prod \Phi_{d_3}(t)\Big)^{-1}, \;\, 
\prod \Phi_{d_1}(t),
\;\, \\
& \cdots, \;\, 
\prod \Phi_{d_1}(t),
\;\,
\prod_{d_1\not\in\{d_3\}} \Phi_{d_1}(t), \;\, 1, \;\, \cdots, \;\, 1
\Big],
\end{split}
\end{equation}
where 
the factor $\prod_{d_1\not\in\{d_3\}} \Phi_{d_1}(t)$ appears in the $\ell$-th entry.
There is a unique power of two $d_1$, which satisfies $\Phi_{d_1}(1)=2$.
The other $d_1$'s satisfy $\Phi_{d_1}(1)=1$ since $d_1$ is even.
Since any $d_1$ with $\Phi_{d_1}(1)>1$ is a power of two and the same power of two appears in the $d_3$'s, there is no factor in the $\ell$-th entry $\prod_{d_1\not\in\{d_3\}} \Phi_{d_1}(t)$ of~\eqref{eqd-2} with $\Phi_{d_1}(1)=2$.
Thus, we have $H_2(K_f)\cong \Z\oplus \Z_2^{\ell-2}$.
This proves the third case in assertion~(1).

Finally, consider the case where $(s-q)/\ell$ and $s$ are odd.
Suppose that $q=1$.
Then, $\det(I-tT)$ is obtained by applying Lemma~\ref{lemma43a}, with $(v_{i-1}-v_i)/\ell_i=(s-q)/\ell$ odd, 
and Lemma~\ref{lemma43}, with $v_0=s$ odd, as
\begin{equation}\label{eq64}
\det(I-tT)
=(1-t)\frac{(1-t^{2ps/\ell})^\ell}{(1-t^{ps/\ell})^{\ell}}\frac{1-t^{2s}}{1-t^s}\frac{1}{1-t^2}.
\end{equation}
Since the multiplicity of the factor $\Phi_1(t)$ is $0$, $H_2(K_f)$ has no free part.

The right-hand side of~\eqref{eq64} is expressed as
\begin{equation}\label{eqc-3}
\Big(
\prod_{\substack{d_1>1 \\ d_1\hd (2ps/\ell) \\ d_1\not\hspace{0.7mm}|\hspace{0.7mm}(ps/\ell)}}(\Phi_{d_1}(t))^\ell
\Big)\,
\Big(
\prod_{\substack{d_2>1 \\ d_2\hd 2s\\ d_2\not\hspace{0.7mm}|\hspace{0.7mm} s}}\Phi_{d_2}(t)
\Big)
\,
\Big(\Phi_2(t)\Big)^{-1},
\end{equation}
where the last factor of~\eqref{eqc-3} is obtained from $(1-t)(1-t^2)^{-1}$.
Since $s$ is odd,  the middle factor of~\eqref{eqc-3} contains $\Phi_2(t)$, and this cancels 
the last factor $(\Phi_2(t))^{-1}$ of~\eqref{eqc-3}.
The remaining $d_2$'s satisfy $\nu_2(d_2)=1$. 
Since these $d_2$'s are even and not powers of two, if there exist $d_1$ and $d_2$ with $d_1=d_2$, then they satisfy $\Phi_{d_1}(1)=\Phi_{d_2}(1)=1$.
Therefore, they will not contribute to the calculation of the torsion subgroup of $H_2(K_f)$.
To avoid cumbersome notation, we present the diagonal elements of $U(t)(I-tT)V(t)$
while omitting the factors $\Phi_d(t)$ with $\Phi_d(1)=1$. This omission does not affect the calculation of the torsion subgroup.
With these factors omitted, the diagonal elements are expressed as
\begin{equation}\label{eqd-3}
\begin{split}
\Big[\displaystyle 
\Big(\prod\nolimits^* \Phi_{d_1}(t)\Big)
\Big(\prod\nolimits^* \Phi_{d_2}(t)\Big) 
&
\Big(\Phi_2(t)\Big)^{-1}, \;\, 
\prod\nolimits^* \Phi_{d_1}(t),
\;\, \\
& \cdots, \;\, 
\prod\nolimits^* \Phi_{d_1}(t),
\;\,
\prod\nolimits^*_{d_1\not\in\{d_2\}\setminus\{2\}} \Phi_{d_1}(t), \;\, 1, \;\, \cdots, \;\, 1
\Big],
\end{split}
\end{equation}
where the factor $\prod\nolimits^*_{d_1\not\in\{d_2\}\setminus\{2\}} \Phi_{d_1}(t)$ appears in the $\ell$-th entry.
The symbol $\prod\nolimits^*$ denotes the product of $\Phi_d(t)$'s satisfying $\Phi_d(1)>1$,
and is defined to be $1$ if there are no such factors.
Since $d_1=d_2$ implies $\Phi_{d_1}(1)=\Phi_{d_2}(1)=1$ unless $d_2=2$, all factors in $\prod\nolimits^* \Phi_{d_1}(t)$ with $\Phi_{d_1}(1)=2$ remain in the $\ell$-th entry $\prod\nolimits^*_{d_1\not\in\{d_2\}\setminus\{2\}} \Phi_{d_1}(t)$.
Thus, substituting $t=1$ into~\eqref{eqd-3}, the first $\ell$ entries are equal to $2$ and the remaining entries are $1$, which implies $H_2(K_f)\cong \Z_2^\ell$.

Suppose that $q=0$.
By applying Lemma~\ref{lemma44} with $u_N=p$, it follows that $\det(I-tT)$ is obtained from the right-hand side of~\eqref{eq64} by multiplying the factor~\eqref{eq62} as
\begin{equation}\label{eq68}
\det(I-tT)
=(1-t)\frac{(1-t^{2ps/\ell})^\ell}{(1-t^{ps/\ell})^{\ell}}\frac{1-t^{2s}}{1-t^s}\frac{1}{1-t^2}\frac{1-t^p}{1-t^{2p}}.
\end{equation}
Since the multiplicity of the factor $\Phi_1(t)$ is $0$, $H_2(K_f)$ has no free part.

The right-hand side of~\eqref{eq68} is expressed as
\[
\Big(
\prod_{\substack{d_1>1 \\ d_1\hd (2ps/\ell) \\ d_1\not\hspace{0.7mm}|\hspace{0.7mm}(ps/\ell)}}(\Phi_{d_1}(t))^\ell
\Big)
\,
\Big(
\prod_{\substack{d_2>1 \\ d_2\hd 2s\\ d_2\not\hspace{0.7mm}|\hspace{0.7mm} s}}\Phi_{d_2}(t)
\Big)
\,
\Big(\Phi_2(t)\Big)^{-1}
\,
\Big(\prod_{\substack{d_3>1 \\ d_3\hd 2p\\ d_3\not\hspace{0.7mm}|\hspace{0.7mm} p}}\Phi_{d_3}(t)\Big)^{-1}.
\]
As in the case of $q=1$, the diagonal elements of $U(t)(I-tT)V(t)$, after omitting the factors $\Phi_d(t)$ with $\Phi_d(1)=1$, are expressed as
\begin{equation}\label{eqd-4}
\begin{split}
\Big[\displaystyle 
\Big(\prod\nolimits^* \Phi_{d_1}(t)\Big)
\Big(\prod\nolimits^* \Phi_{d_2}(t)\Big) 
&
\Big(\Phi_2(t)\Big)^{-1}
\Big(\prod\nolimits^* \Phi_{d_3}(t)\Big)^{-1}, \;\, 
\prod\nolimits^* \Phi_{d_1}(t),
\;\, \\
& \cdots, \;\, 
\prod\nolimits^* \Phi_{d_1}(t),
\;\,
\prod\nolimits^*_{d_1\not\in\{d_3\}} \Phi_{d_1}(t), \;\, 1, \;\, \cdots, \;\, 1
\Big],
\end{split}
\end{equation}
where the factor $\prod\nolimits^*_{d_1\not\in\{d_3\}} \Phi_{d_1}(t)$ appears in the $\ell$-th entry.
In the above presentation, the information of $d_2$ is omitted, since it does not affect the calculation of the torsion subgroup, as shown in~\eqref{eqd-3} and the subsequent discussion.
Since $s$ is odd, $\nu_2(s)=\nu_2(\ell)=0$. Therefore, if $d_1$ and $d_2$ are prime powers, then $d_1=d_2$.
This implies that the factor in the $\ell$-th entry is $\prod\nolimits^*_{d_1\not\in\{d_3\}} \Phi_{d_1}(1)=1$. 
Therefore, substituting $t=1$ into~\eqref{eqd-4}, the first $\ell-1$ entries are equal to $2$ and the remaining entries are $1$.
Hence $H_2(K_f)\cong \Z_2^{\ell-1}$.
This completes the proof of assertion~(1).

Next, we prove assertion~(2), that is, $(f,O)$ is given in the form~\eqref{eq52} for $\kappa\geq 3$, $p+q\geq 4$, $q\in \{0,1\}$, and $a, b\ne 0$.
If $q=1$, then $\Gamma(f)$ consists of the faces of the simplex in Figure~\ref{fig7}
with ${\bf v}_c=(p,1)$. 
We apply Lemma~\ref{lemma43b} with $(u_c, v_c)=(p,1)$.
Since $v_c$ is odd, we have $M'=M$ by Remark~\ref{rem410}.
Therefore,
\begin{equation}\label{eq65}
   \det(I-tT)=(1-t)\frac{(1-t^{2(2\kappa p+1)/M})^M}{(1-t^{(2\kappa p+1)/M})^M}\frac{1}{1-t^2}.
\end{equation}
Since the multiplicity of the factor $\Phi_1(t)$ is $0$, $H_2(K_f)$ has no free part.

The right-hand side is expressed as
\begin{equation}\label{eqc-5}
\Big(
\prod_{\substack{d_1>1 \\ d_1\hd (2(2\kappa p+1)/M) \\ d_1\not\hspace{0.7mm}|\hspace{0.7mm}((2\kappa p+1)/M)}}(\Phi_{d_1}(t))^M
\Big)
\,
\Big(\Phi_2(t)\Big)^{-1},
\end{equation}
where the second factor of~\eqref{eqc-5} is obtained from $(1-t)(1-t^2)^{-1}$.
Since $2\kappa p+1$ is odd, the first factor of~\eqref{eqc-5} contains $\Phi_2(t)$,
and this cancels $(\Phi_2(t))^{-1}$.
Thus, the diagonal elements of $U(t)(I-tT)V(t)$ are
\begin{equation}\label{eqd-5}
\Big[
\Big(\prod \Phi_{d_1}(t)\Big)\Big(\Phi_{2}(t)\Big)^{-1}, \;\,
\prod \Phi_{d_1}(t), \;\, \cdots, \;\,
\prod \Phi_{d_1}(t), \;\,
{\displaystyle \prod_{d_1\ne 2}}  \Phi_{d_1}(t), \;\, 1, \;\, \cdots, \;\, 1
\Big],
\end{equation}
where the factor $\prod_{d_1\ne 2} \Phi_{d_1}(t)$ appears in the $M$-th entry.
Since $2\kappa p+1$ is odd, $\Phi_{d_1}(1)=1$ for any $d_1\ne 2$.
Therefore, substituting $t=1$ into~\eqref{eqd-5}, the first $M-1$ entries are equal to $2$ and the remaining entries are $1$.
Hence $H_2(K_f)\cong \Z_2^{M-1}$.
This proves the first case in assertion~(2).

Suppose that $q=0$. 
Then, $\Gamma(f)$ consists of the faces of the simplex in Figure~\ref{fig7}
with ${\bf v}_c=(p,0)$ and those in Figure~\ref{fig3} with ${\bf v}_N\cap X_3={\bf v}_c=(p,0)$. 
Applying Lemma~\ref{lemma43b} with $(u_c, v_c)=(p,0)$
and Lemma~\ref{lemma44} with $(u_N, v_N)=(p,0)$,
we obtain 
\begin{equation}\label{eq66}
   \det(I-tT)=
   \begin{cases} 
   \displaystyle 
   (1-t)\frac{(1-t^{4\kappa p/M})^M}{(1-t^{2\kappa p/M})^M}\frac{1}{1-t^2}\frac{1-t^p}{1-t^{2p}}
   & \text{if $M'=M$,} 
   \vspace{2mm}
   \\
   \displaystyle 
   (1-t)(1-t^{2\kappa p/M})^M\frac{1}{1-t^2}\frac{1-t^p}{1-t^{2p}}
   & \text{if $M'=2M$}.
   \end{cases}
\end{equation}
If $M'=2M$, then, since the multiplicity of the factor $\Phi_1(t)$ is $M$
and that of $\Phi_d(t)$ is at most $M$ for any $d>1$,
it follows that $\text{\rm rank}\,H_2(K_f)=M$ and $H_2(K_f)$ has no torsion subgroup.
This proves the third case in assertion~(2).

Suppose that $M=M'$.
Since the multiplicity of the factor $\Phi_1(t)$ is $0$, $H_2(K_f)$ has no free part.
The right-hand side of~\eqref{eq66} in the case $M'=M$ is expressed as
\begin{equation}\label{eqc-6}
\Big(
\prod_{\substack{d_1>1 \\ d_1\hd 4\kappa p/M \\ d_1\not\hspace{0.7mm}|\hspace{0.7mm}2\kappa p/M}}(\Phi_{d_1}(t))^M
\Big)
\,
\Big(\Phi_2(t)\Big)^{-1}
\,
\Big(\prod_{\substack{d_2>1 \\ d_2\hd 2p\\ d_2\not\hspace{0.7mm}|\hspace{0.7mm} p}}\Phi_{d_2}(t)\Big)^{-1}.
\end{equation}
Since $M=\gcd(2\kappa, p+1)$ and $M'=\gcd(2\kappa, 2(p+1))$, it follows from $M=M'$ that $M\geq 2$ and $\nu_2(2\kappa)\leq \nu_2(p+1)$.
In particular, $p$ is odd.
The oddness of $p$ implies that the last factor of~\eqref{eqc-6} contains $(\Phi_2(t))^{-1}$.
The first factor of~\eqref{eqc-6} contains $(\Phi_2(t))^M$.
Since $M\geq 2$, the factors $(\Phi_2(t))^{-1}$ in the second and third factors of~\eqref{eqc-6} cancel two of the $M$ factors $(\Phi_2(t))^M$ in the first factor of~\eqref{eqc-6}.
Therefore, the diagonal elements of $U(t)(I-tT)V(t)$ are expressed as
\begin{equation}\label{eqd-6}
\begin{split}
\Big[\displaystyle 
\Big(\prod \Phi_{d_1}(t)\Big)
&
\Big(\Phi_2(t)\Big)^{-1}
\Big(\prod \Phi_{d_2}(t)\Big)^{-1}, \;\, 
\prod \Phi_{d_1}(t),
\;\, \\
& \cdots, \;\, 
\prod \Phi_{d_1}(t),
\;\,
\prod_{d_1\ne 2} \Phi_{d_1}(t), 
\;\,
\prod_{d_1\not\in\{d_2\}} \Phi_{d_1}(t), 
\;\, 1, \;\, \cdots, \;\, 1
\Big],
\end{split}
\end{equation}
where the factor $\prod_{d_1\not\in\{d_2\}} \Phi_{d_1}(t)$ appears in the $M$-th entry.
The inequality $\nu_2(p+1)\geq \nu_2(2\kappa)$ implies that $\nu_2(2\kappa)=\nu_2(M)$, and hence $\nu_2(2\kappa p/M)=\nu_2(p)$. 
Since $p$ is odd, $2\kappa p/M$ is also odd. 
Hence, $\Phi_{d_1}(1)=1$ for any $d_1\ne 2$.
This implies that both the $(M-1)$-st entry and the $M$-th entry of~\eqref{eqd-6} are equal to $1$. 
Therefore, substituting $t=1$ into~\eqref{eqd-6}, the first $M-2$ entries are equal to $2$ and the remaining entries are $1$.
Hence $H_2(K_f)\cong \Z_2^{M-2}$.
This proves the second case in assertion~(2).
\end{proof}

\makeatletter
\@setaddresses
\makeatother

\end{document}